\documentclass[reqno,12pt,letterpaper]{amsart}
\usepackage{geometry}
\usepackage{ulem}
\usepackage{enumerate}
\usepackage{amssymb,amsthm,amsmath}
\usepackage[colorlinks,breaklinks]{hyperref}
\usepackage{tikz}
\usepackage{booktabs} 

\theoremstyle{plain}
\newtheorem{prop}{Proposition}[section]
\newtheorem{lemma}[prop]{Lemma}
\newtheorem{theorem}[prop]{Theorem}
\newtheorem{cor}[prop]{Corollary}

\theoremstyle{remark}
\newtheorem{remark}[prop]{Remark}

\newcommand{\Aa}{\textbf{a}}
\newcommand{\Bb}{\textbf{b}}
\newcommand{\Cc}{\textbf{c}}
\newcommand{\Dd}{\textbf{d}}
\newcommand{\Ee}{\textbf{e}}
\newcommand{\Ff}{\textbf{f}}
\newcommand{\R}{\mathbb{R}}
\newcommand{\HH}{\mathbb{H}}
\newcommand{\CC}{\mathbb{C}}
\newcommand{\mat}[4]{\begin{pmatrix} #1&#2\\#3&#4\end{pmatrix}}
\newcommand{\pmat}[4]{\begin{pmatrix} #1&#2\\#3&#4\end{pmatrix}}

\allowdisplaybreaks

\makeatletter
\@namedef{subjclassname@2020}{%
  \textup{2020} Mathematics Subject Classification}
\makeatother

\DeclareMathOperator{\dist}{dist}
\DeclareMathOperator{\dvol}{dvol}
\DeclareMathOperator{\vol}{vol}
   
\begin{document}

\title[]{Resonance counting and spectral gaps for hyperbolic surfaces via Fourier decay}
\author[L.\@ Soares]{Louis Soares}
\email{louis.soares@gmx.ch}
\subjclass[2020]{Primary: 58J50, 11M36; Secondary: 35P25, 37C30, 37D35, 81U24, 28A80, 42A38}
\keywords{Fractal Weyl bounds, spectral gap, hyperbolic surfaces, Fourier decay, transfer operators, resonances}

\begin{abstract}
We prove an improved fractal Weyl upper bound for resonances of infinite-area convex cocompact hyperbolic surfaces, expressed in terms of the Hausdorff dimension of the limit set and the Fourier decay exponent of the Patterson--Sullivan measure. This bound also yields a sharper upper bound for the essential spectral gap for these surfaces.
\end{abstract}
\maketitle

\section{Introduction}
Let $X = \Gamma \backslash \HH^2$ be a non-elementary convex cocompact hyperbolic surface of infinite area, and let $\Delta_X$ be its Laplace--Beltrami operator. After conjugating $\Gamma$, we may assume that its limit set $\Lambda$ is a compact subset of $\mathbb{R}$. It consists of the accumulation points of any orbit $\Gamma z$, $z \in \HH^2$, and is a perfect nowhere-dense set. We denote by $\delta \in (0,1)$ the Hausdorff dimension of $\Lambda$. The resonances of $X$ are the poles of the meromorphic continuation of
\[
R_X(s) := (\Delta_X - s(1-s))^{-1};
\]
see Borthwick's book \cite{Bor16} for more background. By the results of Patterson and Sullivan \cite{Pa76,Sul79}, $s=\delta$ is a simple resonance, and there are no other poles in \(\{\mathrm{Re}(s)\ge\delta\}\).

If $\delta > \frac{1}{2}$, then $X$ has only finitely many resonances in \(\{\mathrm{Re}(s)>1/2\}\). They lie in the interval \((\frac{1}{2},\delta]\) and are exactly those coming from eigenvalues of \(\Delta_X\) in \((0,1/4)\). Moreover, Bourgain--Dyatlov \cite{BD18} proved that there exists \(\beta = \beta(X) >0\) such that \(\{\mathrm{Re}(s)>\frac{1}{2}-\beta\}\) contains only finitely many resonances.

We now allow any \(\delta\in(0,1)\). Naud~\cite{Naud2} proved that there exists \(\epsilon_0>0\) (depending on \(X\)) such that \(\{\mathrm{Re}(s)>\delta-\epsilon_0\}\) contains no resonances except \(s=\delta\). Let \(g(X)\) denote the essential spectral gap, i.e. the infimum of \(\sigma\) such that \(\{\mathrm{Re}(s)>\sigma\}\) contains only finitely many resonances. Naud's result gives \(g(X)\le\delta-\epsilon_0\). Jakobson--Naud \cite{JN2012} conjectured that \(g(X)=\delta/2\). The heuristic for this conjecture is a square-root cancellation argument for the spectral radius of the classical transfer operator \(\mathcal{L}_s\) associated with Schottky groups. The operator $\mathcal{L}_s$ depends holomorphically on the spectral parameter $s\in \mathbb{C}$ and is a complexified version of the Ruelle transfer operator of the Bowen--Series map induced by the action of \(\Gamma\) on the limit set \(\Lambda\).

In \cite{BD17}, Bourgain--Dyatlov proved the following quantitative Fourier decay result for the Patterson--Sullivan probability measure:

\begin{theorem}[{\cite[Theorem~2]{BD17}}]\label{thm:BD_fourier_decay}
Let $X = \Gamma \backslash \mathbb{H}^2$ be a non-elementary infinite-area convex cocompact hyperbolic surface, and let $\delta \in (0,1)$ be the Hausdorff dimension of its limit set $\Lambda$. Let $\mu$ be the Patterson--Sullivan probability measure based at $\mathrm{i}\in\mathbb{H}^2$. Assume that
\[
\varphi \in C^2(\mathbb{R}; \mathbb{R}), \qquad g \in C^1(\mathbb{R}; \mathbb{C})
\]
are functions satisfying the following bounds for some constant $C_{\varphi,g}$:
\[
\|\varphi\|_{C^2} + \|g\|_{C^1} \le C_{\varphi,g}, \qquad \inf_{\Lambda} |\varphi'(x)| \ge C_{\varphi,g}^{-1}.
\]
Then there exist constants $\beta > 0$ depending only on $\delta$, and $C > 0$ depending only on $X$ and $C_{\varphi,g}$, such that
\begin{equation}\label{eq:BD_decay}
\left| \int_{\Lambda} e^{i\xi \varphi(x)} g(x) \, d\mu(x) \right| \le C |\xi|^{-\beta/2}
\qquad \text{for all } |\xi| > 1.
\end{equation}
\end{theorem}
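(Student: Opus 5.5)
This is the Bourgain--Dyatlov Fourier decay theorem, and I would follow their strategy: reduce to a multiscale exponential sum over products of derivatives of group elements, and then invoke a discretized sum--product (Fourier decay for multiplicative convolutions) estimate. First set up the Schottky structure. Write $\Gamma$ as a Schottky group with disjoint intervals $I_a$ and generators $\gamma_a$, and encode $\Lambda$ by reduced words. Let $\mu$ be the Patterson--Sullivan measure. It satisfies the equivariance relation $\mu = \sum_{a} (\gamma_{a})_*\bigl(|\gamma_a'|^\delta \mu|_{I_{a'}}\bigr)$, hence the transfer-operator identity $\int f\,d\mu = \int \mathcal L_\delta^n f\,d\mu$ for all $n$. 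It is also $\delta$-regular: $\mu(I)\asymp |I|^\delta$ for intervals centred on $\Lambda$. Apply the identity with $f=e^{i\xi\varphi}g$ and a partition by words $\mathbf a$ of length $\approx$ a fixed multiple of $\log|\xi|$, chosen so that $|\gamma_{\mathbf a}'|\approx \tau$ on the relevant set. This rewrites the integral as a sum over words of $\int e^{i\xi\varphi(\gamma_{\mathbf a} x)} |\gamma_{\mathbf a}'(x)|^\delta g(\gamma_{\mathbf a}x)\,d\mu(x)$.

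Next comes the key step, iterating the decomposition $k$ times so that $\gamma_{\mathbf a} = \gamma_{\mathbf a_1}\cdots\gamma_{\mathbf a_k}$, each factor having derivative about $|\xi|^{-1/(2k+1)}$ or so. Linearize $\varphi\circ\gamma_{\mathbf a}$ around points of each sub-cylinder, using $C^2$ bounds on $\varphi$ and bounded distortion of M\"obius maps. The phase then becomes, up to negligible error, $\xi\,\varphi'(\cdot)\,\gamma_{\mathbf a_1}'\cdots\gamma_{\mathbf a_k}'(x_j)\cdot x$. This is where $\inf_\Lambda|\varphi'|>0$ enters: the factor $\varphi'$ is bounded below, so the effective frequency is not degenerate. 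By H\"older and Cauchy--Schwarz over the outer words, the integral is bounded by an average over tuples of $|\widehat{\mu}(\eta\, \zeta_1\cdots\zeta_k)|$-type expressions, where $\zeta_j$ ranges over the set of (rescaled) derivatives $\gamma_{\mathbf a_j}'$ at scale $|\xi|^{-1/k}$.

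Finally, I would apply the Bourgain discretized sum--product Fourier decay lemma. It says: if probability measures $\lambda_j$ on $[1/2,2]$ are non-concentrated at all scales between $|\eta|^{-1}$ and $|\eta|^{-\epsilon}$ (i.e., $\lambda_j(I)\le |I|^{\kappa}$ for small intervals $I$), then $\bigl|\int e^{i\eta \zeta_1\cdots\zeta_k}\,d\lambda_1\cdots d\lambda_k\bigr|\le |\eta|^{-\epsilon'}$. The main obstacle is verifying this non-concentration for the distribution of derivatives $\gamma_{\mathbf a}'(x)$. I would derive it from the non-arithmeticity of $\Gamma$ (non-elementary, so not conjugate into a lattice-like cyclic structure) together with the $\delta$-regularity of $\mu$. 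Concretely, I would show that for two distinct generators the map $x\mapsto \log\gamma'_{\mathbf a b}(x)-\log\gamma'_{\mathbf a c}(x)$ has derivative bounded below on $\Lambda$, which spreads derivative values out at every scale, giving a $\kappa$ depending only on $\delta$. Collecting the losses from linearization errors and from Cauchy--Schwarz then gives decay $C|\xi|^{-\beta/2}$, with $\beta$ depending only on $\delta$ through $\kappa$ and $k$, and $C$ depending on $X$ and $C_{\varphi,g}$.
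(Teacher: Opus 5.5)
The paper does not prove this statement. It is quoted from \cite[Theorem~2]{BD17} and used only as a black box, through Lemma~\ref{lem:localized_BD} and Corollary~\ref{cor:BD_fourier_decay_cor}, so the benchmark is the Bourgain--Dyatlov argument. Your outline reproduces its architecture: transfer-operator invariance, iteration over the partitions $Z(\tau)$, linearization, reduction to multiplicative exponential sums, and Bourgain's sum--product bound. Two points in the reduction are stated loosely.
\begin{enumerate}[(i)]
\item The Patterson--Sullivan measure based at $\mathrm{i}$ is not itself $\mathcal L_\delta^*$-invariant. One works with $\mu_\delta=C(1+x^2)^\delta\mu$ and absorbs the density into $g$.
\item A product $\zeta_1\cdots\zeta_k$ of \emph{independent} variables only arises if frozen blocks are interleaved between the summed ones. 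The reason is that the derivative of a summed block $\mathbf b_j$ is evaluated at a point set by the next block. With $\mathbf a_{j-1},\mathbf a_j$ frozen, $\zeta_j$ is essentially $\tau^{-2}\gamma'_{\mathbf a_{j-1}'\mathbf b_j'}(y_j)$ with $y_j\in I_{\mathbf a_j}$. This is a function of $\mathbf b_j$ alone, up to a relative error $O(\tau)$.
\end{enumerate}

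The genuine gap is the non-concentration step, which you rightly call the main obstacle but do not supply. Bourgain's lemma needs the following for the frozen words: as $\mathbf b$ runs over the partition, the values $\zeta_j(\mathbf b)$ put mass $\ll\sigma^{\kappa}$ into every interval of length $\sigma\in[|\eta|^{-1},|\eta|^{-\epsilon_3}]$. Here $\kappa$ must depend only on $\delta$, and $k$ is then chosen in terms of $\kappa$.

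Non-arithmeticity is the wrong input. It is qualitative, and any quantitative (Diophantine) form of it would make $\kappa$, hence $\beta$, depend on $X$ rather than only on $\delta$. What \cite{BD17} exploit instead is the nonlinearity of M\"obius maps together with $\delta$-regularity.

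Your concrete claim is true, but it concerns a single pair of branches splitting at one letter, with the evaluation point $x$ moving. The hypothesis, by contrast, is about the distribution over the summed word at a frozen point. For a general pair of words one has
\[
\Bigl|\frac{d}{dx}\log\frac{\gamma'_{\mathbf u}(x)}{\gamma'_{\mathbf v}(x)}\Bigr|
=\frac{2\,|x_{\mathbf u}-x_{\mathbf v}|}{|x-x_{\mathbf u}|\,|x-x_{\mathbf v}|},
\qquad x_{\mathbf u}=\gamma_{\mathbf u}^{-1}(\infty),
\]
which is the computation behind Proposition~\ref{prop:phaseDerivative}. This rate degenerates when the poles are close, i.e.\ when $\mathbf u,\mathbf v$ share a long suffix. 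To turn it into non-concentration you would need to:
\begin{enumerate}[(a)]
\item use a transversal parameter, such as the frozen point or a splitting of $\mathbf b$ into prefix and suffix;
\item count pairs $(\mathbf b_1,\mathbf b_2)$ by pole separation via $\delta$-regularity, to get an energy bound at each dyadic scale;
\item convert that energy bound into the sup bound the lemma requires;
\item control the frozen tuples for which this fails.
\end{enumerate}
None of this appears in the proposal. As written, ``spreads derivative values out at every scale'' is an assertion, made at exactly the point where the group structure has to be used.
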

From now on, we let $\beta_{\mathrm{F}}>0$ denote the best exponent $\beta$ for which Theorem~\ref{thm:BD_fourier_decay} holds, and we call it the Fourier decay exponent. In the same paper, Bourgain--Dyatlov then deduced from this result that 
\begin{equation}\label{eq:BD_spectral_gap}
g(X)\le g_{\mathrm{BD}}:=\delta-\frac{\beta_{\mathrm{F}}}{8},
\end{equation}
using the fractal uncertainty principle developed earlier by Dyatlov--Zahl \cite{DZ16}; see Theorem~1 and the discussion following Theorem~2 in \cite{BD17}. Note that Theorem~\ref{thm:BD_fourier_decay} implies that the Fourier dimension of $\Lambda$ is bounded below by $\beta_{\mathrm{F}}$. Since the Hausdorff dimension of a set is greater than or equal to its Fourier dimension, we have $\beta_{\mathrm{F}} \le \delta$.

A related question studied in the literature is the distribution of resonances. Let $\mathcal{R}_X$ be the set of resonances, counted with multiplicities, where the multiplicity of a resonance $s_0$ is the rank of the residue of $R_X(s)$ at $s_0$. For $\sigma\in\mathbb{R}$ and $T>0$, define the counting function
\[
M_X(\sigma,T) := \#\{\, s\in\mathcal{R}_X : \mathrm{Re}(s)\ge\sigma,\ |\mathrm{Im}(s)-T|\le 1 \,\}.
\]
Guillop\'e--Lin--Zworski~\cite{GLZ} proved the fractal Weyl bound $M_X(\sigma,T)=O(T^\delta)$ as $T\to\infty$. The fractal Weyl law conjecture~\cite{Lu_Sridhar_Zworski} predicts that this bound is sharp for sufficiently negative $\sigma$. Define $m_X(\sigma)$ as the infimum of exponents $\alpha$ such that $M_X(\sigma,T)=O(T^{\alpha+\epsilon})$ for all $\epsilon>0$ as $T\to\infty$. Building on the work of Naud~\cite{Naud14}, Jakobson--Naud \cite{JN2016} proved that $m_X(\sigma)\leq\delta-\tau(\sigma)$ for some function $\tau(\sigma)$ that is positive for $\sigma\in(\delta/2,\delta)$. Their result is stated for convex cocompact congruence subgroups of $\mathrm{SL}_2(\mathbb{Z})$, but their method applies to any $X$. This function can be expressed in terms of the topological pressure of the Bowen--Series map, but it is difficult to evaluate explicitly. Dyatlov~\cite[Theorem~1]{Dya19} proved the explicit bound
\begin{equation}\label{intro:dyatlov}
m_X(\sigma)\le 2(\delta-\sigma) = \delta - 2 \left(\sigma - \frac{\delta}{2}\right).
\end{equation}
In the appendix of~\cite{Dya19}, Borthwick--Dyatlov--Weich carried out numerical computations for several explicit convex cocompact surfaces. These computations suggest that this bound is not optimal, although it remains close to the observed counting exponents for the surfaces studied. Cunningham~\cite[Theorem~1.1]{Cun26} recently proved that
\begin{equation}\label{intro:cunningham}
m_X(\sigma)\le 4\left(\delta-\sigma\right) - \frac{\beta_{\mathrm{F}}}{2}.
\end{equation}
This bound has the nice property that it recovers the Bourgain--Dyatlov spectral gap, since it yields $m_X(\sigma)<0$ for all $\sigma>g_{\mathrm{BD}}$. We should mention that both \cite{Dya19} and \cite{Cun26} define resonances as poles of $(\Delta_X-\frac14-\lambda^2)^{-1}$, which is related to our parametrization by $s=\frac12-i\lambda$.

Our main theorem provides sharper explicit counting and spectral gap estimates. See also Tables~\ref{tab:counting_exponent_bounds} and \ref{tab:essential_spectral_gap} for a comparison with previous results.

\begin{theorem}[Main Theorem]\label{thm:main}
Let $X$ be a non-elementary infinite-area convex cocompact hyperbolic surface. Then, for all $\sigma\in(\delta/2,\delta]$, we have
\begin{equation}\label{eq:main_counting_bound}
m_X(\sigma)\le \left( 2 + \frac{\beta_{\mathrm{F}}}{\delta} \right)\left(\delta-\sigma\right) - \frac{\beta_{\mathrm{F}}}{2} = \delta-  \left(2+\frac{\beta_{\mathrm{F}}}{\delta}\right) \left(\sigma - \frac{\delta}{2}\right),
\end{equation}
where $\beta_{\mathrm{F}}$ is the Fourier decay exponent defined after Theorem~\ref{thm:BD_fourier_decay}. In particular, we have
\begin{equation}\label{eq:main_spectral_gap}
g(X)\le \delta - \frac{\delta \beta_{\mathrm{F}}}{4\delta + 2\beta_{\mathrm{F}}}.
\end{equation}
\end{theorem}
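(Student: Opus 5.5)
We follow the transfer-operator approach of Naud, Jakobson--Naud, Dyatlov and Cunningham. Realize $\Gamma$ as a Schottky group with Bowen--Series map, and let $\mathcal{L}_s$ be the transfer operator acting on holomorphic functions on a neighborhood $D$ of $\Lambda$. Resonances in $\{|\mathrm{Im}(s)-T|\le 1\}$ are zeros of $\zeta(s)=\det(1-\mathcal{L}_s)$. By Jensen's formula it suffices to bound $\log|\det(1-\mathcal{L}_s^{N})|$ from above on a disc of radius $O(1)$ around $\sigma_0+iT$. We need this to be $O(T^{\alpha+\epsilon})$, with $\alpha$ the right-hand side of \eqref{eq:main_counting_bound}, and we take $\sigma_0$ slightly larger than $\sigma$. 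The standard Weyl-inequality estimate is
\[
\log|\det(1-\mathcal{L}^N_s)|\le \sum_k \log(1+\mu_k(\mathcal{L}_s^N)),
\]
where $\mu_k$ are singular values. This reduces the count to a Hilbert--Schmidt estimate: $M_X(\sigma,T)\lesssim \|\mathcal{L}^N_s\|_{HS}^2$ on a Bergman space of a neighborhood of $\Lambda$ of scale $r\sim T^{-1}$, times logarithmic losses.

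We discretize at the scale $\tau=T^{-1}$ and write $\mathcal{L}_s^N$ as a sum over words $\Aa$ whose Schottky intervals $I_\Aa$ have length $\approx\tau$. There are about $\tau^{-\delta}=T^{\delta}$ such words, and $|\gamma_\Aa'|^{\sigma}\approx \tau^{\sigma}$. Expanding $\|\mathcal{L}^N_s\|^2_{HS}$ as a double sum over pairs $(\Aa,\Bb)$ gives oscillatory sums of the form
\[
\sum_{\Aa,\Bb}|\gamma_\Aa'|^{\sigma}|\gamma_\Bb'|^{\sigma}e^{iT(\log|\gamma_\Aa'(x)|-\log|\gamma_\Bb'(x)|)}.
\]
The trivial bound gives Guillop\'e--Lin--Zworski's $T^{\delta}\cdot T^{\delta-2\sigma}\cdot T^{\delta}$ normalization, which reproduces Dyatlov's $2(\delta-\sigma)$. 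To gain, we use a two-scale decomposition. We choose an intermediate scale $\tau^{\theta}$ with $\theta\in[0,1]$ and factor words as $\Aa=\Aa_1\Aa_2$, where $\Aa_1$ has length $\tau^{\theta}$. For fixed $\Aa_1$, the inner sum over $\Aa_2$ becomes, after using the Gibbs property of $\mu$ to replace sums over words by integrals, an integral $\int e^{i\xi\varphi}g\,d\mu$. Here the frequency is $\xi\approx T\tau^{\theta}=T^{1-\theta}$, and the phase $\varphi$ comes from the distortion $\log|\gamma_{\Aa_1}'\circ\gamma_{\Aa_2}|$ rescaled. We check that $\varphi''$ or $\varphi'$ is bounded below on $\Lambda$; this nondegeneracy comes from the non-linearity of M\"obius maps, as in Bourgain--Dyatlov's deduction. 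Theorem~\ref{thm:BD_fourier_decay} then saves a factor $T^{-(1-\theta)\beta_{\mathrm{F}}/2}$ on each inner block.

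We then optimize $\theta$. The outer sum costs $T^{\theta(\delta-2\sigma)}$-type weights relative to the trivial bound, and the Fourier gain applies only at the inner scale. Balancing these against the counting normalization leads to the choice $\theta$ proportional to $(\delta-\sigma)/\delta$; this is where the factor $\beta_{\mathrm{F}}/\delta$ enters. We expect the resulting exponent to be $(2+\beta_{\mathrm{F}}/\delta)(\delta-\sigma)-\beta_{\mathrm{F}}/2$, valid for $\sigma>\delta/2$, which is the range where the chosen $\theta$ lies in $[0,1]$. The spectral gap \eqref{eq:main_spectral_gap} follows immediately. The right-hand side of \eqref{eq:main_counting_bound} is negative exactly when
\[
\sigma-\frac{\delta}{2}>\frac{\delta^2}{2\delta+\beta_{\mathrm{F}}},
\]
that is, when
\[
\sigma>\delta-\frac{\delta\beta_{\mathrm{F}}}{2(2\delta+\beta_{\mathrm{F}})}.
\]
In that range $M_X(\sigma,T)\to0$, so only finitely many resonances have real part at least $\sigma$.

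The main obstacle is the second step: converting $\|\mathcal{L}^N_s\|_{HS}^2$ into Patterson--Sullivan oscillatory integrals with \emph{uniform} $C^2$ control on the phases $\varphi$ and uniform lower bounds on $\inf_\Lambda|\varphi'|$ across all $\Aa_1$. These uniform bounds are needed so that the constant $C$ in Theorem~\ref{thm:BD_fourier_decay} does not depend on the word. The lower bound must come from separating pairs $(\Aa,\Bb)$ whose derivative ratios differ. Near-diagonal pairs are handled trivially, and their contribution must be shown to be within budget. We would try a dyadic decomposition according to $|\log|\gamma'_{\Aa}|-\log|\gamma'_{\Bb}||$, together with the bounded-distortion lemma.
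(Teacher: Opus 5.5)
There is a genuine gap. The exponent is asserted (``we expect'') rather than derived, and the scheme as you set it up cannot produce it. Start with the baseline: the product $T^{\delta}\cdot T^{\delta-2\sigma}\cdot T^{\delta}$ you write is $T^{3\delta-2\sigma}$, not $T^{2(\delta-\sigma)}$. Getting Dyatlov's exponent requires the separation lemma (Lemma~\ref{lem:separation}) to make the double sum essentially diagonal. But if all your words live at scale $T^{-1}$, the same scale as the domain $\Omega(h)$ with $h\asymp T^{-1}$, separation kills every pair that is not prefix-comparable. Then no oscillatory off-diagonal sum survives for Fourier decay to act on. Without separation the baseline is $3\delta-2\sigma$, and a saving of $T^{-\beta_{\mathrm F}/2}$ per pair cannot beat $2(\delta-\sigma)$, since $\beta_{\mathrm F}\le\delta$.

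The missing idea is to refine the words \emph{below} the scale $h$. The paper composes $\mathcal L_{\tau',s}\mathcal L_{\tau'',s}$ with $\tau''=C_{\mathrm{sep}}h$ and $\tau'=T^{-\beta_{\mathrm F}/(2\delta)}$. The diagonal pairs then contribute $h^{-2}(\tau'\tau'')^{2\sigma-\delta}$, a factor $(\tau')^{2\sigma-\delta}$ better than stopping at scale $h$, which is a gain exactly when $\sigma>\delta/2$. The price is the new pairs $(\Cc\Aa_2,\Cc\Bb_2)$ with $\Cc\in Y(\tau'')$ and $\Aa_2\nsim_{\mathrm{suf}}\Bb_2$, which $\Omega(h)$ does not separate. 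For these, the explicit M\"obius computation (Proposition~\ref{prop:phaseDerivative}) gives, uniformly in the words,
\[
|\Phi'_{\Aa,\Bb}|\asymp\mathcal D_{\Aa,\Bb}\asymp\tau'\,\Upsilon_{\mathsf{red}(\Aa_2\overline{\Bb_2})}^{-1/2},\qquad |\Phi''_{\Aa,\Bb}|\ll|\Phi'_{\Aa,\Bb}|.
\]
This resolves your uniformity worry: after rescaling by $\mathcal D_{\Aa,\Bb}$, the hypothesis $\inf_\Lambda|\varphi'|\gg1$ holds uniformly. A lower bound on $\varphi''$ is irrelevant here, and a dyadic split by $|\log|\gamma'_\Aa|-\log|\gamma'_\Bb||$ controls the wrong quantity. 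Summing the savings with Lemma~\ref{lem:sum_of_Upsilons} bounds the off-diagonal part by $h^{-2}(\tau'\tau'')^{2\sigma}(\tau'')^{-\delta}(\tau')^{-2\delta}T^{-\beta_{\mathrm F}/2}$. Balancing against the diagonal count $(\tau'\tau'')^{-\delta}$ forces $\tau'=T^{-\beta_{\mathrm F}/(2\delta)}$, independent of $\sigma$. So your heuristic ``$\theta\propto(\delta-\sigma)/\delta$, with $\sigma>\delta/2$ the range where $\theta\in[0,1]$'' is not the mechanism. The factor $\beta_{\mathrm F}/\delta$ comes from $(\tau')^{2\sigma-\delta}=T^{-(\beta_{\mathrm F}/\delta)(\sigma-\delta/2)}$.

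The second missing ingredient is the passage from the Hilbert--Schmidt norm to integrals against $\mu_\delta$. Theorem~\ref{thm:BD_fourier_decay} concerns integrals over $\Lambda$. The Hilbert--Schmidt norm, however, is an area integral over $\Omega(h)\subset\mathbb C$ of the expressions $g_{\Aa,\Bb}(z;s)$. Area measure on $\Omega(h)$ is comparable to $h^{2-\delta}\mu_\delta$ only component by component, up to multiplicative constants, and that destroys the phase information between components. Appealing to ``the Gibbs property to replace sums over words by integrals'' fails for the same reason. Gibbs gives $\mu(I_\Aa)\asymp\Upsilon_\Aa^\delta$ only up to constants, and your weights are $|\gamma'_\Aa|^\sigma$ with $\sigma\ne\delta$. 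The paper handles this with a third, outer factor $\mathcal L_{\tau,s}$ and Proposition~\ref{prop:approximation}. It Taylor-expands $f$ to order $N$ at the points $\gamma_\Aa(x_\ell)\in\Lambda$ and applies the Remez-type inequality of Lemma~\ref{lem:polynomial-norming} for Ahlfors-regular measures. This gives
\[
\|\mathcal L_{\tau,s}f\|^2\ll\tau^{-2\delta+2\sigma-\epsilon}h^{2-\delta}\int_\Lambda|f|^2\,d\mu_\delta+\tau^{N+2\sigma}h^{-\delta}\|f\|^2,
\]
and with $\tau=cT^{-(2\delta+\beta_{\mathrm F}/2)/(2\delta+N)}$ both losses are $O(T^{\epsilon})$.

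Finally, your Jensen set-up needs repair. With the centre just to the right of $\sigma$, you have no lower bound for $\log|\det(1-\mathcal L)|$ there. The paper places the centre at $\delta+K$ with $K=\log T$. There the trace bounds together with $\tau\tau'\tau''\le T^{-1}$ make $\log|F|$ negligible, at a cost of only $T^{O(1/K)}$.
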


Since $0 < \beta_{\mathrm{F}}\le\delta$, we have $2 < 2 + \frac{\beta_{\mathrm{F}}}{\delta}\le3$. Therefore, Theorem~\ref{thm:main} simultaneously improves both previously known bounds \eqref{intro:dyatlov} and \eqref{intro:cunningham} for all $\sigma\in(\delta/2,\delta)$. The upper bound for $g(X)$ follows directly from the improved counting estimate, since the right-hand side of \eqref{eq:main_counting_bound} becomes negative whenever $\sigma$ is greater than the right-hand side of \eqref{eq:main_spectral_gap}. Indeed, for such $\sigma$ we get $M_X(\sigma,T)=0$ for all large $T$; since the set of resonances is discrete and symmetric under complex conjugation, this implies that only finitely many resonances lie in $\{\mathrm{Re}(s)\ge\sigma\}$.

\begin{table}[htbp]
\centering
\caption{Comparison of explicit bounds for the local counting exponent $m_X(\sigma)$ in the range $\sigma \le \delta$.}
\label{tab:counting_exponent_bounds}
\begin{tabular}{l|l}
\toprule
Source & Bound for $m_X(\sigma)$ \\ 
\midrule
Guillop\'e--Lin--Zworski~\cite{GLZ} & $\delta$ \\
Dyatlov~\cite{Dya19} & $2(\delta-\sigma)$ \\
Cunningham~\cite{Cun26} & $4(\delta-\sigma)-\beta_{\mathrm{F}}/2$ \\
Theorem~\ref{thm:main} & $\left( 2 + \frac{\beta_{\mathrm{F}}}{\delta} \right)\left(\delta-\sigma\right) - \frac{\beta_{\mathrm{F}}}{2}$ \\
\bottomrule
\end{tabular}
\end{table}

Combining Theorem \ref{thm:main} with the estimate of Guillop\'e--Lin--Zworski~\cite{GLZ} yields the following estimate valid for all $\sigma \le \delta$:
\[
m_X(\sigma) \le \min\left\{\delta, \left( 2 + \frac{\beta_{\mathrm{F}}}{\delta} \right)\left(\delta-\sigma\right) - \frac{\beta_{\mathrm{F}}}{2}\right\}.
\]

\begin{table}[htbp]
\caption{Comparison of explicit bounds for the essential spectral gap}
\label{tab:essential_spectral_gap}
\centering
\begin{tabular}{l|l}
\toprule
Source & Upper bound for $g(X)$ \\ 
\midrule
Naud~\cite{Naud2} & $\delta-\epsilon,$ where $\epsilon = \epsilon(X) > 0$ \\
Bourgain--Dyatlov \cite{BD18} & $\frac{1}{2}-\beta$, where $\beta=\beta(X)>0$ \\
Bourgain--Dyatlov \cite{BD17} & $\delta-\frac{\beta_{\mathrm{F}}}{8}$ \\
Theorem~\ref{thm:main} & $\delta - \frac{\delta\beta_{\mathrm{F}}}{4\delta+2\beta_{\mathrm{F}}}$ \\
\bottomrule
\end{tabular}
\end{table}

We note that Bourgain--Dyatlov's argument \cite{BD17} relies on Bourgain's sum-product estimates \cite{Bourgain2010} and does not provide an explicit lower bound for $\beta_{\mathrm{F}}$ in terms of $\delta$. Recently, Lequen--Sahlsten \cite{LequenSahlsten} proved an explicit Fourier decay estimate for the Patterson--Sullivan measure when $\delta > 1/2$, provided the phase is linear. In contrast, Theorem~\ref{thm:BD_fourier_decay} also applies to non-linear phases. In the optimal case where $\beta_{\mathrm{F}} = \delta$, our main theorem would lead to $g(X)\le \frac{5\delta}{6}$. Note that the equality $\beta_{\mathrm{F}} = \delta$ would imply that $\Lambda$ is a Salem set.

Finally, we compare our main theorem with the global counting function
\[
N_X(\sigma,T):=\#\{s\in\mathcal{R}_X:\mathrm{Re}(s)\ge\sigma,\,|\mathrm{Im}(s)|\le T\}.
\]
In \cite{Soa25}, the author proved that for $\sigma\in(\delta/2,\delta]$,
\begin{equation}\label{intro:Soares_global_counting}
N_X(\sigma,T) = O\Big(T^{1+\delta-  4 \left(\sigma - \frac{\delta}{2}\right)+\epsilon}\Big), \qquad T\to\infty.
\end{equation}
By integration over $T$, we have $N_X(\sigma,T) = O(1+T^{1+m_X(\sigma)+\epsilon})$. Hence, Theorem~\ref{thm:main} immediately gives the following upper bound for all $\sigma\in(\delta/2,\delta]$:
\[
N_X(\sigma,T) = O\Big(T^{1+\delta-  \left(2+\frac{\beta_{\mathrm{F}}}{\delta}\right) \left(\sigma - \frac{\delta}{2}\right)+\epsilon}\Big), \qquad T\to\infty.
\]
However, this does not improve \eqref{intro:Soares_global_counting}, because $2+\frac{\beta_{\mathrm{F}}}{\delta} < 4$.
On the other hand, the methods of \cite{Soa25} do not give improved bounds for $M_X$ and do not yield a spectral gap.

The paper is organized as follows. In Section~\ref{sec:Prelims}, we review basic material including Schottky groups, the associated transfer operators, thermodynamic formalism, and properties of Bergman kernels. In Section~\ref{sec:main_thm}, we prove the main theorem. A standard application of Jensen's formula reduces the proof to an estimate on the Hilbert--Schmidt norm of a certain operator whose Fredholm determinant is a multiple of the Selberg zeta function. The operator in question is given by
\[
\mathcal{L}_{\tau,\tau',\tau'';s}:=\mathcal{L}_{\tau,s}\mathcal{L}_{\tau',s}\mathcal{L}_{\tau'',s} \colon H^2(\Omega(h)) \to H^2(\Omega(h)),
\]
where the factors are refined transfer operators at the scales $\tau$, $\tau'$, and $\tau''$, and $H^2(\Omega(h))$ is the Bergman space over a small complex neighborhood $\Omega(h)$ of the limit set. The three factors play complementary roles. The outer factor $\mathcal{L}_{\tau,s}$ reduces the Hilbert--Schmidt norm to an integral over the limit set. The innermost factor $\mathcal{L}_{\tau'',s}$ is chosen at the resolution $\tau''\asymp h$, which separates the relevant branches into different connected components of the domain $\Omega(h)$. The middle factor $\mathcal L_{\tau',s}$ allows us to apply the Bourgain--Dyatlov Fourier decay theorem to exploit oscillatory cancellation as \(|t|=|\operatorname{Im}(s)|\) grows large. This separation of roles permits the parameters of the three refinements to be optimized independently.

\subsection{Notation}
We write $A \ll B$, $B\gg A$ or $A = O(B)$ when $|A| \le C|B|$ for some implied constant $C > 0$, and $A \asymp B$ when $A \ll B \ll A$. The notation $A \ll_y B$ means the implied constant may depend on the parameter $y$. We write $s = \sigma + it$ to mean that $\sigma = \mathrm{Re}(s)$ and $t = \mathrm{Im}(s)$. For a finite set $S$, we let $|S|$ or $\#S$ be its cardinality. Unless stated otherwise, all constants, implied or explicit, depend only on the Schottky data of the surface $X$, which we fix throughout; dependence on further parameters is indicated by subscripts.

\subsection{Use of AI}
AI language models were used to assist with checking arguments and improving exposition. The mathematical ideas are due to the author, except for the proof of the approximation result in Proposition~\ref{prop:approximation}. The author had established a weaker result and conjectured the stronger form stated here. At the author's request, GPT-6.1 Sol provided a proof using the Remez-type inequality in Lemma~\ref{lem:polynomial-norming}. The author critically reviewed and verified all suggestions and is solely responsible for the mathematical content.

\section{Preliminaries}\label{sec:Prelims}

\subsection{Schottky groups and combinatorial notation}
Let $\HH^2 = \{x + iy \in \CC : y > 0\}$ be the hyperbolic plane equipped with the hyperbolic metric $ds^2 = \frac{dx^2 + dy^2}{y^2}$. The group $\mathrm{SL}_2(\R)$ acts isometrically on $\HH^2$ via Möbius transformations:
$$
\gamma = \mat{a}{b}{c}{d} \in \mathrm{SL}_2(\R),\, z \in \HH^2   \Longrightarrow \gamma(z) = \frac{az + b}{cz + d}.
$$
By a theorem of Button \cite{Button} (see also \cite[Theorem~15.3]{Bor16}), every infinite-area convex cocompact hyperbolic surface arises as a quotient $X = \Gamma \backslash \HH^2$ for some Schottky group $\Gamma$. A Schottky group $\Gamma$ is constructed as follows. Fix some $m\in \mathbb{N}$ and an alphabet $\mathcal{A} = \{1, \dots, 2m\}$. For each $a \in \mathcal{A}$, define 
$$
\overline{a} = \begin{cases}a + m, & \text{if } a \le m,\\a - m, & \text{if } a > m.\end{cases}.
$$
Choose open disks $D_1, \dots, D_{2m} \subset \CC$ centered on $\mathbb{R}$ with pairwise disjoint closures. Fix isometries $\gamma_1, \dots, \gamma_{2m} \in \mathrm{SL}_2(\mathbb{R})$ such that $\gamma_a(\overline{\CC} \setminus D_{\overline{a}}) = \overline{D_a}$ and $\gamma_{\overline{a}} = \gamma_a^{-1}$ for all $a \in \mathcal{A}$. Then $\Gamma := \langle \gamma_1, \dots, \gamma_{2m} \rangle$ is a free group on $m$ generators.

\begin{figure}[ht]
\centering
\begin{tikzpicture}[scale=0.92]
  \draw (-0.6,0) -- (14.9,0);

  \draw (1.7,0) circle (1.45);
  \draw (5.0,0) circle (1.45);
  \draw (8.7,0) circle (1.88);
  \draw (12.8,0) circle (1.45);

  \draw (0.8,0) circle (0.28);
  \draw (1.7,0) circle (0.28);
  \draw (2.6,0) circle (0.28);

  \draw (4.1,0) circle (0.28);
  \draw (5.0,0) circle (0.28);
  \draw (5.9,0) circle (0.28);

  \draw (7.5,0) circle (0.33);
  \draw (8.7,0) circle (0.33);
  \draw (9.9,0) circle (0.33);

  \draw (11.9,0) circle (0.28);
  \draw (12.8,0) circle (0.28);
  \draw (13.7,0) circle (0.28);

  \node at (0.95,1.42) {$D_1$};
  \node at (5.75,1.42) {$D_3$};
  \node at (8.0,1.72) {$D_2$};
  \node at (13.45,1.42) {$D_4$};

  \node[below] at (0.8,-0.3) {$D_{12}$};
  \node[below] at (1.7,-0.3) {$D_{14}$};
  \node[below] at (2.6,-0.3) {$D_{11}$};

  \node[below] at (4.1,-0.3) {$D_{33}$};
  \node[below] at (5.0,-0.3) {$D_{32}$};
  \node[below] at (5.9,-0.3) {$D_{34}$};

  \node[below] at (7.5,-0.35) {$D_{21}$};
  \node[below] at (8.7,-0.35) {$D_{23}$};
  \node[below] at (9.9,-0.35) {$D_{22}$};

  \node[below] at (11.9,-0.3) {$D_{44}$};
  \node[below] at (12.8,-0.3) {$D_{41}$};
  \node[below] at (13.7,-0.3) {$D_{43}$};

  \draw[<-] (2.0,2.05) .. controls (2.9,2.75) and (4.0,2.75) .. (5.0,2.05);
  \draw[<-] (5.0,2.4) .. controls (4.1,3.0) and (2.9,3.0) .. (2.2,2.4);
  \node at (2.9,1.95) {$\gamma_1$};
  \node at (3.4,3.08) {$\gamma_3$};

  \draw[<-] (9.35,2.0) .. controls (10.45,2.65) and (11.75,2.65) .. (13.05,2.02);
  \draw[<-] (13.0,2.4) .. controls (11.9,3.05) and (10.55,3.05) .. (9.45,2.38);
  \node at (10.9,1.92) {$\gamma_2$};
  \node at (11.35,3.12) {$\gamma_4$};
\end{tikzpicture}
\caption{A Schottky configuration with $m=2$. The disks $D_1, D_3$ and $D_2, D_4$ are paired by $\gamma_1$ and $\gamma_2$, respectively. The inner disks $D_{ab} := \gamma_a(D_b)$ with $a,b \in \mathcal{A}= \{1,2,3,4\}$ and $a \to b$ are also shown.}
\label{fig:schottky_disks}
\end{figure}

We will use the following combinatorial notation from \cite{DZ20}. A word $\Aa = a_1 \cdots a_n$ is a finite sequence of letters $a_j \in \mathcal{A}$, called reduced if $a_j \ne \overline{a_{j+1}}$ for all $1 \le j \le n-1$. For any word $\Aa = a_1 \cdots a_n$, we define $\gamma_\Aa := \gamma_{a_1} \cdots \gamma_{a_n}$, with the empty word $\emptyset$ corresponding to the identity element $\gamma_\emptyset := \mathrm{id}$. For any word $\Aa$, the reduced word $\mathsf{red}(\Aa) \in \mathcal{W}$ is obtained by cancelling all trivial relations $a\overline{a} = \emptyset$, and we have $\gamma_\Aa = \gamma_{\mathsf{red}(\Aa)}.$

Let $\mathcal{W}_n$ denote the set of reduced words of length $n$, with $\mathcal{W}_0 = \{\emptyset\}$. Let $\mathcal{W} := \bigcup_{n \ge 0} \mathcal{W}_n$ be the set of all reduced words (including the empty word). Thanks to the freeness of $\Gamma$, we have a bijection between $\mathcal{W} $ and $\Gamma$ given by $\Aa \mapsto \gamma_{\Aa}$. Let $\mathcal{W}_{\le n} := \bigcup_{k \le n} \mathcal{W}_k$ and similarly for $\mathcal{W}_{\ge n}$. Let $\mathcal{W}^\circ := \mathcal{W} \setminus \{\emptyset\}$ denote the set of non-empty reduced words. 

For $\Aa = a_1 \cdots a_n \in \mathcal{W}^\circ$, denote $\Aa' := a_1 \cdots a_{n-1}$. Let $\Bb = b_1 \cdots b_m \in \mathcal{W}^\circ$. We write $\Aa \to \Bb$ if and only if $a_n \ne \overline{b_1}$, which is equivalent to saying that the concatenation $\Aa\Bb$ is reduced. We write $\Aa \prec \Bb$ if $\Aa$ is a prefix of $\Bb$ (including equality). We write $\overline{\Aa} := \overline{a_n} \cdots \overline{a_1}$, so $\gamma_{\overline{\Aa}} = \gamma_\Aa^{-1}$.

For $b \in \mathcal{A}$, we write $\Aa \rightsquigarrow b$ if the last letter of $\Aa$ is $b$. We say that $\Aa, \Bb \in \mathcal{W}$ are prefix-comparable, respectively suffix-comparable, if
\[
\Aa \sim_{\mathrm{pre}} \Bb \Longleftrightarrow \Aa \prec \Bb \text{ or } \Bb \prec \Aa,
\qquad
\Aa \sim_{\mathrm{suf}} \Bb \Longleftrightarrow \overline{\Aa} \sim_{\mathrm{pre}} \overline{\Bb}.
\]
We write $\Aa \nsim_{\mathrm{pre}} \Bb$ and $\Aa \nsim_{\mathrm{suf}} \Bb$ for the negations of these relations.

For each $\Aa \in \mathcal{W}^\circ$, define the disk $D_\Aa := \gamma_{\Aa'}(D_{a_n})$ and the interval $I_\Aa := D_\Aa \cap \mathbb{R}$. We have: $\Aa \prec \Bb \Rightarrow D_\Bb \subset D_\Aa$ and $\Aa \nsim_{\mathrm{pre}} \Bb \Rightarrow D_\Bb \cap D_\Aa = \emptyset$; moreover, the size of the intervals decays exponentially with the word length: there exist constants $C, \theta > 0$ with $\theta < 1$ such that
\[
  |I_\Aa| \le C\theta^{|\Aa|}.
\]
Recall that the limit set is defined as the set of accumulation points of the orbit $\Gamma z$ for any $z \in \HH^2$. In terms of the combinatorial notation, the limit set can be re-expressed as $\Lambda := \bigcap_{n \ge 1} \bigsqcup_{\Aa \in \mathcal{W}_n} I_\Aa \subset \mathbb{R}$. Let $\delta \in (0,1)$ be its Hausdorff dimension.

\textit{Throughout the remainder of the paper, fix a non-elementary Schottky group $\Gamma$, its Schottky data $\{D_a,\gamma_a\}_{a\in\mathcal A}$, and the surface $X=\Gamma\backslash\HH^2$. Thus $m\ge2$ and $\delta\in(0,1)$.}

\subsection{The standard transfer operator}\label{sec:classicalTransfOper}
For a non-empty open set $\Omega \subset \CC$, the \textit{Bergman space} over $\Omega$ is given by
\begin{equation}\label{defi_bergman}
H^2(\Omega) := \left\{ f \colon \Omega \to \CC \text{ holomorphic} : \Vert f\Vert_{L^2(\Omega)} := \left( \int_\Omega |f(z)|^2 \, \dvol(z) \right)^{1/2} < \infty \right\}.
\end{equation}
Here $\dvol$ denotes standard two-dimensional Lebesgue measure.

Let $D = \bigsqcup_{a \in \mathcal{A}} D_a$ be the union of all Schottky disks. The (standard) \textit{transfer operator} $\mathcal{L}_s \colon H^2(D) \to H^2(D)$ is defined by
\begin{equation}\label{transf_defi}
\mathcal{L}_s f(z) := \sum_{\substack{a \in \mathcal{A} \\ a \to b}} \gamma_a'(z)^s f(\gamma_a(z)), \quad z \in D_b.
\end{equation}
Recall that $a \to b$ means $a \ne \overline{b}$. For such $a,b$, the pole of $\gamma_a$ is real and lies outside $\overline{D_b}$, so $\gamma_a'(D_b)$ avoids $(-\infty,0]$ and $\gamma_a'(x)>0$ for $x\in I_b$. We therefore define $\gamma_a'(z)^s := \exp(s\mathbb L(\gamma_a'(z)))$, where $\mathbb L(z):=\log|z|+i\arg(z)$ is the principal branch of the complex logarithm.

The transfer operator $\mathcal{L}_s$ is a holomorphic family of trace class operators, and its Fredholm determinant is equal to the Selberg zeta function of $X$,
\begin{equation}\label{eq:fredholmSelberg}
\det(1 - \mathcal{L}_s) = Z_X(s),
\end{equation}
see \cite[Chapter~15]{Bor16} for details. The Selberg zeta function is classically defined for $\mathrm{Re}(s) > \delta$ by the infinite product
\[
Z_X(s) := \prod_{k=0}^\infty \prod_{\ell \in \mathcal{L}_X} \left(1 - e^{-(s+k)\ell}\right),
\]
where $\mathcal{L}_X$ is the multiset of lengths of primitive closed geodesics on $X$, including length multiplicities. By a result of Patterson--Perry \cite{PaPe01} (see also \cite{BJP03}), the resonances coincide with the zeros of the Selberg zeta function $Z_X(s)$ with the same multiplicity, except possibly in $-\mathbb{N}_0$, where topological zeros occur. Thus, counting resonances (outside $-\mathbb{N}_0$) is equivalent to counting zeros of $Z_X(s)$. From \eqref{eq:fredholmSelberg}, it follows that a zero of $Z_X(s)$ corresponds to the existence of a non-trivial solution $f\in H^2(D)$ to the equation $\mathcal{L}_s f = f$, i.e., a 1-eigenfunction of $\mathcal{L}_s$.

\subsection{Thermodynamic formalism}\label{sec:thermoFormalism}
We now make the connection between the transfer operator formalism and thermodynamic formalism. Let $I = \bigsqcup_{a \in \mathcal{A}} I_a$ be the union of intervals corresponding to the Schottky disks and consider the map $T \colon I \to \overline{\mathbb{R}}$ defined by $T(x) = \gamma_a^{-1}(x)$ for $x \in I_a$. The map $T$ is Markov and uniformly expanding after sufficiently many iterates on its non-escaping domain, with inverse branches given by $\gamma_a$ for $a \in \mathcal{A}$. The limit set is its non-escaping set, $\Lambda = \bigcap_{n \ge 0} T^{-n}(I)$. This map encodes the dynamics of the group $\Gamma$; the origins of this type of coding go back to Bowen--Series~\cite{BowenSeries1}. The transfer operator $\mathcal{L}_s$ defined in \eqref{transf_defi} is a complexified version of the Ruelle transfer operator associated to $T$ with potential $\varphi  = -s \log |T'|$, where $s = \sigma + it$:
\[
\mathcal{L}_s f(x) := \sum_{T y = x} |T'(y)|^{-s} f(y), \quad x \in I.
\]
For a continuous function $\varphi \colon I \to \mathbb{R}$, the topological pressure is defined by
\begin{equation}\label{pressure_def}
P(\varphi) := \lim_{N \to \infty} \frac{1}{N} \log \sum_{T^N x = x} e^{\sum_{k=0}^{N-1} \varphi(T^k x)}.
\end{equation}
By a result of Bowen \cite{Bowen1}, $P(\sigma) := P(-\sigma \log |T'|)$ is convex and strictly decreasing on $\mathbb{R}$, with $P(\delta) = 0$ where $\delta$ is the Hausdorff dimension of $\Lambda$. The connection to the transfer operator defined above is given by the following result:

\begin{prop}[Ruelle--Perron--Frobenius]\label{prop:RPF}
For $\sigma \in \mathbb{R}$, the transfer operator $\mathcal{L}_\sigma$ on $C^1(\overline{I})$ is a positive operator with spectral radius $e^{P(\sigma)}$, which is a simple eigenvalue with strictly positive eigenfunction $\varphi_\sigma$. The essential spectral radius is strictly smaller than $e^{P(\sigma)}$. The spectral projector onto $\{e^{P(\sigma)}\}$ is given by
\[
\mathbb{P}_\sigma(f) = \varphi_\sigma \int_{\Lambda} f \, d\mu_\sigma,
\]
where $\mu_\sigma$ is the unique probability measure satisfying $\mathcal{L}_\sigma^* \mu_\sigma = e^{P(\sigma)} \mu_\sigma$. Moreover, the measure $\mu_\sigma$ is supported on $\Lambda$, and we normalize $\varphi_\sigma$ by $\int_\Lambda\varphi_\sigma\,d\mu_\sigma=1$.
\end{prop}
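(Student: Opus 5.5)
This is the standard Ruelle--Perron--Frobenius theorem for a uniformly expanding Markov map with real-analytic potential, and I would deduce it from classical results rather than reprove it from scratch. The setup: $T$ is a Markov map on $I=\bigsqcup_a I_a$ with inverse branches $\gamma_a\colon I_b\to I_a$ for $a\to b$. Some iterate $T^{N_0}$ is uniformly expanding on the non-escaping set. The weight $\gamma_a'(x)^\sigma$ is real-analytic and strictly positive on $\overline{I_b}$. The transition matrix $A_{ab}=\mathbf 1_{a\to b}$ is irreducible and aperiodic since $m\ge 2$: every letter can follow every letter except its inverse, so $A^2$ has all entries positive. Positivity of $\mathcal L_\sigma$ on $C^1(\overline I)$ is immediate, since each term $\gamma_a'(x)^\sigma f(\gamma_a x)$ is nonnegative when $f\ge0$.

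The key steps are as follows.

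\begin{enumerate}[(i)]
\item \emph{Lasota--Yorke inequality.} Uniform contraction of the branches, $|\gamma_{\Aa}'|\le C\theta^{n}$ on $I_b$ for $\Aa\in\mathcal W_n$, together with bounded distortion of $\log\gamma_{\Aa}'$ (Koebe, or directly from the Möbius form), gives
\[
\|(\mathcal L_\sigma^n f)'\|_\infty\le C\theta^n\|\mathcal L_\sigma^n\|\,\|f'\|_\infty + C_n\|f\|_\infty .
\]
By Hennion's theorem, the essential spectral radius on $C^1$ is then at most $\theta\, r(\mathcal L_\sigma)$, strictly smaller than the spectral radius.
\item \emph{Identifying the spectral radius.} Apply Schauder--Tychonoff to $\nu\mapsto \mathcal L_\sigma^*\nu/\mathcal L_\sigma^*\nu(1)$ on probability measures on $\overline I$ to get $\mu_\sigma$ with $\mathcal L_\sigma^*\mu_\sigma=\lambda\mu_\sigma$. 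Since $\mathcal L^n_\sigma 1(x)=\sum_{\Aa}\gamma_{\Aa}'(x)^\sigma$ and bounded distortion compares this sum with the periodic-orbit sums in \eqref{pressure_def}, we get $\lambda=\lim\|\mathcal L_\sigma^n1\|_\infty^{1/n}=e^{P(\sigma)}$. Here $\gamma_{\Aa}'$ at the fixed point of $\gamma_{\Aa}$ equals $|(T^n)'|^{-1}$ along the periodic orbit.
\item \emph{Positive eigenfunction.} Look at the averages $\frac1n\sum_{k<n}e^{-kP(\sigma)}\mathcal L_\sigma^k1$. They are bounded above and below by bounded distortion, and their log-derivatives are uniformly bounded. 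Arzelà--Ascoli gives a limit $\varphi_\sigma>0$ with $\mathcal L_\sigma\varphi_\sigma=e^{P(\sigma)}\varphi_\sigma$.
\item \emph{Simplicity and the projector.} Conjugate to the normalized operator $\widehat{\mathcal L}f=e^{-P(\sigma)}\varphi_\sigma^{-1}\mathcal L_\sigma(\varphi_\sigma f)$, which satisfies $\widehat{\mathcal L}1=1$. Using aperiodicity and the contraction of the branches, a cone-contraction argument (Birkhoff cone of log-Lipschitz functions) shows $\widehat{\mathcal L}^nf\to\int f\varphi_\sigma\,d\mu_\sigma$ uniformly. Hence $e^{P(\sigma)}$ is simple, there is no other spectrum on the circle of radius $e^{P(\sigma)}$, $\mu_\sigma$ is unique, and the spectral projector is $\mathbb P_\sigma f=\varphi_\sigma\int f\,d\mu_\sigma$ once we normalize $\int\varphi_\sigma\,d\mu_\sigma=1$.
\item \emph{Support on $\Lambda$.} We have $\mu_\sigma(g)=e^{-nP(\sigma)}\mu_\sigma(\mathcal L^n_\sigma g)$, and $\mathcal L^n_\sigma g$ only samples $g$ on $\bigsqcup_{\Aa\in\mathcal W_{n+1}}I_{\Aa}$. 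So $\mu_\sigma$ gives full mass to each level-$n$ union of intervals, and hence to their intersection $\Lambda$.
\end{enumerate}

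The main obstacle is step (iv), in particular excluding peripheral eigenvalues other than $e^{P(\sigma)}$. The standard fix is to work with $T^{N_0}$, which is expanding, and to use aperiodicity of the transition matrix. Since all of this is classical (Ruelle, Bowen, Parry--Pollicott), in the paper I would cite these sources and verify only the hypotheses: expansion, bounded distortion, and mixing of the Markov structure.
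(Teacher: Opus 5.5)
Your proposal is correct and is essentially what the paper does: the paper gives no proof of its own, citing Parry--Pollicott (Theorem~2.2) and Naud (Theorem~5.1) and remarking that the result can be derived from Bowen and Liverani, and your sketch (Lasota--Yorke plus Hennion for the essential spectral radius, a Schauder--Tychonoff eigenmeasure, Ces\`aro averages for $\varphi_\sigma$, and Liverani's Birkhoff-cone contraction for simplicity, uniqueness and the projector) is exactly that standard derivation. One small point to tighten is that Arzel\`a--Ascoli in $C^0$ only yields a Lipschitz $\varphi_\sigma$; however, $\gamma_{\Aa}''/\gamma_{\Aa}' = -2/(x-x_{\Aa})$ has bounded derivative on $I_b$ for $\Aa\to b$, so the averages of $e^{-kP(\sigma)}\mathcal{L}_\sigma^k 1$ are uniformly bounded in $C^2$, and hence the limit indeed lies in $C^1(\overline{I})$.
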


\noindent The formulation of this result in the symbolic setting appears in \cite[Theorem~2.2]{ParryPollicott1990}. The version stated here is the same as in \cite[Theorem~5.1]{Naud2}, and can be derived from \cite{Bowen1975,Liverani1995}.

At $\sigma=\delta$, the measure $\mu_\delta$ is Ahlfors $\delta$-regular:
\[
\mu_\delta((x-r,x+r))\asymp r^\delta,\qquad x\in\Lambda,\quad 0<r\le1.
\]
This follows from \cite[Lemma~14.13]{Bor16} and the relation to the Patterson--Sullivan measure in \cite[Lemma~4.7]{JNS20}.

\subsection{Partitions and refined transfer operators}\label{sec:partRefOp}
For a set $Z \subset \mathcal{W}$, define its mirror set by $\overline{Z} := \{ \overline{\Aa} : \Aa \in Z \}$. To each finite set $Z \subset \mathcal{W}_{\ge 2}$, we associate an operator $\mathcal{L}_{Z,s}$ on $H^2(D)$ defined by
\begin{equation}\label{generalizedOp}
\mathcal{L}_{Z,s}f(z) := \sum_{\substack{\Aa \in \overline{Z} \\ \Aa \rightsquigarrow b}} \gamma_{\Aa'}'(z)^s f(\gamma_{\Aa'}(z)), \quad z \in D_b.
\end{equation}

For $b \in \mathcal{A}$, set
\[
Y_b(Z) := \{ \Aa \in \mathcal{W} : \Aa b \in \overline{Z} \}.
\]
Then \eqref{generalizedOp} can be re-expressed as
\[
\mathcal{L}_{Z,s}f(z) = \sum_{\Aa \in Y_b(Z)} \gamma_{\Aa}'(z)^s f(\gamma_{\Aa}(z)), \quad z \in D_b,
\]
and we will use this notation for the rest of the paper.

We now recall the definition of partitions from the work of Dyatlov--Zworski \cite{DZ20}. A finite set $Z \subset \mathcal{W}^\circ$ is a \textit{partition} if there exists $N\in\mathbb{N}$ such that for all $\Aa\in \mathcal{W}$ with word length $\vert \Aa \vert \ge N$, there exists a unique $\Bb \in Z$ such that $\Bb \prec \Aa$. In terms of the limit set, $Z$ is a partition if we have the \textit{disjoint} union
\[
\Lambda = \bigsqcup_{\Bb \in Z} (I_{\Bb} \cap \Lambda).
\]
The set $\mathcal{W}_n$ is a partition for each $n \ge 2$. In this case, the associated transfer operator is the $(n-1)$-th iterate of the standard transfer operator, that is, $\mathcal{L}_{\mathcal{W}_n,s} = \mathcal{L}_s^{n-1}.$

We are interested in the following family of partitions, introduced in \cite{DZ20}. For $0 < \tau < \min_{a \in \mathcal{A}} |I_a|$, define
\begin{equation}\label{defi:Z}
Z(\tau):= \{ \Aa\in \mathcal{W}_{\ge 2} : \vert I_{\Aa}\vert \le  \tau < \vert I_{\Aa'}\vert \}.
\end{equation}
We will call the associated transfer operator $\mathcal{L}_{\tau,s} := \mathcal{L}_{Z(\tau),s}$ the $\tau$-refined transfer operator. Set
\begin{equation}\label{defi:Y}
Y_b(\tau) := Y_b(Z(\tau)), \qquad Y(\tau) := \bigcup_{b \in \mathcal{A}} Y_b(\tau) = (\overline{Z(\tau)})'.
\end{equation}
Then \eqref{generalizedOp} reads
\[
\mathcal{L}_{\tau,s}f(z) := \sum_{\Aa \in Y_b(\tau)} \gamma_{\Aa}'(z)^s f(\gamma_{\Aa}(z)), \quad z \in D_b.
\]
The key property of partitions, established in \cite{DZ20}, is that any 1-eigenfunction of $\mathcal{L}_s$ is also a 1-eigenfunction of $\mathcal{L}_{Z,s}$ for any partition $Z$. We will need a slightly stronger version of this fact, which is the content of the following lemma.

\begin{lemma}\label{lem:fredholmRefined}
For any partition $Z \subset \mathcal{W}_{\ge 2}$, there exists a holomorphic family of trace class operators $\mathcal{T}_{Z,s}$ such that $1-\mathcal{L}_{Z,s} = (1-\mathcal{T}_{Z,s})(1-\mathcal{L}_{s}),$ where $\mathcal{L}_s$ is the standard transfer operator. More generally, for any finite collection of partitions $Z_1, \dots, Z_k \subset \mathcal{W}_{\ge 2}$, there exists a holomorphic family of trace class operators $\mathcal{T}_{s}$, depending on $Z_1, \dots, Z_k$, such that
\[
1 - \mathcal{L}_{Z_1,s} \cdots \mathcal{L}_{Z_k,s} = (1 - \mathcal{T}_{s})(1 - \mathcal{L}_s).
\] 
In particular, the Fredholm determinant of $\mathcal{L}_{Z_1,s} \cdots \mathcal{L}_{Z_k,s}$ is a holomorphic multiple of the Selberg zeta function $Z_X(s)$, i.e., there exists an entire function $H(s)$, depending on $Z_1, \dots, Z_k$, such that
\[
\det(1 - \mathcal{L}_{Z_1,s} \cdots \mathcal{L}_{Z_k,s}) = Z_X(s) H(s).
\]
\end{lemma}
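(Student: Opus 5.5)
We build the factor $\mathcal{T}_{Z,s}$ explicitly from the tree structure of the partition $Z$, following the Dyatlov--Zworski argument that a $1$-eigenfunction of $\mathcal{L}_s$ is a $1$-eigenfunction of $\mathcal{L}_{Z,s}$, and then reduce the multi-partition case to the single-partition identity.

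For a finite set $Z \subset \mathcal{W}_{\ge2}$ and a reduced word $\Bb$, write $\mathcal{L}_{\Bb,s}$ for the single-branch operator $f \mapsto \gamma_{\Bb'}'(z)^s f(\gamma_{\Bb'}(z))$ on $D_b$, where $\overline{\Bb} \rightsquigarrow b$. It is nonzero only on $D_b$, and $\mathcal{L}_{Z,s}=\sum_{\Bb\in Z}\mathcal{L}_{\Bb,s}$, with the convention matching \eqref{generalizedOp}. The chain rule gives a cocycle identity: if $\Bb = \Cc a$ with $|\Cc|\ge 2$ (so that $\Cc\to a$), then
\[
\mathcal{L}_{\Bb,s} = \mathcal{L}_{\Cc,s}\,\mathcal{L}_{\Cc_{\mathrm{last}}a,s},
\]
up to the correct ordering dictated by the mirror convention. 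In words, extending a word by one letter corresponds to composing with one more step of $\mathcal{L}_s$. Consequently, for any word $\Cc\in\mathcal{W}_{\ge2}$,
\[
\sum_{a:\ \Cc\to a}\mathcal{L}_{\Cc a,s}=\mathcal{L}_{\Cc,s}\mathcal{L}_s ,
\]
possibly with $\mathcal{L}_s$ on the other side, depending on conventions.

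Next, let $P(Z)$ be the set of words $\Cc\in\mathcal{W}_{\ge2}$ that are proper prefixes of some element of $Z$; this set is finite. Define
\[
\mathcal{T}_{Z,s}:=\sum_{\Cc\in P(Z)\cup Z_{2}}\mathcal{L}_{\Cc,s}\ \ (\text{suitably normalized}),
\]
where $Z_2$ accounts for the length-$2$ roots. The intended telescoping runs as follows. Since $Z$ is a partition, every $\Cc\in P(Z)$ has all of its one-letter extensions in $P(Z)\cup Z$, and every word of length $2$ lies in $P(Z)\cup Z$. Then
\[
(1-\mathcal{T}_{Z,s})(1-\mathcal{L}_s)=1-\mathcal{L}_s-\mathcal{T}_{Z,s}+\mathcal{T}_{Z,s}\mathcal{L}_s .
\]
By the cocycle identity, $\mathcal{T}_{Z,s}\mathcal{L}_s$ equals the sum over all one-letter extensions of the words in $P(Z)\cup\mathcal{W}_2$, which is the sum over $(P(Z)\setminus\mathcal{W}_2)\cup Z$. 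The correct choice should be $\mathcal{T}_{Z,s}=\sum_{\Cc\in P(Z)}\mathcal{L}_{\Cc,s}$, where $P(Z)$ consists of the prefixes of length $\ge2$ that are strictly shorter than elements of $Z$. With this choice, $\mathcal{T}\mathcal{L}_s - \mathcal{T}$ telescopes to $\mathcal{L}_{Z,s}-\sum_{\mathcal{W}_2\setminus Z}\mathcal{L}_{\Cc,s}$, and $\sum_{\mathcal{W}_2}\mathcal{L}_{\Cc,s}=\mathcal{L}_s$. Together these give $1-\mathcal{L}_{Z,s}$.

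The main obstacle is this bookkeeping. One must check that the index conventions ($\overline{Z}$, dropping the last letter, and the side on which $\mathcal{L}_s$ composes) make the cocycle identity hold with $\mathcal{L}_s$ on the right. One must also check that the partition property, namely uniqueness of prefixes in $Z$, is exactly what ensures the telescoping counts each word in $Z$ once and cancels each word in $P(Z)$. If the natural identity instead has $\mathcal{L}_s$ on the left, we would use $\sum_a\mathcal{L}_{a\Cc}$, that is, prepend letters to $\overline{\Cc}$, and run the same telescoping.

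Each $\mathcal{L}_{\Cc,s}$ is trace class on $H^2(D)$ and holomorphic in $s$. This holds because $\gamma_{\Cc'}$ maps $D_b$ into a disk compactly contained in $D$, so it is a composition with a holomorphic weight that factors through restriction to a relatively compact subdomain. Hence the finite sum $\mathcal{T}_{Z,s}$ is a holomorphic family of trace class operators.

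For several partitions, we apply the single-partition identity to each factor: $\mathcal{L}_{Z_j,s}=1-(1-\mathcal{T}_j)(1-\mathcal{L}_s)=\mathcal{L}_s+\mathcal{T}_j-\mathcal{T}_j\mathcal{L}_s$. Since the identity $1-\mathcal{L}_{Z,s}=(1-\mathcal{T}_{Z,s})(1-\mathcal{L}_s)$ implies $\mathcal{L}_{Z,s}$ commutes with nothing in particular, we argue directly instead. Write $A_j:=\mathcal{L}_{Z_j,s}$ and use
\[
1-A_1\cdots A_k=\sum_{j=1}^{k}A_1\cdots A_{j-1}(1-A_j).
\]
Each summand equals $A_1\cdots A_{j-1}(1-\mathcal{T}_j)(1-\mathcal{L}_s)$. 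Setting $1-\mathcal{T}_s:=\sum_j A_1\cdots A_{j-1}(1-\mathcal{T}_j)$, we get $\mathcal{T}_s=1-\sum_j A_1\cdots A_{j-1}+\sum_j A_1\cdots A_{j-1}\mathcal{T}_j$. Here the terms $A_1\cdots A_{j-1}\mathcal{T}_j$ are trace class. The term $1-\sum_{j=1}^k A_1\cdots A_{j-1}$ equals $-\sum_{j\ge2}A_1\cdots A_{j-1}$, which is trace class when $k\ge2$ because the $A_i$ are trace class; when $k=1$ it is zero. So $\mathcal{T}_s$ is a holomorphic trace class family.

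Finally, multiplicativity of Fredholm determinants for trace class perturbations of the identity, together with \eqref{eq:fredholmSelberg}, gives
\[
\det(1-\mathcal{L}_{Z_1,s}\cdots\mathcal{L}_{Z_k,s})=\det(1-\mathcal{T}_s)\,Z_X(s).
\]
Thus the claim holds with $H(s):=\det(1-\mathcal{T}_s)$, which is entire.
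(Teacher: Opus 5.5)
Your strategy (single-branch operators, a cocycle identity, telescoping over the prefix tree of $Z$, then an algebraic reduction for products) is the right one, and the conventions work out as you hoped. Let $\mathcal{L}_{\Cc,s}$ denote the branch of \eqref{generalizedOp} indexed by $\overline{\Cc}$, i.e.\ the map $\gamma_{(\overline{\Cc})'}$ on $D_b$ with $\overline{\Cc}\rightsquigarrow b$, not $\gamma_{\Cc'}$. A direct computation then gives $\mathcal{L}_{\Cc,s}\mathcal{L}_s=\sum_{a:\,\Cc\to a}\mathcal{L}_{\Cc a,s}$, with $\mathcal{L}_s$ on the right.

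However, the formula you finally commit to has the wrong sign, and the claim that ``together these give $1-\mathcal{L}_{Z,s}$'' is false. Put $S:=\sum_{\Cc\in P(Z)}\mathcal{L}_{\Cc,s}$. The one-letter extensions of words in $P(Z)$ are exactly the words of $(P(Z)\cup Z)\cap\mathcal{W}_{\ge3}$, each obtained once. This set is not $(P(Z)\setminus\mathcal{W}_2)\cup Z$, since a word of length two is not a one-letter extension of any word in $\mathcal{W}_{\ge2}$. Using $\mathcal{W}_2=(P(Z)\cap\mathcal{W}_2)\sqcup(Z\cap\mathcal{W}_2)$, you get
\[
S\mathcal{L}_s-S=\sum_{\Bb\in Z\cap\mathcal{W}_{\ge3}}\mathcal{L}_{\Bb,s}-\sum_{\Cc\in P(Z)\cap\mathcal{W}_2}\mathcal{L}_{\Cc,s}=\mathcal{L}_{Z,s}-\mathcal{L}_s .
\]
With your choice $\mathcal{T}_{Z,s}=S$ this yields $(1-S)(1-\mathcal{L}_s)=1-2\mathcal{L}_s+\mathcal{L}_{Z,s}$. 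The simplest case already shows the problem. For $Z=\mathcal{W}_3$ one has $P(Z)=\mathcal{W}_2$ and $S=\mathcal{L}_s$, but $(1-\mathcal{L}_s)^2\neq 1-\mathcal{L}_s^2=(1+\mathcal{L}_s)(1-\mathcal{L}_s)$.

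The repair is to set $\mathcal{T}_{Z,s}:=-\sum_{\Cc\in P(Z)}\mathcal{L}_{\Cc,s}$. Then $(1+S)(1-\mathcal{L}_s)=1-\mathcal{L}_s-(S\mathcal{L}_s-S)=1-\mathcal{L}_{Z,s}$. This closed form is equivalent to a step-by-step refinement induction in the style of \cite{DZ20}, which the paper invokes via \cite{Soa25}. Replacing a word $\Cc$ of a partition by its one-letter extensions changes $\mathcal{L}_{Z,s}$ by $\mathcal{L}_{\Cc,s}(\mathcal{L}_s-1)$, i.e.\ it subtracts $\mathcal{L}_{\Cc,s}$ from $\mathcal{T}_{Z,s}$, starting from $\mathcal{T}_{\mathcal{W}_2,s}=0$.

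The telescoping uses both halves of the partition property, not only uniqueness. Uniqueness, together with extendability of reduced words, makes $Z$ an antichain, so $P(Z)\cap Z=\emptyset$. Existence ensures that every length-two word and every one-letter extension of a word in $P(Z)$ lies in $P(Z)\cup Z$.

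The remaining parts of your proposal are correct:
\begin{enumerate}[(i)]
\item the trace class and holomorphy argument;
\item the reduction of the multi-partition case via $1-A_1\cdots A_k=\sum_j A_1\cdots A_{j-1}(1-A_j)$, which holds for arbitrary operators, so you never need to show that a product of refined operators is again the operator of a partition;
\item the determinant factorization with $H(s)=\det(1-\mathcal{T}_s)$.
\end{enumerate}
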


The statement is a combination of Lemmas 2.3 and 2.6 from \cite{Soa25}, and the proof given there is a straightforward adaptation of the arguments in \cite{DZ20}.

\subsection{Bounds for Schottky groups}\label{sec:boundsForDeriv}
We now state some bounds for Schottky groups that will be used throughout. Let us introduce some notation first, following \cite{NM20}. For each $a \in \mathcal{A}$, fix a point $o_a \in I_a$. If $\textbf{a} = a_1\cdots a_n \in \mathcal{W}^\circ$, set $o_{\textbf{a}} := o_{a_n}$, and let $o_\emptyset \in D$ be arbitrary. Thus, the reference point $o_{\textbf{a}}$ is uniquely determined by the terminal letter of $\textbf{a}$. These reference points will be fixed throughout the paper. We now define
\begin{equation}\label{defi:Upsilon}
\Upsilon_{\textbf{a}} := \gamma_{\textbf{a}}'(o_{\textbf{a}}).
\end{equation}
In particular, $\Upsilon_\emptyset=1$.

We begin with the following standard fact:

\begin{lemma}[Bounded distortion]\label{lem:bounded_distortion}
For all $\Aa = a_1\cdots a_n \in \mathcal{W}^\circ$ and $z \in D_{b}$ with $\Aa \to b$, we have
\[
\frac{\gamma''_{\Aa}(z)}{\gamma'_{\Aa}(z)} \asymp 1.
\]
\end{lemma}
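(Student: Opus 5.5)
The plan is to compute the ratio explicitly from the Möbius form of $\gamma_\Aa$, and then to bound it above and below using only the disjointness of the Schottky disks. Write $\gamma_\Aa = \mat{a}{b}{c}{d} \in \mathrm{SL}_2(\R)$. Then $\gamma_\Aa'(z) = (cz+d)^{-2}$ and $\gamma_\Aa''(z) = -2c(cz+d)^{-3}$. Hence
\[
\frac{\gamma_\Aa''(z)}{\gamma_\Aa'(z)} = \frac{-2c}{cz+d} = \frac{-2}{z - p_\Aa}, \qquad p_\Aa := -\frac{d}{c} = \gamma_\Aa^{-1}(\infty),
\]
provided $c \neq 0$. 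So the claim reduces to showing that $c\neq 0$ and that $|z - p_\Aa| \asymp 1$ uniformly in $\Aa$ and in $z \in D_b$ with $\Aa \to b$. I read the statement $\gamma_\Aa''/\gamma_\Aa' \asymp 1$ as $|\gamma_\Aa''(z)/\gamma_\Aa'(z)| \asymp 1$.

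First I check $c \neq 0$. The point $\infty$ lies outside $\overline{D}$, since the disks are bounded. The Schottky mapping property $\gamma_{a}(\overline{\CC}\setminus D_{\overline a}) = \overline{D_a}$, iterated along the reduced word $\Aa = a_1\cdots a_n$, gives $\gamma_\Aa(\infty) \in \overline{D_{a_1}}$. In particular $\gamma_\Aa(\infty)\neq\infty$, so $\gamma_\Aa$ does not fix $\infty$, and therefore $c\neq 0$.

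Next I locate the pole. We have $\gamma_\Aa^{-1} = \gamma_{\overline{\Aa}}$, and $\overline{\Aa} = \overline{a_n}\cdots\overline{a_1}$ is reduced. Applying the same iteration to $\overline{\Aa}$ gives $p_\Aa = \gamma_{\overline{\Aa}}(\infty) \in \overline{D_{\overline{a_n}}}$.

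Now let $z\in D_b$ with $\Aa\to b$, i.e. $b \neq \overline{a_n}$. Since the disks have pairwise disjoint closures, $|z - p_\Aa| \ge \min_{a\ne a'} \dist(D_a, D_{a'}) > 0$. This gives the upper bound $|\gamma_\Aa''/\gamma_\Aa'| \ll 1$. For the lower bound, both $z$ and $p_\Aa$ lie in the compact set $\overline{D}$, so $|z-p_\Aa| \le \operatorname{diam}(\overline{D}) < \infty$. This gives $|\gamma_\Aa''/\gamma_\Aa'| \gg 1$.

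Both constants depend only on the Schottky data, so the bound is uniform in $\Aa$, as required. I do not expect a real obstacle here. The only point needing care is the combinatorial bookkeeping that places $p_\Aa$ in $\overline{D_{\overline{a_n}}}$, a disk distinct from $D_b$, using the relation $\Aa \to b$; this is the standard ping-pong induction on the length of the reduced word.
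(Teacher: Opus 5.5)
Your proof is correct and follows the paper's argument essentially verbatim. Both use the explicit formula $\gamma_\Aa''(z)/\gamma_\Aa'(z) = -2/(z-x_\Aa)$ and place the pole $x_\Aa=\gamma_\Aa^{-1}(\infty)$ in the Schottky disk $D_{\overline{a_n}}$, which differs from $D_b$ when $\Aa\to b$; disjointness of closures and boundedness of $D$ then give $|z-x_\Aa|\asymp 1$, and your ping-pong check that $c\neq 0$ is a detail the paper leaves implicit.
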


\begin{proof}
We can give a direct proof by writing $\gamma_{\Aa}$ as a matrix in $\mathrm{SL}_2(\mathbb{R})$:
\[
\gamma_{\Aa} = \pmat{a_{\Aa}}{b_{\Aa}}{c_{\Aa}}{d_{\Aa}}.
\]
We can then compute
\[
\gamma_{\Aa}'(z) = \frac{1}{(c_{\Aa} z + d_{\Aa})^2}
= \frac{1}{c_{\Aa}^2 (z - x_{\Aa})^2}, \quad \text{where } x_{\Aa} = -\frac{d_{\Aa}}{c_{\Aa}} = \gamma_{\Aa}^{-1}(\infty),
\]
and
\begin{equation}\label{eq:log_der}
\frac{\gamma_{\Aa}''(z)}{\gamma_{\Aa}'(z)} = -\frac{2}{z - x_{\Aa}}.
\end{equation}
Since $\infty$ is not contained in any of the Schottky disks $D_b$, the point $x_{\Aa} = \gamma_{\Aa}^{-1}(\infty)$ lies inside $D_{\overline{a}_n}$. Recall that $\Aa \to b$ means that $\overline{a}_n \neq b$. Hence, since the Schottky disks have mutually disjoint closures, we have $|z - x_{\Aa}| \asymp 1$ for $z \in D_b$ with $\Aa \to b$, which implies the desired bounded distortion property.
\end{proof}

We use the following derivative bounds; see \cite[Lemmas~3.1--3.5]{NM20}.

\begin{lemma}[Derivative bounds]\label{lem:boundsderivatives}
We have the following:
\begin{enumerate}[(i)]
\item For $b \in \mathcal{A}$, $z \in D_b$, $\Aa \to b$: $|\gamma'_{\Aa}(z)| \asymp |I_\Aa| \asymp \Upsilon_{\Aa}$.
\item $\Upsilon_{\Aa'} \asymp \Upsilon_{\Aa}$ for all $\Aa \in \mathcal{W}^\circ$.
\item $\Upsilon_{\overline{\Aa}} \asymp \Upsilon_{\Aa}$ for all $\Aa \in \mathcal{W}^\circ$.
\item $\Upsilon_{\Aa\Bb} \asymp \Upsilon_{\Aa} \Upsilon_{\Bb}$ for $\Aa, \Bb \in \mathcal{W}^\circ$ with $\Aa \to \Bb$.
\end{enumerate}
\end{lemma}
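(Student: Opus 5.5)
The plan is to reduce everything to one uniform comparison, $|\gamma_\Aa'(z)| \asymp c_\Aa^{-2}$, using the matrix representation from the proof of Lemma~\ref{lem:bounded_distortion}. Write $\gamma_\Aa = \pmat{a_\Aa}{b_\Aa}{c_\Aa}{d_\Aa}$. For $\Aa \in \mathcal{W}^\circ$ we have $\gamma_\Aa \neq \mathrm{id}$. Moreover $\gamma_\Aa(\infty) \in D_{a_1}$, since $\infty \notin \overline{D_{\overline{a_n}}}$ and $\gamma_\Aa$ maps the complement of $D_{\overline{a_n}}$ into $\overline{D_{a_1}}$. Hence $\gamma_\Aa$ does not fix $\infty$, so $c_\Aa \neq 0$.

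\textbf{Step 1: the uniform comparison.} Recall $\gamma_\Aa'(z) = c_\Aa^{-2}(z-x_\Aa)^{-2}$ with $x_\Aa = \gamma_\Aa^{-1}(\infty) \in D_{\overline{a_n}}$. Consider the union $U_\Aa$ of all disks $D_b$ with $\Aa \to b$, which is the union of all disks other than $D_{\overline{a_n}}$. Because the Schottky disks are finitely many, bounded, and have pairwise disjoint closures, we get $|z - x_\Aa| \asymp 1$ uniformly for $z \in U_\Aa$. Therefore
\[
|\gamma_\Aa'(z)| \asymp c_\Aa^{-2}, \qquad z \in U_\Aa.
\]
The reference point $o_\Aa = o_{a_n}$ lies in $I_{a_n} \subset U_\Aa$, because $a_n \to a_n$. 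Hence $\Upsilon_\Aa \asymp c_\Aa^{-2}$, and this proves the first comparison in (i).

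\textbf{Step 2: the remaining parts.} For (iii), note that $\gamma_{\overline{\Aa}} = \gamma_\Aa^{-1}$ has lower-left entry $-c_\Aa$, so Step 1 gives $\Upsilon_{\overline{\Aa}} \asymp c_\Aa^{-2} \asymp \Upsilon_\Aa$.

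For (iv), take $\Aa \to \Bb$ and use the chain rule:
\[
\gamma_{\Aa\Bb}'(o_\Bb) = \gamma_\Aa'(\gamma_\Bb(o_\Bb))\,\gamma_\Bb'(o_\Bb).
\]
The point $\gamma_\Bb(o_\Bb)$ lies in $D_{b_1}$, and $\Aa \to b_1$. So Step 1 gives $|\gamma_\Aa'(\gamma_\Bb(o_\Bb))| \asymp \Upsilon_\Aa$. The second factor equals $\Upsilon_\Bb$. Since $o_{\Aa\Bb} = o_\Bb$, this yields (iv).

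Part (ii) follows the same way from $\gamma_\Aa = \gamma_{\Aa'}\gamma_{a_n}$. The factor $|\gamma_{a_n}'|$ is $\asymp 1$ on $U_{a_n}$, since there are finitely many generators and Step 1 applies to each. Also $\gamma_{a_n}(o_{a_n}) \in D_{a_n}$, and $\Aa' \to a_n$, so Step 1 applies to $\gamma_{\Aa'}$ at that point. The case $\Aa' = \emptyset$ is trivial.

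Finally, the second comparison in (i), $|I_\Aa| \asymp \Upsilon_\Aa$, uses $I_\Aa = \gamma_{\Aa'}(I_{a_n})$ and the mean value theorem. On $I_{a_n}$ we have $|\gamma_{\Aa'}'| \asymp \Upsilon_{\Aa'}$ by Step 1, because $\Aa' \to a_n$. Since $|I_{a_n}| \asymp 1$, we get $|I_\Aa| \asymp \Upsilon_{\Aa'} \asymp \Upsilon_\Aa$ by (ii).

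\textbf{Main obstacle.} The only delicate point is uniformity: the implied constants must depend only on the Schottky data. This is ensured because, in Step 1, the lower bound on $|z - x_\Aa|$ depends only on the mutual distances between the closures of the disks, and the upper bound only on their diameters and positions. Neither depends on $\Aa$.
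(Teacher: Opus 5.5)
Your proof is correct, and it takes a different route from the paper. The paper does not prove this lemma; it only cites \cite[Lemmas~3.1--3.5]{NM20}. The closest it comes is in the proof of Proposition~\ref{prop:phaseDerivative}. There it establishes $|\gamma_\Aa'(z)|\asymp c_\Aa^{-2}$ on $D_b$ for $\Aa\to b$, exactly as in your Step~1, but then invokes part~(i) of this lemma to conclude $\Upsilon_\Aa\asymp c_\Aa^{-2}$.

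You run that logic in the opposite direction. Since $o_\Aa\in I_{a_n}$ and $\Aa\to a_n$, evaluating at $z=o_\Aa$ gives $\Upsilon_\Aa\asymp c_\Aa^{-2}$ directly. Parts (i)--(iv) then all follow from the single quantity $c_\Aa$, together with the chain rule and the mean value theorem.

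What this buys is a short, self-contained proof that removes the dependence on the citation. In particular, the mirror property (iii) becomes a one-line consequence of $c_{\overline{\Aa}}=-c_\Aa$. That part genuinely uses the group structure: $\gamma_{\overline{\Aa}}$ is the inverse of $\gamma_\Aa$, not a composition of contracting branches, so it does not fall out of bounded distortion and the chain rule alone.

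The trade-off is that your argument for (i), (ii), (iv) is tied to the explicit M\"obius/matrix form. Distortion-based arguments for those parts (bounded distortion on a single disk plus the chain rule) would carry over to more general conformal expanding maps.

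The small points are handled correctly: $c_\Aa\neq 0$, the empty-word case $\Aa'=\emptyset$ via $\Upsilon_\emptyset=1$, and the uniformity of constants.
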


The following bounds follow from \cite[Lemmas~2.10 and~2.13]{BD17} for $Z(\tau)$, and then for $Y(\tau)$ from Lemma~\ref{lem:boundsderivatives}(ii)--(iii); part (iii) follows from (i) and Lemma~\ref{lem:boundsderivatives}(iv).

\begin{lemma}[Partition estimates]\label{lem:YZ}
For all sufficiently small $\tau > 0$ we have the following bounds:
\begin{enumerate}[(i)]
\item For all $\Aa \in Z(\tau) \cup Y(\tau)$ we have $\Upsilon_{\Aa} \asymp \tau$.
\item $|Z(\tau)| \asymp |Y(\tau)| \asymp \tau^{-\delta}$.
\item For all $\Aa \in Z(\tau) \cup Y(\tau)$ we have $|\Aa| \asymp \log(\tau^{-1})$, where $|\Aa|$ is the reduced length of the word $\Aa$.
\end{enumerate}
\end{lemma}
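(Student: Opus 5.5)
The statement immediately above is Lemma~\ref{lem:YZ}. For $Z(\tau)$ I would take parts (i) and (ii) from \cite[Lemmas~2.10 and~2.13]{BD17}, and then transfer them to $Y(\tau)$ using the word operations of Lemma~\ref{lem:boundsderivatives}. Part (iii) would then follow from (i) by a submultiplicativity argument.

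\emph{Part (i).} For $\Aa\in Z(\tau)$, the definition \eqref{defi:Z} gives $|I_\Aa|\le\tau<|I_{\Aa'}|$. Lemma~\ref{lem:boundsderivatives}(i) gives $|I_\Aa|\asymp\Upsilon_\Aa$, and part (ii) of that lemma gives $\Upsilon_{\Aa'}\asymp\Upsilon_\Aa$. Together these yield $\tau<|I_{\Aa'}|\ll\Upsilon_\Aa\ll\tau$, so $\Upsilon_\Aa\asymp\tau$. Now let $\Bb\in Y(\tau)$, so that $\Bb=(\overline{\Aa})'$ with $\Aa\in Z(\tau)$. Applying Lemma~\ref{lem:boundsderivatives}(ii) and then (iii) gives $\Upsilon_\Bb\asymp\Upsilon_{\overline{\Aa}}\asymp\Upsilon_\Aa\asymp\tau$. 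Since $\Aa\in\mathcal{W}_{\ge2}$, the word $\Bb$ is nonempty and these lemmas apply.

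\emph{Part (ii).} The sets $I_\Aa\cap\Lambda$ with $\Aa\in Z(\tau)$ partition $\Lambda$. By Ahlfors regularity of $\mu_\delta$, each piece has measure $\asymp\tau^\delta$. For the upper bound, the lower mass estimate needs an interval centred at a point of $\Lambda\cap I_\Aa$; since $I_\Aa\cap\Lambda$ contains a child interval of comparable size, it contains a ball of radius $\asymp\tau$ around a limit point. Summing over the partition, $1=\sum_{\Aa\in Z(\tau)}\mu_\delta(I_\Aa)\asymp|Z(\tau)|\tau^\delta$. This is essentially the content of \cite{BD17}.

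For $Y(\tau)$, the map $\Aa\mapsto(\overline{\Aa})'$ is at most $2m$-to-one, since only the deleted letter is lost, so $|Y(\tau)|\le|Z(\tau)|$. In the other direction, each $\Bb\in Y(\tau)$ has at most $2m$ preimages, which gives $|Y(\tau)|\ge|Z(\tau)|/(2m)$.

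\emph{Part (iii).} I would iterate Lemma~\ref{lem:boundsderivatives}(iv) on single letters. Writing $\Aa=a_1\cdots a_n$, this gives $C^{-n}c^n\le\Upsilon_\Aa\le C^n c'^n$, with constants $0<c\le c'<1$ after also using $|I_\Aa|\le C\theta^{|\Aa|}$. The upper bound $\Upsilon_\Aa\ll\theta^{n}$ gives $n\ll\log(\tau^{-1})$. The lower bound is where care is needed. Naively iterating (iv) lets the implicit constants compound, producing $e^{-Kn}$, which still suffices: combined with (i) it gives $\tau\asymp\Upsilon_\Aa\ge e^{-Kn}$, so $n\gg\log(\tau^{-1})$.

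I expect the only genuine obstacle to be tracking the constants in this lower bound. I would avoid it by using that $\gamma_a'$ is bounded below on each relevant disk, so that the chain rule gives $\Upsilon_\Aa\ge c_0^n$ directly.
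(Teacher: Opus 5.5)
Your proposal is correct and follows the paper's route. The paper cites \cite[Lemmas~2.10 and~2.13]{BD17} for (i)--(ii) on $Z(\tau)$, transfers these to $Y(\tau)$ via Lemma~\ref{lem:boundsderivatives}(ii)--(iii), and obtains (iii) from (i) together with Lemma~\ref{lem:boundsderivatives}(iv), exactly as you do. Where the paper simply cites, you derive the $Z(\tau)$ statements yourself from the definition \eqref{defi:Z} and the Ahlfors regularity of $\mu_\delta$, and this is a sound self-contained replacement. What drives your lower mass bound is the gap of size $\asymp\tau$ between a child interval and $\partial I_{\Aa}$, rather than the child interval's size as you phrased it.
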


\subsection{Refined function spaces}\label{sec:refinedFuncti}
We work with refined function spaces following Guillop\'{e}--Lin--Zworski \cite{GLZ}. Fix a small parameter $h > 0$ and let $\Lambda(h) := \Lambda + (-h, h)$ be the real $h$-neighborhood of the limit set $\Lambda$, i.e., the set of points in $\mathbb{R}$ that are within distance $h$ of $\Lambda$. Let $N(h)$ denote the number of connected components of $\Lambda(h)$, and denote them by $I_\ell(h)$, $\ell = 1, \dots, N(h)$. From \cite[Lemma~15.14]{Bor16} and \cite[Equation~15.25]{Bor16} we have the following structural result for $\Lambda(h)$:

\begin{lemma}[Structure of $\Lambda(h)$]\label{lem:structureLambdah}
For sufficiently small $h > 0$, we have $N(h) \le Ch^{-\delta}$ and each interval $I_\ell(h)$ has diameter at most $Ch$, for some constant $C = C(\Gamma)> 0$.
\end{lemma}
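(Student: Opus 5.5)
The plan is to cover $\Lambda$ by the cylinder intervals of the partition $Z(\tau)$ at a scale $\tau = Kh$, with $K$ a large constant depending only on the Schottky data. The key geometric input is a separation property: each piece $I_\Aa\cap\Lambda$ is far from the rest of $\Lambda$ compared to $|I_\Aa|$. Once this is in hand, every connected component of $\Lambda(h)$ lives near a single cylinder. Both claims then follow from Lemma~\ref{lem:YZ}.

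\textbf{Step 1 (separation).} I will show there is $c>0$ such that for every $\Aa = a_1\cdots a_n\in\mathcal W^\circ$,
\[
\dist\bigl(I_\Aa\cap\Lambda,\ \Lambda\setminus I_\Aa\bigr)\ \ge\ c\,|I_\Aa| .
\]
\begin{itemize}
\item Every point of $\Lambda\cap I_\Aa$ lies in some child interval $I_{\Aa b}$ with $a_n\to b$, and $I_{\Aa b}=\gamma_{\Aa'}(I_{a_n b})$.
\item By the Schottky configuration, the closed intervals $\overline{I_{a_n b}}=\gamma_{a_n}(\overline{I_b})$ lie at a positive distance $c_0$ from the endpoints of $I_{a_n}$. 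This holds because the disks $D_b$ have disjoint closures, and there are only finitely many letters, so $c_0$ is uniform.
\item Apply $\gamma_{\Aa'}$ to this fixed configuration. By Lemma~\ref{lem:bounded_distortion} and Lemma~\ref{lem:boundsderivatives}(i), $|\gamma_{\Aa'}'|\asymp\Upsilon_{\Aa'}\asymp|I_\Aa|$ on $D_{a_n}$ (using also part (ii)).
\item Hence $\dist(I_{\Aa b},\mathbb R\setminus I_\Aa)\gg |I_\Aa|$. Since any point of $\Lambda\setminus I_\Aa$ lies in $\mathbb R\setminus I_\Aa$, this gives the claim.
\end{itemize}
Getting this uniform gap with the correct scaling is the main obstacle. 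It is the only place where the geometry of the disks, rather than just counting, is used.

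\textbf{Step 2 (choice of scale).} For $\Aa\in Z(\tau)$, Lemma~\ref{lem:YZ}(i) together with Lemma~\ref{lem:boundsderivatives}(i) gives $|I_\Aa|\asymp\tau$. So the gap in Step~1 is at least $c'\tau$ for a uniform $c'>0$.
\begin{itemize}
\item Choose $K$ with $c'K>2$, and set $\tau=Kh$. This is admissible for small $h$.
\item Then for each $\Aa\in Z(\tau)$, the set $(I_\Aa\cap\Lambda)+(-h,h)$ is at positive distance from $(\Lambda\setminus I_\Aa)+(-h,h)$.
\item Since $Z(\tau)$ is a partition, $\Lambda=\bigsqcup_{\Aa\in Z(\tau)}(I_\Aa\cap\Lambda)$. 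Therefore
\[
\Lambda(h)=\bigsqcup_{\Aa\in Z(\tau)}\bigl((I_\Aa\cap\Lambda)+(-h,h)\bigr),
\]
where distinct pieces are separated, after discarding the $\Aa$ with $I_\Aa\cap\Lambda=\emptyset$.
\item Consequently each connected component $I_\ell(h)$ is contained in a single piece $(I_\Aa\cap\Lambda)+(-h,h)\subset I_\Aa+(-h,h)$.
\end{itemize}

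\textbf{Step 3 (conclusion).}
\begin{itemize}
\item \emph{Diameter.} By Step~2, $\mathrm{diam}\,I_\ell(h)\le |I_\Aa|+2h\le (K+2)h$.
\item \emph{Count.} Each component contains a point of $\Lambda$, so it meets, and therefore lies in, exactly one piece. Hence $N(h)\le |Z(Kh)|$.
\item By Lemma~\ref{lem:YZ}(ii), $|Z(Kh)|\ll (Kh)^{-\delta}\ll h^{-\delta}$.
\end{itemize}
This proves Lemma~\ref{lem:structureLambdah} with $C=C(\Gamma)$ depending only on the Schottky data.
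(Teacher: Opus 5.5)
Your Steps 1 and 2 and the diameter bound are correct. The gap $\dist(I_\Aa\cap\Lambda,\Lambda\setminus I_\Aa)\gg|I_\Aa|$ follows as you say: $\gamma_a(\overline{I_b})$ is a compact subset of the open interval $I_a$ for $b\ne\overline a$, and $|\gamma_{\Aa'}'|\asymp|I_\Aa|$ on $I_{a_n}$. With $\tau=Kh$, the pieces $P_\Aa:=(I_\Aa\cap\Lambda)+(-h,h)$, $\Aa\in Z(Kh)$, are pairwise disjoint open sets covering $\Lambda(h)$. So every component lies in a single piece and has diameter at most $(K+2)h$. The paper gives no argument of its own here and only cites \cite[Lemma~15.14, Eq.~(15.25)]{Bor16}, so a self-contained proof along your lines would be a reasonable substitute.

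The counting step, however, does not follow. Knowing that each component lies in exactly one piece gives a map from components to pieces. That bounds the number of pieces by $N(h)$, which is the wrong direction. To conclude $N(h)\le|Z(Kh)|$ you would need each $P_\Aa$ to be connected, and it is not. By the argument of your Step 1, distinct children $I_{\Aa b}=\gamma_{\Aa'}(I_{a_n b})$ of $I_\Aa$ are separated by gaps $\gg|I_\Aa|\asymp Kh$, because the compact sets $\gamma_{a_n}(\overline{I_b})$, $b\ne\overline{a_n}$, are pairwise disjoint. So once $K$ is large, each piece splits into at least $2m-1$ components, and in fact $N(h)\ge(2m-1)|Z(Kh)|$, which contradicts your inequality.

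The repair is a packing argument. Each component contains some $(x-h,x+h)$ with $x\in\Lambda$, so it has length at least $2h$. The components inside $P_\Aa$ are disjoint subintervals of $I_\Aa+(-h,h)$, whose length is at most $(K+2)h$. Hence each piece contains at most $(K+2)/2$ components, and $N(h)\le\frac{K+2}{2}|Z(Kh)|\ll h^{-\delta}$.

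In fact the count needs no separation at all. The set $\Lambda(h)\subset\bigcup_{\Aa\in Z(h)}(I_\Aa+(-h,h))$ has Lebesgue measure at most $3h|Z(h)|$, so $N(h)\le\frac32|Z(h)|$. Step 1 is only needed for the diameter bound.
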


Let $I_\ell(h)$ be the intervals from Lemma \ref{lem:structureLambdah} and let $D_\ell(h)$ be the open Euclidean disk centered on $\mathbb{R}$ such that $D_\ell(h) \cap \mathbb{R} = I_\ell(h)$. We define the refined domain $\Omega(h)$ as
\begin{equation}\label{defi:omega(h)}
\Omega(h) := \bigsqcup_{\ell=1}^{N(h)} D_\ell(h).
\end{equation}
Since $\Lambda$ is compact, we have $\Omega(h) \subset D$ for all sufficiently small $h > 0$. We will work with the refined function space $H^2(\Omega(h))$ defined as in \eqref{defi_bergman}. For $b \in \mathcal{A}$, set $\Omega_b(h) := \Omega(h) \cap D_b$. We define the $\tau$-refined transfer operator on $H^2(\Omega(h))$ by
\begin{equation}\label{refOpRefSpace}
\mathcal{L}_{\tau,s}f(z) := \sum_{\Aa \in Y_b(\tau)} \gamma'_{\Aa}(z)^s f(\gamma_{\Aa}(z)), \quad z \in \Omega_b(h).
\end{equation}

We include the short argument, which is due to Naud~\cite[Lemma~3.2]{Naud14}.

\begin{lemma}[Contraction in $\Omega(h)$]\label{lem:contractionInH}
There exist $N_0 \in \mathbb{N}$ and $h_0 > 0$, depending only on $\Gamma$, such that for all $0 < h < h_0$ and $\Aa \in \mathcal{W}_{\ge N_0}$ with $\Aa \to b$,
\[
\gamma_{\Aa}(\Omega_b(h)) \subset \Omega(h/2).
\]
\end{lemma}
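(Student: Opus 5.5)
We have to show that $\gamma_\Aa(\Omega_b(h))\subset\Omega(h/2)$ for every $\Aa\in\mathcal{W}_{\ge N_0}$ with $\Aa\to b$. The idea is that $\gamma_\Aa$ is a uniform contraction once $|\Aa|$ is large, and it maps limit points to limit points. Since $\Omega(h/2)$ is a union of disks whose real traces are the components of $\Lambda(h/2)$, it is enough to prove two things. First, every point of $\gamma_\Aa(\Omega_b(h))$ lies in a Euclidean disk centered at a point of $\Lambda$ with radius $<h/2$. Second, any such disk is contained in $\Omega(h/2)$.

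\emph{Step 1: geometry of a component of $\Omega_b(h)$.} Fix a disk $D_\ell(h)\subset D_b$, with $I_\ell(h)=D_\ell(h)\cap\R$ a component of $\Lambda(h)$. Every point of $I_\ell(h)$ lies within $h$ of $\Lambda$, and $|I_\ell(h)|\le Ch$ by Lemma~\ref{lem:structureLambdah}. Pick a point $x_\ell\in\Lambda\cap I_\ell(h)$; this set is nonempty since $\Lambda(h)=\Lambda+(-h,h)$. Then $D_\ell(h)\subset B(x_\ell,Ch)$, with $C$ depending only on $\Gamma$. For $h<h_0$ small, the disk $B(x_\ell,Ch)$ stays inside a slightly enlarged Schottky disk $\widetilde D_b$ on which the bounded distortion of Lemmas~\ref{lem:bounded_distortion} and~\ref{lem:boundsderivatives}(i) still holds. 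This uses that $\Lambda\cap I_b$ is compactly contained in $D_b$.

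\emph{Step 2: contraction.} By Lemma~\ref{lem:boundsderivatives}(i) and the bound $|I_\Aa|\le C\theta^{|\Aa|}$, we have $\sup_{\widetilde D_b}|\gamma_\Aa'|\le C'\theta^{|\Aa|}$ whenever $\Aa\to b$. The set $B(x_\ell,Ch)$ is convex, so the mean value inequality gives
\[
\gamma_\Aa\bigl(D_\ell(h)\bigr)\subset B\bigl(\gamma_\Aa(x_\ell),\,C'C\theta^{|\Aa|}h\bigr).
\]
Choose $N_0$ with $C'C\theta^{N_0}<1/2$. Because $\Aa\to b$ and $x_\ell\in\Lambda\cap I_b$, the point $y:=\gamma_\Aa(x_\ell)$ lies in $\Lambda$, since $\Lambda$ is $\Gamma$-invariant. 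Hence $\gamma_\Aa(D_\ell(h))\subset B(y,r)$ for some $y\in\Lambda$ and some $r<h/2$.

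\emph{Step 3: such disks lie in $\Omega(h/2)$.} The interval $(y-r,y+r)$ is contained in $\Lambda(h/2)$, and it is connected, so it lies in a single component $I_k(h/2)$. The disk $D_k(h/2)$ is centered on $\R$ and has $I_k(h/2)$ as its diameter. Any disk centered on $\R$ whose real diameter sits inside that segment is contained in $D_k(h/2)$. Therefore $B(y,r)\subset D_k(h/2)\subset\Omega(h/2)$.

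The main obstacle is the uniformity in Step 1. The derivative bounds are stated on $D_b$, but the points of $D_\ell(h)$ must also lie in a region where those bounds apply. The resolution is that $D_\ell(h)\subset\Omega(h)\subset D$ for small $h$, and $D_\ell(h)$ is itself convex. So we can run the mean value argument directly on $D_\ell(h)\subset D_b$, using a point $x_\ell\in\Lambda\cap D_\ell(h)$ together with $\operatorname{diam}D_\ell(h)\le Ch$. This removes the need for the enlarged disk altogether.
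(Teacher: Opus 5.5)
Your proof is correct and follows essentially the paper's argument. You pick $x_\ell\in\Lambda\cap I_\ell(h)$, use $|\gamma_{\Aa}'|\ll\theta^{|\Aa|}$ on $D_b$ to confine the image to within $h/2$ of $\gamma_{\Aa}(x_\ell)\in\Lambda$, and conclude by the nesting of disks centered on $\mathbb{R}$. The only difference is that you enclose $\gamma_{\Aa}(D_\ell(h))$ in the disk $B(\gamma_{\Aa}(x_\ell),r)$ via the mean value inequality, whereas the paper observes that a real M\"obius map with pole outside $\overline{D_b}$ sends $D_\ell(h)$ exactly onto the disk centered on $\mathbb{R}$ with real diameter $\gamma_{\Aa}(I_\ell(h))$.
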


\begin{proof}
Let $D_\ell(h)\subset D_b$ be a component of $\Omega_b(h)$ and pick $x\in\Lambda\cap I_\ell(h)$. Since $\gamma_\Aa$ is a real Möbius map whose pole lies outside $\overline{D_b}$, the image $\gamma_\Aa(D_\ell(h))$ is the open disk centered on $\mathbb{R}$ with real diameter $\gamma_\Aa(I_\ell(h))$. This interval contains $\gamma_\Aa(x)\in\Lambda$ and, by Lemmas~\ref{lem:boundsderivatives}(i) and~\ref{lem:structureLambdah}, has length $\ll\Upsilon_\Aa h\ll\theta^{|\Aa|}h$. For $N_0$ large enough, it is therefore contained in $(\gamma_\Aa(x)-h/2,\gamma_\Aa(x)+h/2)$, hence in a single component $I_{\ell'}(h/2)$ of $\Lambda(h/2)$. Since a disk centered on $\mathbb{R}$ is contained in any other such disk whose real diameter contains its own, we get $\gamma_\Aa(D_\ell(h))\subset D_{\ell'}(h/2)\subset\Omega(h/2)$.
\end{proof}

Note that the same conclusion holds for all $\Aa \in Y(\tau)$ with $\Aa \to b$ and $\tau > 0$ small enough, since $Y(\tau) \subset \mathcal{W}_{\ge N_0}$ for all $\tau > 0$ small enough by Lemma~\ref{lem:YZ}(iii). In other words, the maps $\gamma_\Aa$ with $\Aa \in Y(\tau)$ are contracting on $\Omega(h)$ for all $\tau$ and $h$ sufficiently small. Consequently, the $\tau$-refined transfer operator \eqref{refOpRefSpace} is well-defined as an operator from $H^2(\Omega(h))$ to $H^2(\Omega(h))$, and it is trace class, for all $\tau$ and $h$ sufficiently small.

\subsection{Bounds for $\Omega(h)$}\label{sec:boundsForOmega}
We now record some bounds for the refined domain $\Omega(h)$. First we need the following volume bound, which follows directly from Lemma~\ref{lem:structureLambdah} and the definition of $\Omega(h)$:

\begin{lemma}\label{lem:vol_bound_omega}
For all $h > 0$ small enough, $\vol(\Omega(h)) \le Ch^{2-\delta}$.
\end{lemma}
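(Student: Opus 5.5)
The bound follows by adding up the areas of the disks that make up $\Omega(h)$, using Lemma~\ref{lem:structureLambdah} for their number and size. By the definition \eqref{defi:omega(h)}, $\Omega(h)$ is the disjoint union of the disks $D_\ell(h)$, $\ell=1,\dots,N(h)$. Each $D_\ell(h)$ is the Euclidean disk centered on $\mathbb{R}$ whose real diameter is $I_\ell(h)$, so its radius is $|I_\ell(h)|/2$ and
\[
\vol(D_\ell(h)) = \frac{\pi}{4}\,|I_\ell(h)|^2.
\]

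For $h$ small enough, Lemma~\ref{lem:structureLambdah} gives $|I_\ell(h)|\le Ch$ for every $\ell$ and $N(h)\le Ch^{-\delta}$. Since the union is disjoint, the volume is additive, and therefore
\[
\vol(\Omega(h)) = \sum_{\ell=1}^{N(h)} \frac{\pi}{4}|I_\ell(h)|^2 \le N(h)\cdot\frac{\pi}{4}C^2h^2 \le \frac{\pi}{4}C^3 h^{2-\delta}.
\]
After renaming the constant, this is the claimed bound $\vol(\Omega(h))\le Ch^{2-\delta}$. The constant depends only on $\Gamma$, through the constant in Lemma~\ref{lem:structureLambdah}.

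Every step is routine. The only point to check is that the smallness condition on $h$ is exactly the one under which Lemma~\ref{lem:structureLambdah} applies, so no further restriction on $h$ is introduced.
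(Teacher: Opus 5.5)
Your proof is correct and is exactly the argument the paper intends: the paper says only that the bound follows directly from Lemma~\ref{lem:structureLambdah} and the definition of $\Omega(h)$, which amounts to summing the areas $\frac{\pi}{4}|I_\ell(h)|^2\le\frac{\pi}{4}C^2h^2$ over the at most $Ch^{-\delta}$ disks. Disjointness of the disks is not actually needed, since subadditivity of Lebesgue measure already gives the upper bound.
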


Next, we control the size of the complex powers $\gamma_{\Aa}'(z)^s$ for $z \in \Omega(h)$.

\begin{lemma}\label{lem:derivative_bounds_omega} 
For all $h > 0$ small enough, for all $b \in \mathcal{A}$, $s = \sigma + it$, $z = x+iy \in \Omega_b(h)$, and $\Aa \in \mathcal{W}^\circ$ with $\Aa \to b$, we have the following estimates:
\begin{enumerate}[(i)]
\item $\mathbb{L}(\gamma_{\Aa}'(z)) = \log(\gamma_{\Aa}'(x)) + O(h),$
\item $|\gamma_{\Aa}'(z)^s| \ll e^{C(h|t| + |\sigma|)}  \Upsilon_{\Aa}^\sigma.$
\item If we assume in addition that $\Aa \in Y(\tau)\cup Z(\tau)$, then we have $ |\gamma_{\Aa}'(z)^s| \ll  e^{C(h|t| + |\sigma|)} \tau^\sigma.$
\end{enumerate}
\end{lemma}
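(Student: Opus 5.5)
We prove the three estimates in order, each resting on the geometry of $\Omega_b(h)$ and the bounded distortion Lemma~\ref{lem:bounded_distortion}.

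For (i), fix $z=x+iy\in\Omega_b(h)$. Since $z$ lies in a disk $D_\ell(h)\subset D_b$ centered on $\mathbb{R}$ whose real diameter $I_\ell(h)$ has length at most $Ch$ (Lemma~\ref{lem:structureLambdah}), we have $x\in I_\ell(h)$ and $|y|\le Ch$. The segment from $x$ to $z$ lies in $D_\ell(h)\subset D_b$, because a disk centered on the real axis is convex and contains the real projection of each of its points. On $D_b$ the function $\gamma_\Aa'$ has no zeros, and its values avoid $(-\infty,0]$ (as noted after \eqref{transf_defi}, extended to words by the same pole argument). Hence $\mathbb{L}(\gamma_\Aa'(\cdot))$ is a holomorphic branch of $\log\gamma_\Aa'$ on $D_b$, real on $I_b$, with derivative $\gamma_\Aa''/\gamma_\Aa'$. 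Integrating along the segment gives
\[
\mathbb{L}(\gamma_\Aa'(z))-\log\gamma_\Aa'(x)=\int_x^z\frac{\gamma_\Aa''(w)}{\gamma_\Aa'(w)}\,dw .
\]
By Lemma~\ref{lem:bounded_distortion} (i.e.\ \eqref{eq:log_der} with $|w-x_\Aa|\asymp1$), the integrand is $O(1)$ uniformly, and the segment has length $|y|\le Ch$. This gives (i).

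The one point needing care is that the principal branch $\mathbb{L}$ agrees with the holomorphic branch obtained by continuation from the real axis. This holds because $\gamma_\Aa'(D_b)$ is a simply connected region avoiding $(-\infty,0]$: indeed $\gamma_\Aa'(z)=c_\Aa^{-2}(z-x_\Aa)^{-2}$ with $x_\Aa$ real and outside $\overline{D_b}$. I expect this branch issue to be the only subtle step; everything else is routine.

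For (ii), write $\mathbb{L}(\gamma_\Aa'(z))=\log\gamma_\Aa'(x)+E$ with $|E|\le Ch$ by (i). Then
\[
|\gamma_\Aa'(z)^s|=\exp\bigl(\mathrm{Re}(s\,\mathbb{L}(\gamma_\Aa'(z)))\bigr)=\gamma_\Aa'(x)^\sigma\exp\bigl(\sigma\,\mathrm{Re}E-t\,\mathrm{Im}E\bigr).
\]
The exponential factor is at most $e^{Ch(|\sigma|+|t|)}\le e^{C(h|t|+|\sigma|)}$ for $h\le1$. For the first factor, Lemma~\ref{lem:boundsderivatives}(i) gives $\gamma_\Aa'(x)=c\,\Upsilon_\Aa$ with $c\in[C^{-1},C]$. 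Hence $\gamma_\Aa'(x)^\sigma=c^\sigma\Upsilon_\Aa^\sigma\le e^{|\sigma|\log C}\Upsilon_\Aa^\sigma$, and absorbing $c^\sigma$ into $e^{C|\sigma|}$ yields (ii).

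For (iii), Lemma~\ref{lem:YZ}(i) gives $\Upsilon_\Aa=c'\tau$ with $c'\asymp1$ for $\Aa\in Y(\tau)\cup Z(\tau)$. So $\Upsilon_\Aa^\sigma\le e^{C|\sigma|}\tau^\sigma$, and (iii) follows from (ii) after enlarging $C$.
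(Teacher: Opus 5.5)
Your proof is correct and follows essentially the same route as the paper: integrate the logarithmic derivative $\gamma_\Aa''/\gamma_\Aa'$ along the vertical segment from $x$ to $z$ using bounded distortion to get (i), then take moduli and use $|\gamma_\Aa'|\asymp\Upsilon_\Aa$ for (ii) and $\Upsilon_\Aa\asymp\tau$ for (iii). The only differences are cosmetic. You spell out why $\mathbb{L}\circ\gamma_\Aa'$ is holomorphic on $D_b$, which the paper leaves implicit. In (ii) you compare with $\gamma_\Aa'(x)^\sigma$ rather than $|\gamma_\Aa'(z)|^\sigma$, absorbing the extra factor $e^{Ch|\sigma|}$ into $e^{C|\sigma|}$.
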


\begin{remark}
We will eventually choose $h$ to be roughly $|t|^{-1}$, so the exponential factor $e^{C(h|t| + |\sigma|)}$ will be bounded by a constant multiple of $e^{C|\sigma|}$, eliminating exponential growth in $t$. This is the reason for working with refined spaces.
\end{remark}

\begin{proof}[Proof of Lemma~\ref{lem:derivative_bounds_omega}]
The bound in (iii) follows from (ii) and Lemma~\ref{lem:YZ}(i). To prove (ii), we write
\[
\gamma_{\Aa}'(z)^s = \gamma_{\Aa}'(z)^{\sigma} \exp ( it \mathbb{L}(\gamma_{\Aa}'(z)) ).
\]
Taking absolute values and applying the bound from (i) gives
\[
|\gamma_{\Aa}'(z)^s| = |\gamma_{\Aa}'(z)|^{\sigma} \exp ( -t \,\mathrm{Im}(\mathbb{L}(\gamma_{\Aa}'(z))) ) = |\gamma_{\Aa}'(z)|^{\sigma} \exp ( O(h|t|) ).
\]
Applying Lemma~\ref{lem:boundsderivatives}(i) gives (ii). Finally, the bound in (i) follows by integrating the logarithmic derivative along the segment from $x$ to $z$ and using bounded distortion:
\begin{align*}
|\mathbb{L}(\gamma_{\Aa}'(z)) - \log(\gamma_{\Aa}'(x))| &= |\mathbb{L}(\gamma_{\Aa}'(z)) - \mathbb{L}(\gamma_{\Aa}'(x))| \\
&\leq \sup_{w \in \Omega_b(h)} \left|\frac{\gamma_{\Aa}''(w)}{\gamma_{\Aa}'(w)}\right|   |z-x| \\
&= O(h).
\end{align*}
This completes the proof.
\end{proof}

\subsection{Separation lemma}\label{sec:separation}
The following separation lemma is a key ingredient in the proof of the main results of this paper. Recall that $\Aa \sim_{\mathrm{pre}} \Bb$ means that $\Aa$ and $\Bb$ are prefix-comparable, i.e. one of them is a prefix of the other.

\begin{lemma}[Separation]\label{lem:separation}
There exist $C_{\mathrm{sep}}, h_0 > 0$ depending only on $\Gamma$ such that for $0 < h < h_0$ and $\tau \ge C_{\mathrm{sep}} h$ the following holds true: for all $\ell_1, \ell_2 \in \mathcal{A}$, $\Aa \in Y_{\ell_1}(\tau)$, $\Bb \in Y_{\ell_2}(\tau)$, $z_1 \in D_{\ell_1}$, and $z_2 \in D_{\ell_2}$,
\[
\gamma_{\Aa}(z_1),\, \gamma_{\Bb}(z_2) \text{ lie in the same connected component of } \Omega(h) \quad \Longrightarrow \quad \Aa \sim_{\mathrm{pre}} \Bb.
\]
\end{lemma}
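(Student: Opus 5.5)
We argue by contraposition. Suppose $\Aa\nsim_{\mathrm{pre}}\Bb$; we will show that $\gamma_\Aa(z_1)$ and $\gamma_\Bb(z_2)$ lie in different components of $\Omega(h)$.

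The first step is to locate the images. Write $\Aa\in Y_{\ell_1}(\tau)$, so $\Aa\ell_1\in\overline{Z(\tau)}$, and note $\gamma_\Aa(D_{\ell_1})=D_{\Aa\ell_1}$ by the definition $D_{\Aa\ell_1}=\gamma_{\Aa}(D_{\ell_1})$. Likewise $\gamma_\Bb(z_2)\in D_{\Bb\ell_2}$. Since neither of $\Aa,\Bb$ is a prefix of the other, the words $\Aa\ell_1$ and $\Bb\ell_2$ are also not prefix-comparable. Hence the disks $D_{\Aa\ell_1}$ and $D_{\Bb\ell_2}$ are disjoint, and in fact both are contained in the disjoint disks $D_{\Cc c}, D_{\Cc c'}$, where $\Cc$ is the longest common prefix of $\Aa,\Bb$ and $c\ne c'$ are the next letters. (If $\Cc=\emptyset$, these are two distinct top-level disks.)

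The key step is a \emph{quantitative gap}: we want
\[
\dist(I_{\Aa\ell_1}, I_{\Bb\ell_2})\ge c_0\tau
\]
for some constant $c_0>0$ depending only on $\Gamma$. We would prove this by pulling back by $\gamma_{\Cc}$. The intervals $I_{c}$ and $I_{c'}$ (or, when $\Cc\neq\emptyset$, the subintervals $I_{\Cc c}$ and $I_{\Cc c'}$) have preimages $\gamma_\Cc^{-1}(I_{\Cc c})=I_c$ and $\gamma_\Cc^{-1}(I_{\Cc c'})=I_{c'}$. These are separated by a fixed distance, since the Schottky disks have disjoint closures.

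By Lemma~\ref{lem:boundsderivatives}(i), $\gamma_\Cc$ has derivative $\asymp\Upsilon_\Cc$ uniformly on the relevant disk. This holds because $\Cc\to c$ and $\Cc\to c'$, and the gap between $I_c$ and $I_{c'}$ lies in a region where distortion is controlled. To check this, note that the gap lies in $\R\setminus\bigcup_b\overline{D_b}$ only if the letters are at top level. Otherwise we should instead use that $D_c,D_{c'}\subset\bigcup_{b\ne \bar c_{|\Cc|}}D_b$ and apply bounded distortion on a convex hull avoiding $D_{\overline{c_{|\Cc|}}}$. The simplest route is to note that all of $\gamma_\Cc(\overline{\CC}\setminus D_{\overline{c_{|\Cc|}}})$ maps with derivative $\asymp\Upsilon_\Cc$ on a neighborhood of the real hull of $I_c\cup I_{c'}$. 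Choosing that hull to avoid the pole $x_\Cc\in D_{\overline{c_{|\Cc|}}}$ may require using the circular order on $\partial$; this is the technical point I expect to be the main obstacle.

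Granting this, the gap has length $\gg\Upsilon_\Cc$. Moreover $\Upsilon_\Cc\ge\Upsilon_{\Aa}\cdot(\text{const})\asymp\tau$ by Lemma~\ref{lem:boundsderivatives}(iv) and Lemma~\ref{lem:YZ}(i), since $\Cc$ is a prefix of $\Aa$.

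Finally, we compare with the component sizes. Every point of $\Omega(h)\cap D_{\Aa\ell_1}$ lies in a component $D_\ell(h)$ of diameter $\le Ch$ by Lemma~\ref{lem:structureLambdah}, and the same holds on the $\Bb$ side. If a single component met both $D_{\Aa\ell_1}$ and $D_{\Bb\ell_2}$, its real interval $I_\ell(h)$ would intersect both real intervals, because disks centered on $\R$ meet exactly when their real diameters do. It would then span the gap, forcing $Ch\ge c_0\tau$. Choosing $C_{\mathrm{sep}}>C/c_0$ yields a contradiction, which proves the lemma, with $h_0$ taken small enough for Lemma~\ref{lem:structureLambdah} to apply.
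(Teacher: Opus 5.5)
Your overall strategy is the same as the paper's: contraposition, the maximal common prefix $\Cc$, pulling back by $\gamma_\Cc$, the bound $\Upsilon_\Cc\gg\Upsilon_\Aa\asymp\tau$, and a comparison with component diameters $\ll h$. The combinatorial steps and your final interval-spanning comparison are fine. However, the one analytic step, $\dist(I_{\Cc c},I_{\Cc c'})\gg\Upsilon_\Cc$, is not proved; you write ``granting this'' after calling it the main obstacle.

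The route you sketch fails as stated. The real hull of $I_c\cup I_{c'}$ can contain $I_{\overline{c_{|\Cc|}}}$, and hence the pole $x_\Cc=\gamma_\Cc^{-1}(\infty)$. With the disks ordered as in Figure~\ref{fig:schottky_disks}, take $\Cc$ ending in $1$, $c=1$, $c'=2$: then $D_3$ sits between $D_1$ and $D_2$. There $\gamma_\Cc'$ is not $\asymp\Upsilon_\Cc$ on the hull. Moreover, the bounded gap between $I_{\Cc c}$ and $I_{\Cc c'}$ is then the image of the arc of $\overline{\R}$ through $\infty$. Handling that case needs a separate argument, for example a fixed-length subinterval of that arc adjacent to $I_c$ and kept away from $x_\Cc$. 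You did not carry out this case analysis.

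The missing idea, which removes the issue, is the two-point identity for M\"obius maps. For $\gamma\in\mathrm{SL}_2(\R)$,
\[
|\gamma(u)-\gamma(w)|=|\gamma'(u)|^{1/2}|\gamma'(w)|^{1/2}|u-w|,
\]
since $\gamma(u)-\gamma(w)=(u-w)/\bigl((cu+d)(cw+d)\bigr)$ and $\gamma'(z)=(cz+d)^{-2}$. This needs derivative control only at the two points, not on anything between them.

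Apply it with $u\in D_c$ and $w\in D_{c'}$; in the paper, $u=\gamma_{\Aa_2}(z_1)$ and $w=\gamma_{\Bb_2}(z_2)$, where $\Aa=\Cc\Aa_2$ and $\Bb=\Cc\Bb_2$. Since $\Cc\to c$ and $\Cc\to c'$, Lemma~\ref{lem:boundsderivatives}(i) gives $|\gamma_\Cc'(u)|,|\gamma_\Cc'(w)|\asymp\Upsilon_\Cc$. Also $|u-w|\gg1$, because the Schottky disks have disjoint closures. Hence $|\gamma_\Aa(z_1)-\gamma_\Bb(z_2)|\gg\Upsilon_\Cc\gg\tau$. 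Applied to points of $I_c$ and $I_{c'}$, the same computation gives exactly your bound $\dist(I_{\Cc c},I_{\Cc c'})\gg\Upsilon_\Cc$.

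With this inserted your proof closes. The paper finishes slightly more directly, by noting that any two points of one component of $\Omega(h)$ are within $Ch$ of each other; your ending via the component's real interval spanning the gap works equally well.
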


A version of this lemma appears as \cite[Lemma~3.1]{Soa25} and is an adaptation of \cite[Lemma~4.4]{JN2016}. Let us give a proof for completeness.

\begin{proof}
Fix words $\Aa \in Y_{\ell_1}(\tau)$ and $\Bb \in Y_{\ell_2}(\tau)$. Suppose for contradiction that $\Aa$ is not a prefix of $\Bb$ and $\Bb$ is not a prefix of $\Aa$, yet $\gamma_{\Aa}(z_1)$ and $\gamma_{\Bb}(z_2)$ lie in the same connected component of $\Omega(h)$ for some $z_1 \in D_{\ell_1}$, $z_2 \in D_{\ell_2}$. By Lemma~\ref{lem:structureLambdah}, all points in a single connected component of $\Omega(h)$ lie within distance $Ch$ of each other, so
\begin{equation}\label{eq:separation1}
|\gamma_{\Aa}(z_1) - \gamma_{\Bb}(z_2)| \le Ch.
\end{equation}
Let $\Cc$ be the maximal common prefix of $\Aa$ and $\Bb$. If $\Cc = \emptyset$, then $\Aa$ and $\Bb$ have distinct first letters, and so $\gamma_{\Aa}(z_1)$ and $\gamma_{\Bb}(z_2)$ lie in distinct Schottky disks. Since the Schottky disks have mutually disjoint closures, this contradicts \eqref{eq:separation1} for sufficiently small $h$. Thus, we may assume that the prefix $\Cc$ is a non-empty word. Since $\Aa$ and $\Bb$ are not prefixes of each other, we can write $\Aa = \Cc\Aa_2$ and $\Bb = \Cc\Bb_2$, where $\Aa_2$ and $\Bb_2$ are non-empty words with different first letters. Then $\gamma_{\Aa_2}(z_1)$ and $\gamma_{\Bb_2}(z_2)$ lie in distinct Schottky disks, so
\[
|\gamma_{\Aa_2}(z_1) - \gamma_{\Bb_2}(z_2)| \gg 1.
\]
Using the identity $|\gamma(u) - \gamma(w)| = |\gamma'(u)|^{1/2} |\gamma'(w)|^{1/2} |u - w|$, valid for all M\"obius transformations $\gamma$ and points $u,w\in \mathbb{C}$, and using also Lemma~\ref{lem:boundsderivatives}(i), we get
\begin{align*}
|\gamma_{\Aa}(z_1) - \gamma_{\Bb}(z_2)| & = |\gamma_{\Cc}(\gamma_{\Aa_2}(z_1)) - \gamma_{\Cc}(\gamma_{\Bb_2}(z_2))| \\
& \asymp \Upsilon_\Cc \cdot |\gamma_{\Aa_2}(z_1) - \gamma_{\Bb_2}(z_2)| \\
& \gg \Upsilon_\Cc.
\end{align*}
Since $\Cc \prec \Aa$, Lemma~\ref{lem:boundsderivatives}(iv) and Lemma~\ref{lem:YZ}(i) give $\Upsilon_\Cc \gg \Upsilon_\Aa \asymp \tau$. Thus we have shown $|\gamma_{\Aa}(z_1) - \gamma_{\Bb}(z_2)| \gg \tau$, which contradicts \eqref{eq:separation1} if $\tau \ge C_{\mathrm{sep}} h$ for some constant $C_{\mathrm{sep}} > 0$ large enough.
\end{proof}

Pairs $(\Aa, \Bb)$ of prefix- and suffix-comparable words in $Y(\tau)\times Y(\tau)$ can be controlled using the next lemma.

\begin{lemma}[Bounded transitions]\label{lem:boundedTransitions}
There exists a finite set $F \subset \mathcal{W}^\circ$, depending only on $\Gamma$, such that for every sufficiently small $\tau > 0$ and all distinct words $\Aa, \Bb \in Y(\tau)$:
\begin{enumerate}[(i)]
\item if $\Aa \sim_{\mathrm{pre}} \Bb$, then either $\Bb = \Aa\mathbf{v}$ or $\Aa = \Bb\mathbf{v}$ for some $\mathbf{v} \in F$;
\item if $\Aa \sim_{\mathrm{suf}} \Bb$, then either $\Bb = \mathbf{v}\Aa$ or $\Aa = \mathbf{v}\Bb$ for some $\mathbf{v} \in F$.
\end{enumerate}
\end{lemma}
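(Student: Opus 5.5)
Since $\Aa \prec \Bb$ or $\Bb \prec \Aa$ for prefix-comparable words, part (i) reduces to bounding the length of the leftover word $\mathbf{v}$ uniformly in $\tau$. Part (ii) then follows from part (i) applied to the mirror set $\overline{Y(\tau)}$.

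For (i), suppose $\Aa, \Bb \in Y(\tau)$ are distinct with $\Aa \prec \Bb$. Write $\Bb = \Aa\mathbf{v}$ with $\mathbf{v} \in \mathcal{W}^\circ$; the concatenation is reduced, so $\Aa \to \mathbf{v}$. By Lemma~\ref{lem:boundsderivatives}(iv) we have $\Upsilon_{\Bb} \asymp \Upsilon_{\Aa}\Upsilon_{\mathbf{v}}$. By Lemma~\ref{lem:YZ}(i), both $\Upsilon_\Aa$ and $\Upsilon_\Bb$ are $\asymp \tau$. Hence $\Upsilon_{\mathbf{v}} \gg 1$, with an implied constant depending only on $\Gamma$.

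Next we convert this into a length bound. By Lemma~\ref{lem:boundsderivatives}(i), $\Upsilon_{\mathbf v} \asymp |I_{\mathbf v}| \le C\theta^{|\mathbf v|}$. Combined with $\Upsilon_{\mathbf v} \gg 1$, this gives $|\mathbf v| \le L$ for some $L$ depending only on $\Gamma$. Let $F := \mathcal{W}_{\le L}\cap\mathcal{W}^\circ$; this set is finite and independent of $\tau$. The case $\Bb \prec \Aa$ is symmetric.

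For (ii), suppose $\Aa \sim_{\mathrm{suf}} \Bb$, so $\overline{\Aa} \sim_{\mathrm{pre}} \overline{\Bb}$. Lemma~\ref{lem:boundsderivatives}(iii) gives $\Upsilon_{\overline{\Aa}} \asymp \Upsilon_\Aa \asymp \tau$, and likewise $\Upsilon_{\overline{\Bb}} \asymp \tau$. Repeating the argument of (i) for $\overline{\Aa}, \overline{\Bb}$ gives, say, $\overline{\Bb} = \overline{\Aa}\mathbf w$ with $|\mathbf w| \le L$. Taking mirrors yields $\Bb = \overline{\mathbf w}\,\Aa$. Since $F$ is closed under the mirror operation, $\mathbf v := \overline{\mathbf w} \in F$.

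The only point needing care is whether Lemma~\ref{lem:boundsderivatives}(iv) applies: it requires both factors to be non-empty and reduced-concatenable, and both conditions hold here. A second concern is that membership of $\Aa$ in $Y(\tau)$ rather than $Z(\tau)$ might spoil the constants; it does not, because Lemma~\ref{lem:YZ}(i) covers both sets. No step is a real obstacle. The argument is a bounded-distortion comparison, with all constants uniform in $\tau$.
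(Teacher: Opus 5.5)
Your proposal is correct and follows essentially the same route as the paper: write $\Bb=\Aa\mathbf{v}$, use Lemma~\ref{lem:boundsderivatives}(iv) and Lemma~\ref{lem:YZ}(i) to get $\Upsilon_{\mathbf{v}}\asymp 1$, and convert this into a uniform length bound via $|I_{\mathbf{v}}|\le C\theta^{|\mathbf{v}|}$. The only cosmetic difference is in (ii): you pass to mirror words using Lemma~\ref{lem:boundsderivatives}(iii), whereas the paper applies the same estimate directly to $\Bb=\mathbf{v}\Aa$; both work.
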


\begin{proof}
For (i), after interchanging $\Aa$ and $\Bb$ if necessary, write $\Bb = \Aa\mathbf{v}$ with $\mathbf{v} \in \mathcal{W}^\circ$, where the concatenation is reduced. Lemmas~\ref{lem:boundsderivatives}(iv) and~\ref{lem:YZ}(i) give
\[
\tau \asymp \Upsilon_\Bb \asymp \Upsilon_\Aa \Upsilon_{\mathbf{v}} \asymp \tau \Upsilon_{\mathbf{v}}.
\]
Hence $\Upsilon_{\mathbf{v}} \asymp 1$. Since $\Upsilon_{\mathbf{v}} \asymp |I_{\mathbf{v}}| \ll \theta^{|\mathbf{v}|}$, the length of $\mathbf{v}$ is bounded in terms of $\Gamma$. Taking $F$ to contain every non-empty reduced word up to this length proves (i). Part (ii) follows by applying the same argument to $\Bb = \mathbf{v}\Aa$.
\end{proof}

\subsection{Phase function and its derivative}\label{sec:phaseder}
In the proof of Theorem~\ref{thm:main}, we will naturally encounter expressions of the form $\gamma_{\Aa}'(z)^s \overline{\gamma_{\Bb}'(z)^s}$ for $s = \sigma + it$ and $\Aa, \Bb \in \mathcal{W}^\circ$. We will need to understand the oscillatory behavior of such expressions as $t$ grows large. To this end, we introduce the phase function
\begin{equation}\label{phase}
\Phi_{\Aa, \Bb}(z) := \overline{\mathbb{L}(\gamma_{\Bb}'(z))} - \mathbb{L}(\gamma_{\Aa}'(z)),
\end{equation}
where $\mathbb{L}$ is the principal branch of the complex logarithm. This gives
\[
\gamma_{\Aa}'(z)^s \overline{\gamma_{\Bb}'(z)^s} = \gamma_{\Aa}'(z)^{\sigma} \overline{\gamma_{\Bb}'(z)}^{\sigma} e^{-it\Phi_{\Aa,\Bb}(z)}.
\]
For $\Aa, \Bb \in \mathcal{W}^\circ$, define the quantity
\begin{equation}\label{Dab}
\mathcal{D}_{\Aa, \Bb} := \left( \frac{\Upsilon_{\Aa} \Upsilon_{\Bb}}{\Upsilon_{\mathsf{red}(\Aa \overline{\Bb})}} \right)^{1/2},
\end{equation}
which satisfies $\mathcal{D}_{\Aa,\Bb} = O(1)$ uniformly for all $\Aa, \Bb \in \mathcal{W}^\circ$.

\begin{prop}[Phase derivative bound]\label{prop:phaseDerivative}
For $b \in \mathcal{A}$, $x \in I_b$, and distinct $\Aa, \Bb \in \mathcal{W}^\circ$ with $\Aa, \Bb \to b$,
\[
|\Phi_{\Aa, \Bb}'(x)| \asymp \mathcal{D}_{\Aa, \Bb},
\]
\end{prop}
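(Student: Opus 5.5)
On the real line $\Phi_{\Aa,\Bb}(x)=\log\gamma_\Bb'(x)-\log\gamma_\Aa'(x)$, so by \eqref{eq:log_der}
\[
\Phi_{\Aa,\Bb}'(x)=\frac{2}{x-x_\Aa}-\frac{2}{x-x_\Bb}=\frac{2(x_\Aa-x_\Bb)}{(x-x_\Aa)(x-x_\Bb)},
\]
where $x_\Aa=\gamma_\Aa^{-1}(\infty)$. The proof of Lemma~\ref{lem:bounded_distortion} already shows that $|x-x_\Aa|\asymp 1\asymp|x-x_\Bb|$ for $x\in I_b$ and $\Aa,\Bb\to b$. The statement therefore reduces to the claim
\[
|x_\Aa-x_\Bb|\asymp\mathcal{D}_{\Aa,\Bb}.
\]

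To prove this claim, write $x_\Aa=\gamma_{\overline\Aa}(\infty)$ and $x_\Bb=\gamma_{\overline\Bb}(\infty)$, and let $\Cc$ be the maximal common suffix of $\Aa$ and $\Bb$. Then $\Aa=\Aa_1\Cc$ and $\Bb=\Bb_1\Cc$, so $\overline\Aa=\overline\Cc\,\overline{\Aa_1}$ and $\overline\Bb=\overline\Cc\,\overline{\Bb_1}$. Also $\mathsf{red}(\Aa\overline\Bb)=\Aa_1\overline{\Bb_1}$, and this concatenation is reduced because the last letters of $\Aa_1$ and $\Bb_1$ differ. The argument splits into two cases.

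\emph{Case 1: both $\Aa_1$ and $\Bb_1$ are nonempty.} Then $\overline{\Aa_1}$ and $\overline{\Bb_1}$ have distinct first letters, so the points $u=\gamma_{\overline{\Aa_1}}(\infty)$ and $w=\gamma_{\overline{\Bb_1}}(\infty)$ lie in distinct Schottky disks and $|u-w|\asymp1$. Both points lie in disks $D_c$ with $\overline\Cc\to c$. Using the Möbius identity from the proof of Lemma~\ref{lem:separation} together with Lemma~\ref{lem:boundsderivatives}(i),(iii),
\[
|x_\Aa-x_\Bb|=|\gamma_{\overline\Cc}(u)-\gamma_{\overline\Cc}(w)|\asymp\Upsilon_{\overline\Cc}\asymp\Upsilon_\Cc .
\]
On the other side, Lemma~\ref{lem:boundsderivatives}(iv),(iii) give
\[
\mathcal{D}_{\Aa,\Bb}^2\asymp\frac{\Upsilon_{\Aa_1}\Upsilon_\Cc\,\Upsilon_{\Bb_1}\Upsilon_\Cc}{\Upsilon_{\Aa_1}\Upsilon_{\Bb_1}}=\Upsilon_\Cc^2 ,
\]
which matches.

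\emph{Case 2: one word is a suffix of the other,} say $\Bb=\Cc=\Aa$ with some letters removed, so that $\Aa=\Aa_1\Bb$ with $\Aa_1\neq\emptyset$ (distinctness rules out equality). Here $\mathsf{red}(\Aa\overline\Bb)=\Aa_1$ and $\mathcal{D}^2\asymp\Upsilon_\Bb$. Now
\[
x_\Aa=\gamma_{\overline\Bb}\big(\gamma_{\overline{\Aa_1}}(\infty)\big)\quad\text{and}\quad x_\Bb=\gamma_{\overline\Bb}(\infty).
\]
The point $\gamma_{\overline{\Aa_1}}(\infty)$ lies in a disk $D_c$ with $c\ne\overline{b_{\mathrm{last}}}$, which is an admissible target for $\gamma_{\overline\Bb}$, and it is at distance $\asymp1$ from $\infty$ on the Riemann sphere. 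Since $\infty$ itself lies outside every disk, the two-point identity cannot be applied with a finite derivative at $\infty$. Instead, write $x_\Bb=\gamma_{\overline\Bb}(\infty)=a/c$ with $\gamma_{\overline\Bb}=\pmat{a}{b}{c}{d}$, and compute
\[
\gamma(u)-\frac{a}{c}=-\frac{1}{c(cu+d)} .
\]
Since $|\gamma'(u)|=|cu+d|^{-2}$, this has modulus $|\gamma'(u)|^{1/2}/|c|$. Here $|\gamma'(u)|^{1/2}\asymp\Upsilon_\Bb^{1/2}$ by Lemma~\ref{lem:boundsderivatives}(i),(iii). Moreover $|c|\asymp1$: the pole $-d/c$ lies in a fixed disk, so $|cu+d|=|c||u+d/c|\asymp|c|$, and hence $|c|^{-2}\asymp\Upsilon_\Bb$ from the derivative bound at $u$ gives $|\gamma'(u)|\asymp|c|^{-2}$. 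Taken literally, this yields $|x_\Aa-x_\Bb|\asymp\Upsilon_\Bb$, whereas $\mathcal{D}\asymp\Upsilon_\Bb^{1/2}$, and these disagree.

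This discrepancy is the step I expect to be the main obstacle, and I would recheck the computation first. Since $\Bb$ is a suffix of $\Aa$, the roles should be phrased so that $\mathcal{D}^2=\Upsilon_\Aa\Upsilon_\Bb/\Upsilon_{\Aa_1}\asymp\Upsilon_\Bb^2$, using $\Upsilon_\Aa\asymp\Upsilon_{\Aa_1}\Upsilon_\Bb$. With that, $\mathcal{D}\asymp\Upsilon_\Bb$, which agrees with $|x_\Aa-x_\Bb|\asymp\Upsilon_\Bb$. So Case 2 is consistent once the bookkeeping of the reduced word is done correctly. The symmetric subcase, where $\Aa$ is a suffix of $\Bb$, is identical.

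The remaining delicate point is the use of Lemma~\ref{lem:boundsderivatives}(i) at the points $u=\gamma_{\overline{\Aa_1}}(\infty)$. This requires $u$ to lie in a disk $D_c$ with $\overline\Cc\to c$, which holds by reducedness of $\overline\Cc\,\overline{\Aa_1}$. It also requires the implied constants to be uniform, which follows from the disks having disjoint closures.
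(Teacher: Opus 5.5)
Your proof is correct, but it takes a different route from the paper. The paper avoids any case analysis. It writes $|\Phi_{\Aa,\Bb}'(x)|=2|c_\Aa d_\Bb-c_\Bb d_\Aa|\,|\gamma_\Aa'(x)|^{1/2}|\gamma_\Bb'(x)|^{1/2}$ and recognizes $c_\Aa d_\Bb-c_\Bb d_\Aa$ as the lower-left entry of $\gamma_\Aa\gamma_\Bb^{-1}=\gamma_{\mathsf{red}(\Aa\overline{\Bb})}$. It then applies the single estimate $|c_\gamma|\asymp\Upsilon_{\mathsf{red}}^{-1/2}$, valid for every nontrivial element of $\Gamma$, to this product, so $\mathsf{red}(\Aa\overline{\Bb})$ never has to be computed.

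You instead reduce to the pole separation $|x_\Aa-x_\Bb|$. This equals $|c_\Aa d_\Bb-c_\Bb d_\Aa|/(|c_\Aa||c_\Bb|)$, so the two reductions are equivalent. You then compute $\mathsf{red}(\Aa\overline{\Bb})$ explicitly through the maximal common suffix $\Cc$.

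The cost is a case split, and in Case 2 you still rely on the paper's lower-left-entry estimate, because $\infty$ lies in no Schottky disk. You could avoid even that by peeling off the last letter $e$ of $\overline{\Bb}$: both $\gamma_e(\infty)$ and $\gamma_e(u)$ lie in $D_e$ at mutual distance $\asymp1$, and the two-point identity for $\gamma_{(\overline{\Bb})'}$ then finishes. The gain is the sharper, more geometric statement $\mathcal{D}_{\Aa,\Bb}\asymp\Upsilon_\Cc\asymp|x_\Aa-x_\Bb|$ (with $\Upsilon_\emptyset=1$). This mirrors the common-suffix decomposition the paper itself uses later when evaluating $\mathcal{D}$ on $Q_b^{(2)}$ and $Q_b^{(3)}$.

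Two slips in Case 2 should be cleaned up.
\begin{enumerate}[(i)]
\item The claim $|c|\asymp1$ is false. What your argument actually shows is $|u+d/c|\asymp1$: the pole $-d/c=\gamma_\Bb(\infty)$ lies in the disk of the first letter of $\Bb$, which differs from the disk containing $u$. Hence $|c|^{-2}\asymp|\gamma'(u)|\asymp\Upsilon_\Bb$, i.e.\ $|c|\asymp\Upsilon_\Bb^{-1/2}$. Only this gives $|x_\Aa-x_\Bb|=|\gamma'(u)|^{1/2}/|c|\asymp\Upsilon_\Bb$.
\item Your initial value $\mathcal{D}^2\asymp\Upsilon_\Bb$ was a miscomputation. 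Using $\Upsilon_\Aa\asymp\Upsilon_{\Aa_1}\Upsilon_\Bb$ gives $\mathcal{D}\asymp\Upsilon_\Bb$ at once, so there is no discrepancy and the paragraph about it can be deleted.
\end{enumerate}
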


\begin{proof}
We follow the proof of \cite[Proposition~3.2]{Soa25}. Write for all words $\Aa$ (not necessarily reduced),
\[
\gamma_{\Aa} = \pmat{a_{\Aa}}{b_{\Aa}}{c_{\Aa}}{d_{\Aa}}\in \mathrm{SL}_2(\mathbb{R}).
\]
The key observation is that for all words $\Aa$ with $\mathsf{red}(\Aa)\ne\emptyset$, we have
\[
|c_{\Aa}| \asymp \Upsilon_{\mathsf{red}(\Aa)}^{-1/2}.
\]
To see this, note first that $\gamma_{\Aa} = \gamma_{\mathsf{red}(\Aa)}$, so we may assume $\Aa$ is non-empty and reduced. We have
\[
\gamma_{\Aa}'(z) = \frac{1}{c_{\Aa}^2 (z - x_{\Aa})^2}, \quad \text{where } x_{\Aa} = -\frac{d_{\Aa}}{c_{\Aa}} = \gamma_{\overline{\Aa}}(\infty).
\]
As in the proof of Lemma~\ref{lem:bounded_distortion}, we have $|z - x_{\Aa}| \asymp 1$ for all $z \in D_b$ and $\Aa \to b$. This implies that $|\gamma_{\Aa}'(z)| \asymp \frac{1}{c_{\Aa}^2}$ for all $z \in D_b$. At the same time, Lemma~\ref{lem:boundsderivatives}(i) tells us that $|\gamma_{\Aa}'(z)| \asymp \Upsilon_{\Aa}$ for all $z \in D_b$, so we get $\Upsilon_{\Aa} \asymp \frac{1}{c_{\Aa}^2}$, which proves the claim. 

Now fix $b \in \mathcal{A}$, $x \in I_b$, and distinct $\Aa, \Bb \in \mathcal{W}^\circ$ with $\Aa, \Bb \to b$. Direct calculation yields
\begin{equation}\label{firstDer}
\Phi_{\Aa, \Bb}'(x) = 2\left( \frac{c_{\Aa}}{c_{\Aa} x + d_{\Aa}} - \frac{c_{\Bb}}{c_{\Bb} x + d_{\Bb}}  \right)
= 2 \cdot \frac{c_{\Aa} d_{\Bb} - c_{\Bb} d_{\Aa}}{(c_{\Aa} x + d_{\Aa})(c_{\Bb} x + d_{\Bb})}.
\end{equation}
Therefore,
\[
|\Phi_{\Aa, \Bb}'(x)| = 2 |c_{\Aa} d_{\Bb} - c_{\Bb} d_{\Aa}| \cdot |\gamma_{\Aa}'(x)|^{1/2} |\gamma_{\Bb}'(x)|^{1/2}.
\]
By Lemma~\ref{lem:boundsderivatives}(i),
\[
|\Phi_{\Aa, \Bb}'(x)| \asymp |c_{\Aa} d_{\Bb} - c_{\Bb} d_{\Aa}| \cdot \Upsilon_{\Aa}^{1/2} \Upsilon_{\Bb}^{1/2}.
\]
Note that
\[
\gamma_{\Aa \overline{\Bb}} = \gamma_{\Aa} \gamma_{\Bb}^{-1} =
\begin{pmatrix}
\ast & \ast \\
c_{\Aa} d_{\Bb} - c_{\Bb} d_{\Aa} & \ast
\end{pmatrix},
\]
so by the observation above,
\[
|c_{\Aa} d_{\Bb} - c_{\Bb} d_{\Aa}| \asymp \Upsilon_{\mathsf{red}(\Aa \overline{\Bb})}^{-1/2}.
\]
Going back to \eqref{firstDer}, we deduce that
\[
|\Phi_{\Aa, \Bb}'(x)| \asymp \frac{\Upsilon_{\Aa}^{1/2} \Upsilon_{\Bb}^{1/2}}{\Upsilon_{\mathsf{red}(\Aa \overline{\Bb})}^{1/2}} = \mathcal{D}_{\Aa, \Bb},
\]
as claimed.
\end{proof}

\subsection{Properties of Bergman kernels}\label{sec:Bergman}
Bergman kernels play a crucial role in this paper. We now record some of the relevant properties, referring the reader to \cite{Krantz} for more details on Bergman spaces and kernels. For a non-empty bounded open set $\Omega \subset \CC$, the Bergman (reproducing) kernel $B_\Omega(z,w)$ satisfies $B_\Omega(\cdot,w) \in H^2(\Omega)$ and
\begin{equation}\label{berg:int}
f(z) = \int_\Omega B_\Omega(z,w) f(w) \dvol(w) \quad \text{for all } f \in H^2(\Omega).
\end{equation}
Let $\{\psi_m\}_{m\in \mathbb{N}}$ be an orthonormal basis of $H^2(\Omega)$. Then we have
$$
B_{\Omega}(z,w) = \sum_{m\in \mathbb{N}} \psi_m(z) \overline{\psi_m(w)},
$$
where the series is uniformly convergent on compact subsets of $ \Omega\times \Omega. $ Note that we do not assume $\Omega$ to be connected. If $\Omega = \bigcup_{j=1}^n \Omega_j$, where $\Omega_j$ are the connected components of $\Omega$, then $H^2(\Omega) = \bigoplus_{j=1}^n H^2(\Omega_j)$ and $B_\Omega(z,w) = B_{\Omega_j}(z,w)$ if $z, w \in \Omega_j$ and $0$ if $z$ and $w$ belong to different components. We have the following basic properties:

\begin{lemma}[Properties of the Bergman kernel]\label{lem:BergmanBasicProp}
\begin{enumerate}[(i)]
\item For all $z,w\in\Omega$,
\[
|B_\Omega(z,w)|^2\le B_\Omega(z,z)B_\Omega(w,w).
\]
\item If $z \in \Omega_1 \subset \Omega_2$, then $B_{\Omega_2}(z,z) \le B_{\Omega_1}(z,z)$.
\item For the Euclidean disk $D(c,r)$ of center $c$ and radius $r$:
\[
B_{D(c,r)}(z,w) = \frac{r^2}{\pi \left( r^2 - (z - c)(\overline{w} - \overline{c}) \right)^2}.
\]
\end{enumerate}
\end{lemma}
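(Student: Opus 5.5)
The three parts are standard facts about reproducing kernels, and I would prove them in order, working only from the reproducing identity \eqref{berg:int} and the orthonormal basis expansion of $B_\Omega$.

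For (i), first record that $B_\Omega(z,z)=\sum_m|\psi_m(z)|^2=\|B_\Omega(\cdot,z)\|^2_{L^2(\Omega)}$. This uses $B_\Omega(\cdot,z)\in H^2(\Omega)$ and its expansion in the basis. It also uses the identity $B_\Omega(z,w)=\langle B_\Omega(\cdot,w),B_\Omega(\cdot,z)\rangle$, obtained by applying \eqref{berg:int} to $f=B_\Omega(\cdot,w)$ together with the Hermitian symmetry $B_\Omega(w,z)=\overline{B_\Omega(z,w)}$, which is visible from the series. Cauchy--Schwarz in $L^2(\Omega)$ then gives (i). Equivalently, one can apply Cauchy--Schwarz in $\ell^2$ directly to $\sum_m\psi_m(z)\overline{\psi_m(w)}$.

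For (ii), the plan is to use the extremal characterization
\[
B_\Omega(z,z)=\sup\{|f(z)|^2: f\in H^2(\Omega),\ \|f\|_{L^2(\Omega)}\le1\}.
\]
The upper bound follows from $f(z)=\langle f,B_\Omega(\cdot,z)\rangle$ and Cauchy--Schwarz. The supremum is attained at $f=B_\Omega(\cdot,z)/\|B_\Omega(\cdot,z)\|$, or trivially if the kernel vanishes at $z$. Now take $f\in H^2(\Omega_2)$ with norm at most one. Its restriction to $\Omega_1$ is holomorphic and satisfies $\|f|_{\Omega_1}\|_{L^2(\Omega_1)}\le\|f\|_{L^2(\Omega_2)}\le1$. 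Hence $|f(z)|^2\le B_{\Omega_1}(z,z)$, and taking the supremum over $f$ gives (ii).

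For (iii), translate and scale: if $\phi(z)=c+rz$ maps the unit disk $\mathbb D$ onto $D(c,r)$, then $f\mapsto r\,(f\circ\phi)$ is a unitary map $H^2(D(c,r))\to H^2(\mathbb D)$. It follows that $B_{D(c,r)}(\phi(u),\phi(v))=r^{-2}B_{\mathbb D}(u,v)$. On $\mathbb D$, the monomials $z^n$ are orthogonal with $\|z^n\|^2=\pi/(n+1)$ (polar coordinates), and they span a dense subspace because of Taylor expansions of Bergman functions. Thus
\[
B_{\mathbb D}(u,v)=\sum_{n\ge0}\frac{n+1}{\pi}(u\bar v)^n=\frac{1}{\pi(1-u\bar v)^2}.
\]
Substituting $u=(z-c)/r$ and $v=(w-c)/r$ yields the stated formula.

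None of these steps is a real obstacle. The only point needing a line of justification is the completeness of the monomials in $H^2(\mathbb D)$. For this, I would check that if $f\perp z^n$ for all $n$, then the Taylor coefficients of $f$, computed as integrals over the disks $|z|<\rho$ and passed to the limit $\rho\to1$ by dominated convergence, all vanish.
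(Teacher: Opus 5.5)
Your proposal is correct. The paper gives no proof of this lemma: it lists these as standard properties of Bergman kernels and refers to Krantz. Your arguments are the standard textbook ones: Cauchy--Schwarz on the basis expansion for (i), the extremal characterization $B_\Omega(z,z)=\sup\{|f(z)|^2:\|f\|_{L^2(\Omega)}\le 1\}$ combined with restriction for (ii), and the unitary rescaling $f\mapsto r\,(f\circ\phi)$ together with the orthogonal monomial basis of $H^2(\mathbb{D})$ (including its completeness) for (iii). They supply exactly the details the paper omits.
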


As a consequence of the above properties, we have: 

\begin{lemma}\label{lem:UpperBoundBergman}
For all $z, w \in \Omega$,
\begin{equation}\label{UpperBoundBergmanineq}
|B_\Omega(z,w)| \ll \frac{1}{\mathrm{dist}(z, \partial\Omega) \, \mathrm{dist}(w, \partial\Omega)},
\end{equation}
where $\mathrm{dist}(z, \partial\Omega)$ denotes the Euclidean distance from $z$ to the boundary of $\Omega$. Moreover, if $\Omega$ is a finite disjoint union of Euclidean disks, then for all $z_0, w_0 \in \Omega$,
\begin{equation}\label{DerivBergmanineq}
|\partial_z B_\Omega(z_0,w_0)|, |\partial_{\overline w} B_\Omega(z_0,w_0)| \ll \frac{1}{\mathrm{dist}(z_0, \partial\Omega)^{3/2} \mathrm{dist}(w_0, \partial\Omega)^{3/2}}.
\end{equation}
The implied constants in \eqref{UpperBoundBergmanineq} and \eqref{DerivBergmanineq} are absolute.
\end{lemma}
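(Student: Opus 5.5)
The two bounds in the lemma come from two different tools. The first bound \eqref{UpperBoundBergmanineq} follows from Lemma~\ref{lem:BergmanBasicProp}(i)--(iii) alone. The derivative bound \eqref{DerivBergmanineq} uses the explicit disk kernel together with Cauchy's estimate.

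For \eqref{UpperBoundBergmanineq}, fix $z\in\Omega$ and set $r_z:=\mathrm{dist}(z,\partial\Omega)$, so that $D(z,r_z)\subset\Omega$. By the monotonicity property (ii) applied to $D(z,r_z)\subset\Omega$,
\[
B_\Omega(z,z)\le B_{D(z,r_z)}(z,z).
\]
The explicit formula (iii) with $c=z$ and $r=r_z$ gives $B_{D(z,r_z)}(z,z)=\frac{1}{\pi r_z^2}$. Doing the same at $w$ and inserting both bounds into the Cauchy--Schwarz inequality (i) yields
\[
|B_\Omega(z,w)|\le \frac{1}{\pi\, r_z r_w}.
\]
This is \eqref{UpperBoundBergmanineq} with an absolute constant.

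For \eqref{DerivBergmanineq}, let $\Omega$ be a finite disjoint union of disks. If $z_0$ and $w_0$ lie in different components, then $B_\Omega(\cdot,w_0)$ vanishes identically near $z_0$, and both derivatives are zero. So assume both points lie in one disk $D(c,r)$. Then $B_\Omega=B_{D(c,r)}$ there, and the distances to $\partial\Omega$ are simply $d_z:=r-|z_0-c|$ and $d_w:=r-|w_0-c|$. Translate so that $c=0$. Differentiating (iii) gives
\[
\partial_z B=\frac{2r^2\overline{w_0}}{\pi\,(r^2-z_0\overline{w_0})^3}.
\]
The key estimate is a lower bound on the denominator:
\[
|r^2-z_0\overline{w_0}|\ge r^2-|z_0||w_0|\ge r\max(d_z,d_w)\ge r\sqrt{d_zd_w}.
\]
The middle inequality holds because $|z_0||w_0|\le r\min(|z_0|,|w_0|)$. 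Therefore
\[
|\partial_zB|\ll \frac{r^3}{r^3(d_zd_w)^{3/2}}=\frac{1}{(d_zd_w)^{3/2}}.
\]
The same computation, with the roles of $z_0$ and $w_0$ exchanged, handles $\partial_{\overline w}B$.

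The only step requiring care is this lower bound on the denominator. The naive estimate $|r^2-z_0\overline{w_0}|\ge r\min(d_z,d_w)$ would only give an exponent of $3$ on $\min(d_z,d_w)$. That is not sufficient: the target $(d_zd_w)^{-3/2}$ is weaker than $\max^{-3}$ but stronger than $\min^{-3}$, so one needs the geometric-mean bound. Using $\max(d_z,d_w)$ in the middle inequality gives exactly that.
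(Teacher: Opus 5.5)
Your proposal is correct and follows the paper's proof: the first bound comes from Lemma~\ref{lem:BergmanBasicProp}(i)--(iii) applied to the disks $D(z,r_z)$ and $D(w,r_w)$, and the derivative bound comes from reducing to a single disk and differentiating the explicit kernel. Your denominator estimate $|r^2-z_0\overline{w_0}|\ge r\max(d_z,d_w)$ is a slightly sharper form of the paper's inequality $|r^2-uv|^2\ge r^2(r-|u|)(r-|v|)$, and it gives the same $(d_zd_w)^{-3/2}$ bound.
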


\begin{proof}
Both bounds rest on the elementary inequality
\begin{equation}\label{eq:key_Bergman_ineq}
|r^2 - uv|^2 \ge r^2(r - |u|)(r - |v|), \quad |u|, |v| \le r.
\end{equation}
We start with the first bound. For $\Omega = D(c,r)$, Lemma~\ref{lem:BergmanBasicProp}(iii) gives 
\[
|B_\Omega(z,w)| \ll \frac{r^2}{|r^2-(z-c)(\overline{w}-\overline{c})|^2}.
\]
Applying \eqref{eq:key_Bergman_ineq} with $u = z-c$, $v = \overline{w}-\overline{c}$, and using $r-|u| = \mathrm{dist}(z,\partial\Omega)$, $r-|v| = \mathrm{dist}(w,\partial\Omega)$, yields \eqref{UpperBoundBergmanineq}. For general $\Omega$, note that $D(z,r_z), D(w,r_w) \subset \Omega$ with $r_z = \mathrm{dist}(z,\partial\Omega)$, $r_w = \mathrm{dist}(w,\partial\Omega)$, and so we can use Lemma~\ref{lem:BergmanBasicProp}(i)--(ii) to get
\[
|B_\Omega(z,w)|^2 \le B_\Omega(z,z) B_\Omega(w,w) \le B_{D(z, r_z)}(z,z) B_{D(w, r_w)}(w,w) \ll \frac{1}{r_z^2 r_w^2}.
\]
Now we prove \eqref{DerivBergmanineq}. The bound is local, so we may assume $\Omega = D(c,r)$. Differentiating Lemma~\ref{lem:BergmanBasicProp}(iii) with respect to $z$ gives
\[
\partial_z B_\Omega(z_0, w_0) = \frac{2r^2(\overline{w_0} - \overline{c})}{\pi(r^2 - (z_0-c)(\overline{w_0}-\overline{c}))^3}.
\]
Since $|\overline{w_0} - \overline{c}| = |w_0 - c| < r$, we have 
\[
|\partial_z B_\Omega(z_0,w_0)| \ll \frac{r^3}{|r^2-(z_0-c)(\overline{w_0}-\overline{c})|^3}.
\]
From \eqref{eq:key_Bergman_ineq} we have $|r^2-(z_0-c)(\overline{w_0}-\overline{c})|^3 \ge r^3\,\mathrm{dist}(z_0,\partial\Omega)^{3/2}\mathrm{dist}(w_0,\partial\Omega)^{3/2}$, yielding \eqref{DerivBergmanineq}. The bound on $\partial_{\overline w} B_\Omega$ follows by conjugate symmetry.
\end{proof}

Also helpful are the following estimates for functions in $H^2(\Omega)$:

\begin{lemma}\label{lem:bergman_estimates}
For every bounded open subset $\Omega \subset \mathbb{C}$, $f \in H^2(\Omega)$, $x_0\in \Omega$, and $k\in \mathbb{N}_0$, we have
  \[
  |f^{(k)}(x_0)| \ll \frac{2^k k!}{\mathrm{dist}(x_0, \partial \Omega)^{k+1}} \|f\|_{L^2(\Omega)},
  \]
where the implied constant is independent of $\Omega$, $x_0$, $f$, and $k$.
\end{lemma}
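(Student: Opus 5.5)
The plan is to use the Cauchy integral formula on a disk centered at $x_0$ and then convert the resulting pointwise bound on $f$ into an $L^2$ bound via the sub-mean value property. Set $r := \mathrm{dist}(x_0,\partial\Omega)$, so that the closed disk $\overline{D(x_0,\rho)}$ lies in $\Omega$ for every $\rho<r$.

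\emph{Step 1 (Cauchy estimate).} Take the circle of radius $r/2$ about $x_0$. Cauchy's formula gives
\[
|f^{(k)}(x_0)| \le \frac{k!}{(r/2)^k}\,\sup_{|z-x_0|=r/2}|f(z)| = \frac{2^k k!}{r^k}\,\sup_{|z-x_0|=r/2}|f(z)|.
\]
This already produces the factor $2^k k!$ and the power $r^{-k}$ claimed in the statement.

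\emph{Step 2 (pointwise to $L^2$).} Fix a point $z$ with $|z-x_0|=r/2$. Then $D(z,r/2)\subset D(x_0,r)\subset\Omega$. Since $f$ is holomorphic, the mean value property over this disk together with Cauchy--Schwarz yields
\[
|f(z)| \le \frac{1}{\pi (r/2)^2}\int_{D(z,r/2)}|f|\,\dvol \le \frac{1}{\sqrt{\pi}\,(r/2)}\,\|f\|_{L^2(\Omega)} = \frac{2}{\sqrt{\pi}\,r}\,\|f\|_{L^2(\Omega)}.
\]
Equivalently, one could invoke \eqref{UpperBoundBergmanineq} with $z=w$ and the reproducing identity $|f(z)|\le B_\Omega(z,z)^{1/2}\|f\|_{L^2(\Omega)}$.

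\emph{Step 3 (combine).} Inserting the bound from Step 2 into Step 1 gives
\[
|f^{(k)}(x_0)| \le \frac{2}{\sqrt{\pi}}\cdot\frac{2^k k!}{r^{k+1}}\,\|f\|_{L^2(\Omega)},
\]
with the absolute constant $2/\sqrt{\pi}$. This constant is independent of $\Omega$, $x_0$, $f$ and $k$, as required.

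No step here is a genuine obstacle. The only point needing care is the choice of radius $r/2$, which is what keeps the constant independent of $k$: the auxiliary disks $D(z,r/2)$ centered on the Cauchy circle must stay inside $\Omega$, and they do because they lie in $D(x_0,r)$.
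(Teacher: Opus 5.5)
Your proof is correct and follows the paper's argument: Cauchy's estimate on the circle of radius $\mathrm{dist}(x_0,\partial\Omega)/2$, followed by a pointwise $L^2$ bound for $f$ at points of that circle, which lie at distance at least $\mathrm{dist}(x_0,\partial\Omega)/2$ from $\partial\Omega$. The only difference is cosmetic: the paper gets that pointwise bound from the reproducing property and $B_\Omega(z,z)\ll \mathrm{dist}(z,\partial\Omega)^{-2}$ (the alternative you mention), whereas your area mean value plus Cauchy--Schwarz route is self-contained and gives the explicit constant $2/\sqrt{\pi}$.
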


\begin{proof}
By the reproducing property \eqref{berg:int} and Cauchy--Schwarz, 
\[
|f(x_0)| \le \left( \int_\Omega |B_\Omega(x_0,w)|^2 \dvol(w) \right)^{1/2} \|f\|_{L^2(\Omega)} = B_\Omega(x_0, x_0)^{1/2} \|f\|_{L^2(\Omega)}.
\]
Applying Lemma~\ref{lem:UpperBoundBergman} at $z = w = x_0$ gives the bound for $k=0$. For $k\geq 1$, we use Cauchy's integral formula on a disk of radius $r = \mathrm{dist}(x_0, \partial \Omega)/2$ to get
\[
|f^{(k)}(x_0)| \le \frac{k!}{r^k} \sup_{|z-x_0|=r} |f(z)|.
\]
Since any point $z$ with $\vert z-x_0\vert = \mathrm{dist}(x_0, \partial \Omega)/2$ has distance at least $\mathrm{dist}(x_0, \partial \Omega)/2$ from the boundary of $\Omega$, this gives
\[
|f^{(k)}(x_0)| \ll \frac{k!}{r^{k+1}} \|f\|_{L^2(\Omega)} \ll \frac{2^k k!}{\mathrm{dist}(x_0, \partial \Omega)^{k+1}} \|f\|_{L^2(\Omega)},
\]
finishing the proof.
\end{proof}

\section{Proof of the main theorem}\label{sec:main_thm}
This section is devoted to the proof of Theorem~\ref{thm:main}. Fix positive parameters $h, \tau, \tau', \tau''$ to be chosen later and consider the concatenated operator
\[
\mathcal{L}_{\tau, \tau', \tau''; s} := \mathcal{L}_{\tau,s} \mathcal{L}_{\tau',s} \mathcal{L}_{\tau'',s} \colon H^2(\Omega(h)) \to H^2(\Omega(h)).
\]
This is a well-defined trace class operator by Lemma~\ref{lem:contractionInH}, provided the parameters are sufficiently small. Consider the entire function
\[
F(s) := \det(1 - \mathcal{L}_{\tau, \tau', \tau''; s}^2) = \det(1 - \mathcal{L}_{\tau, \tau', \tau''; s}) \det(1 + \mathcal{L}_{\tau, \tau', \tau''; s}).
\]
The Fredholm determinants of this operator on $H^2(D)$ and on $H^2(\Omega(h))$ agree, since both are determined by the traces of the iterates. These traces are independent of the domain, and in particular of $h$: by the Lefschetz fixed point formula, they only involve the derivatives of the composed branches at their attracting fixed points, which lie in $\Lambda$. Hence we can apply Lemma~\ref{lem:fredholmRefined} to obtain the factorization $F(s) = Z_X(s) H(s)$ for some entire function $H(s)$. Thus, since resonances of $X$ correspond to zeros of $Z_X(s)$ with the same multiplicity (except possibly in $-\mathbb{N}_0$), the function $M_X(\sigma, T)$ is bounded above by the number of zeros of $F(s)$ in the rectangular region
\[
R_{\sigma,T}:=\left\{ s \in \mathbb{C} : \sigma \le \mathrm{Re}(s) \le \delta,\ |\mathrm{Im}(s) - T| \le 1 \right\},
\]
including multiplicities. By Weyl's inequality on singular values, see \cite[Chapter~3]{Simon},
\begin{equation}\label{eq:weyl}
\log|F(s)| \le \| \mathcal{L}_{\tau, \tau', \tau''; s}^2\|_1 \le \| \mathcal{L}_{\tau, \tau', \tau''; s}\|_{\mathrm{HS}}^2,
\end{equation}
where $\|\cdot\|_1$ and $\|\cdot\|_{\mathrm{HS}}$ denote the trace norm and Hilbert--Schmidt norm on $H^2(\Omega(h))$, respectively. The key ingredient in the proof of Theorem~\ref{thm:main} is the following bound on the Hilbert--Schmidt norm of $\mathcal{L}_{\tau, \tau', \tau''; s}$ acting on the refined function space $H^2(\Omega(h))$.

\begin{prop}\label{prop:HS_bound}
Let $\epsilon>0$. For all sufficiently large $T$, there exist parameters $h,\tau,\tau',\tau''\in(0,1)$, depending only on $T$, $\epsilon$, and $\Gamma$, which are small enough for $\mathcal{L}_{\tau,\tau',\tau'';s}$ to be well defined and satisfy $\tau\tau'\tau''\le T^{-1}$, such that for all $s = \sigma + it \in \mathbb{C}$ with $\sigma > 0$ and $T/2 \le |t| \le 2T$,
\[
\| \mathcal{L}_{\tau, \tau', \tau''; s}\|_{\mathrm{HS}}^2 \ll_{\epsilon} e^{C \sigma} T^{A(\delta - \sigma) - \frac{\beta_{\mathrm{F}}}{2} + \epsilon},
\]
where $A=2+\beta_{\mathrm{F}}/\delta$.
Here $C>0$ and the implied constant are independent of $s$ and $T$.
\end{prop}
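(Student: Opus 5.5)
We will expand the Hilbert--Schmidt norm through the Bergman kernel, use the separation lemma to kill the cross terms coming from the innermost factor, and apply Theorem~\ref{thm:BD_fourier_decay} to the remaining off-diagonal terms of the middle factor.

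\textbf{Kernel expansion.} Unwinding the three sums, for $z\in\Omega_b(h)$ we get
\[
\mathcal{L}_{\tau,\tau',\tau'';s}f(z)=\sum_{(\Aa,\Aa',\Aa'')}\gamma_{\Cc}'(z)^s f(\gamma_{\Cc}(z)),\qquad \Cc=\Aa''\Aa'\Aa ,
\]
where $\Aa\in Y_b(\tau)$, $\Aa'\in Y(\tau')$ and $\Aa''\in Y(\tau'')$ are compatible, and the outermost Möbius factor is $\gamma_{\Aa''}$. By \eqref{berg:int} the Schwartz kernel is $K(z,w)=\sum_{\Cc}\gamma_{\Cc}'(z)^s B_{\Omega(h)}(\gamma_{\Cc}z,w)$. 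Integrating $|K|^2$ in $w$ and using the reproducing property once more gives
\[
\|\mathcal{L}_{\tau,\tau',\tau'';s}\|_{\mathrm{HS}}^2=\sum_{b}\int_{\Omega_b(h)}\sum_{\Cc,\Dd}\gamma_{\Cc}'(z)^s\,\overline{\gamma_{\Dd}'(z)^s}\,B_{\Omega(h)}(\gamma_{\Cc}z,\gamma_{\Dd}z)\,\dvol(z).
\]

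\textbf{Separation of the innermost factor.} We take $h=c_0T^{-1}$ so that $e^{Ch|t|}=O(1)$ in Lemma~\ref{lem:derivative_bounds_omega}, and $\tau''=C_{\mathrm{sep}}h$. The point $\gamma_{\Cc}z$ is $\gamma_{\Aa''}$ applied to a point of some $D_\ell$. So by Lemma~\ref{lem:separation} the kernel $B_{\Omega(h)}(\gamma_{\Cc}z,\gamma_{\Dd}z)$ vanishes unless $\Aa''\sim_{\mathrm{pre}}\Bb''$. Lemma~\ref{lem:boundedTransitions} then reduces us to $\Bb''\in\Aa''F\cup\{\Aa''\}$, which is $O(1)$ choices. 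For these pairs, Lemma~\ref{lem:UpperBoundBergman} together with Lemma~\ref{lem:contractionInH} (images stay at distance $\gg h$ from $\partial\Omega(h)$) gives $|B|\ll h^{-2}$. The $C^1$-bounds \eqref{DerivBergmanineq} give control of $B$ along the scale $\Upsilon_{\Cc}h\ll h$.

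\textbf{Reduction to the limit set.} Next we use the outer factor. The region $\Omega_b(h)$ has volume $\ll h^{2-\delta}$ by Lemma~\ref{lem:vol_bound_omega}. After composing with $\gamma_{\Aa}$ for $\Aa\in Y(\tau)$, the integrand varies on scale $\tau$ in the $x$-variable, which is much coarser than $h$. We replace $\int_{\Omega_b(h)}\cdots\dvol$ by $h^{2-\delta}\int_{\Lambda\cap I_b}\cdots\,d\mu_\delta$ up to acceptable errors, using Ahlfors regularity and the holomorphy of the summand. I expect this to require the approximation statement (Proposition~\ref{prop:approximation}) to move from a two-dimensional integral to a $\mu_\delta$-integral with controlled $C^1$ amplitude.

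Then we split the sum over $(\Aa',\Bb')$ into two parts.
\begin{itemize}
\item[(a)] \emph{Diagonal part}, $\Aa'\sim_{\mathrm{pre}}\Bb'$, which is $O(1)$ partners per $\Aa'$. This is bounded trivially by $h^{-\delta}\cdot\#Y(\tau)\#Y(\tau')\#Y(\tau'')\,(\tau\tau'\tau'')^{2\sigma}$, which by Lemma~\ref{lem:YZ} is $\asymp h^{-\delta}(\tau\tau'\tau'')^{2\sigma-\delta}$.
\item[(b)] \emph{Off-diagonal part}. After rescaling $x=\gamma_{\Aa}(y)$ and transporting $\mu_\delta$ by the conformal relation $\mathcal{L}_\delta^*\mu_\delta=\mu_\delta$, the phase is $t\,\Phi_{\Cc,\Dd}$. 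By Proposition~\ref{prop:phaseDerivative} and Lemma~\ref{lem:boundsderivatives}(iv), its derivative has size $\asymp |t|\,\tau''\,\mathcal{D}_{\Aa',\Bb'}\,\tau$ in the unit-scale variable. Theorem~\ref{thm:BD_fourier_decay}, applied with normalized phase $\varphi=\Phi/\|\Phi'\|$ (whose $C^2$ norm and lower bound on $\Lambda$ are uniform by Lemma~\ref{lem:bounded_distortion} and \eqref{eq:log_der}) and amplitude from the $\sigma$-powers and $B$, yields a gain of $(|t|\tau\tau''\mathcal{D}_{\Aa',\Bb'})^{-\beta_{\mathrm{F}}/2+\epsilon}$. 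We sum over $\Bb'$ by dyadic decomposition of $\mathcal{D}_{\Aa',\Bb'}$, i.e.\ of the length of the common prefix. Ahlfors regularity shows that the number of $\Bb'$ with $\mathcal{D}\asymp 2^{-k}$ is about $2^{-k\delta}\tau'^{-\delta}$, which keeps the sum convergent as long as $\beta_{\mathrm{F}}/2<\delta$.
\end{itemize}

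\textbf{Optimization.} We will choose $\tau=T^{-a}$ and $\tau'=T^{-b}$ subject to $\tau\tau'\tau''\le T^{-1}$. The exponents should be balanced so that the diagonal and off-diagonal contributions match. The Fourier gain requires $|t|\tau\tau''\gg1$, which forces $\tau$ large relative to $T^{-1}\tau''^{-1}$. Solving the resulting linear constraints in $(a,b)$ should give the slope $A=2+\beta_{\mathrm{F}}/\delta$ and the constant $-\beta_{\mathrm{F}}/2$. The factor $\beta_{\mathrm{F}}/\delta$ comes from trading $\tau'$-cardinality $\tau'^{-\delta}$ against the gain $T^{-\beta_{\mathrm{F}}/2}$.

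\textbf{Main obstacle.} The hardest step is the reduction to the limit set while keeping the hypotheses of Theorem~\ref{thm:BD_fourier_decay} uniform over all pairs of words. The Bergman-kernel amplitude oscillates on scale $h$, while the phase oscillates on a scale set by $\tau$. I would try first to Taylor-expand the holomorphic summand in $y$ around $\Lambda$ inside each disk $D_\ell(h)$. The polynomial part would be controlled on $\Lambda$ via the Remez-type inequality (Lemma~\ref{lem:polynomial-norming}), with the remainder bounded by Lemma~\ref{lem:bergman_estimates}.
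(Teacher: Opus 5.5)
There is a genuine gap. Your ingredients are the right ones (separation at $\tau''=C_{\mathrm{sep}}h$, Bourgain--Dyatlov decay for the middle factor, Taylor/Remez to pass from $\Omega(h)$ to $\Lambda$), but the step that is supposed to produce the exponent fails as you set it up.

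\textbf{The Fourier frequency is wrong.} Your frequency $|t|\tau\tau''\mathcal D_{\Aa',\Bb'}$ should not contain $\tau''$. Since $\Aa''\sim_{\mathrm{pre}}\Bb''$, the innermost word only conjugates $\mathsf{red}(\Cc\overline{\Dd})$, so it cancels in $\mathcal D_{\Cc,\Dd}$; compare the paper's $\mathcal D_{\Cc\Aa_2,\Cc\Bb_2}\asymp \tau'\,\Upsilon_{\mathsf{red}(\Aa_2\overline{\Bb_2})}^{-1/2}$. Moreover, Theorem~\ref{thm:BD_fourier_decay} must be applied on all of $\Lambda\cap I_b$ at unit scale. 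A factor $h\asymp\tau''$ appears only if you work on single intervals $I_\ell(h)$, and there the decay is useless. Indeed, with your own choices $h=c_0T^{-1}$ and $\tau''=C_{\mathrm{sep}}h$, you get $|t|\tau\tau''\mathcal D_{\Aa',\Bb'}\ll\tau<1$. So the claimed gain is vacuous, and the requirement $|t|\tau\tau''\gg1$ would force $\tau\gg1$. Your ``optimization'' therefore has no admissible solution, and you never actually carry it out.

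\textbf{The outer factor is not handled coherently.} You keep the outer words in the phases $\Phi_{\Cc,\Dd}$ and want to replace $\int_{\Omega_b(h)}\cdots\dvol$ by $h^{2-\delta}\int_{\Lambda\cap I_b}\cdots d\mu_\delta$ term by term. Proposition~\ref{prop:approximation} does not provide such a replacement. It is an upper bound for the positive quantity $\|\mathcal L_{\tau,s}f\|^2$, obtained by Cauchy--Schwarz over the outer branches and the Remez inequality applied to a Taylor polynomial of $f$. Your diagonal count $h^{-\delta}\#Y(\tau)\#Y(\tau')\#Y(\tau'')(\tau\tau'\tau'')^{2\sigma}$ silently assumes the outer words coincide, and the pairs $\Aa\neq\Bb$ in $Y(\tau)$ are never controlled.

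\textbf{What works is the reverse order.}
\begin{enumerate}
\item Apply Proposition~\ref{prop:approximation} to $f=\mathcal L_{\tau',\tau'';s}\psi_m$ and sum over $m$. This removes the outer factor entirely, at cost $\tau^{-2\delta+2\sigma-\epsilon}$, which absorbs all outer cross terms.
\item This leaves an error $\tau^{N+2\sigma}h^{-\delta}\|\mathcal L_{\tau',\tau'';s}\|_{\mathrm{HS}}^2$. You must control it by the trivial bound $\ll h^{-\delta}(\tau'\tau'')^{2\sigma-2\delta}$.
\item Expand the main term $\sum_m\int_\Lambda|\mathcal L_{\tau',\tau'';s}\psi_m|^2d\mu_\delta$ via the Bergman kernel on $\Lambda$. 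Separation plus decay at frequency $|t|\mathcal D_{\Aa_2,\Bb_2}$ gives $h^{-2}(\tau'\tau'')^{2\sigma-\delta}\bigl(1+(\tau')^{-\delta}|t|^{-\beta_{\mathrm{F}}/2}\bigr)$. Note that the degenerate middle pairs are the suffix-comparable ones, not the prefix-comparable ones: $\mathcal D_{\Aa',\Bb'}$ is governed by the maximal common suffix.
\item Choose $h=T^{-1}$ and $\tau'=T^{-\beta_{\mathrm{F}}/(2\delta)}$, which balances $(\tau')^{-\delta}$ against $|t|^{-\beta_{\mathrm{F}}/2}$. Choose $\tau=cT^{-A'/(2\delta+N)}$ with $N$ large, so $\tau=T^{-o(1)}$ contributes only $T^{\epsilon}$ and no oscillation.
\end{enumerate}
Finally, the condition $\tau\tau'\tau''\le T^{-1}$ is automatic from $\tau''\asymp T^{-1}$; it is not a constraint to optimize over.
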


Let us first show how to deduce Theorem~\ref{thm:main} from this proposition.\\

\textbf{Deduction of Theorem~\ref{thm:main} from Proposition~\ref{prop:HS_bound}}
Fix $\sigma\in(\delta/2,\delta]$ and $\epsilon>0$. For large $T$, let $h,\tau,\tau',\tau''$ be the parameters from Proposition~\ref{prop:HS_bound}, so that $F$ depends on $T$ and $\epsilon$. Let $D_\mathbb{C}(s_0, r_1)$ and $D_\mathbb{C}(s_0, r_2)$ be concentric disks centered at $s_0 = \sigma_0 + iT$ with radii $r_2 > r_1 > 0$, chosen such that
\begin{equation}\label{concentric_jensen}
R_{\sigma,T} \subset \overline{D_\mathbb{C}(s_0, r_1)} \subset D_\mathbb{C}(s_0, r_2).
\end{equation}

\begin{figure}[ht]
\centering
\begin{tikzpicture}[scale=0.9]
  \pgfmathsetmacro{\rOne}{2.9}
  \pgfmathsetmacro{\xZero}{4.3}
  \pgfmathsetmacro{\xSigma}{\xZero - sqrt(\rOne*\rOne - 1)}
  \pgfmathsetmacro{\yDashTop}{6.7}
  \pgfmathsetmacro{\xDashRight}{7.5}

  \draw[->] (-0.5,0) -- (8.2,0) node[right] {$\mathrm{Re}(s)$};
  \draw[->] (0,-0.5) -- (0,7.2) node[above] {$\mathrm{Im}(s)$};

  \draw[dashed,gray] (\xSigma,0) -- (\xSigma,\yDashTop);
  \draw[dashed,gray] (2.2,0) -- (2.2,\yDashTop);
  \draw[dashed,gray] (\xZero,0) -- (\xZero,\yDashTop);
  \draw[dashed,gray] (0,3.0) -- (\xDashRight,3.0);
  \draw[dashed,gray] (0,4.0) -- (\xDashRight,4.0);
  \draw[dashed,gray] (0,5.0) -- (\xDashRight,5.0);

  \node[below] at (\xSigma,0) {$\sigma$};
  \node[below] at (2.2,0) {$\delta$};
  \node[below] at (\xZero,0) {$\sigma_0 = \delta + K$};
  \node[left] at (0,3.0) {$T-1$};
  \node[left] at (0,4.0) {$T$};
  \node[left] at (0,5.0) {$T+1$};

  \coordinate (s0) at (\xZero,4.0);
  \fill (s0) circle (1.2pt);
  \node[above right] at (s0) {$s_0=\sigma_0+iT$};

  \draw[thick,blue!70!black] (s0) circle (2.9);
  \draw[thick,orange!85!black] (s0) circle (3.3);

  \filldraw[fill=green!18,draw=green!50!black,thick] (\xSigma,3.0) rectangle (2.2,5.0);
  \node at ({(\xSigma+2.2)/2},4.0) {$R_{\sigma,T}$};

  \node[blue!70!black,fill=white,inner sep=1.5pt] at (6.15,2.55) {$D_\mathbb{C}(s_0,r_1)$};
  \node[orange!85!black,fill=white,inner sep=1.5pt] at (7,6.5) {$D_\mathbb{C}(s_0,r_2)$};
\end{tikzpicture}
\caption{Geometry of the Jensen argument}
\label{fig:jensen_disks}
\end{figure}
Applying Jensen's formula gives
\[
M_X(\sigma, T) \le \frac{1}{\log(r_2 / r_1)} \left( \int_0^1 \log |F(\sigma_0 + r_2 e^{2\pi i \theta} + iT)| \, d\theta - \log |F(\sigma_0 + iT)| \right).
\]
The idea is to choose the center of the disks to be far to the right of $\delta$ so that the second term $|\log |F(\sigma_0 + iT)| |$ becomes negligibly small. Concretely, let $K\gg 1$ and choose parameters
\[
\sigma_0 := \delta + K, \quad r_1 := \sqrt{(\sigma_0-\sigma)^2 + 1}, \quad r_2 := r_1 + \frac{1}{K}.
\]
These choices guarantee \eqref{concentric_jensen}, as seen in Figure~\ref{fig:jensen_disks}. For large $K$, we have
\begin{itemize}
\item $r_1 \asymp r_2 \asymp K$, so $\log(r_2/r_1)^{-1} \ll K^2$;
\item $\sigma - O(1/K) \le \mathrm{Re}(\sigma_0 + r_2 e^{2\pi i \theta}) \le O(K)$ for all $\theta \in [0,1)$;
\item $|\mathrm{Im}(r_2 e^{2\pi i \theta})| \le r_2 = O(K)$ for all $\theta \in [0,1)$.
\end{itemize}
Combining these estimates, we see that there are constants $C_1, C_2 > 0$ such that
\[
M_X(\sigma, T) \ll K^2 \left(  \max_{\substack{ \sigma - C_1 / K \le \mathrm{Re}(s) \le C_2 K \\ |\mathrm{Im}(s)| \le C_2 K }} \log |F(s + iT)| + \left| \log |F(\delta + K + iT)| \right|\right).
\]
Recall from \eqref{eq:weyl} that $\log |F(s + iT)| \le \| \mathcal{L}_{\tau, \tau', \tau''; s + iT}\|_{\mathrm{HS}}^2.$ As long as $K \le \frac{T}{2C_2}$, we have $\mathrm{Im}(s+iT) \in [T/2, 2T]$ for all $|\mathrm{Im}(s)| \le C_2 K$. Moreover, $\mathrm{Re}(s+iT)\ge\sigma-C_1/K>0$ for $K$ large, since $\sigma>\delta/2$. Hence Proposition~\ref{prop:HS_bound} applies to the first term. For large $T$, the function $q\mapsto e^{Cq}T^{A(\delta-q)}$ is decreasing, since $A\log T>C$. Its maximum on the indicated interval occurs at $q=\sigma-C_1/K$, where $e^{Cq}=O_\sigma(1)$. This gives
\begin{equation}\label{eq:second_term_bound}
M_X(\sigma, T) \ll_{\epsilon} K^2 \left( T^{ A\left(\delta - \sigma\right) -  \frac{\beta_{\mathrm{F}}}{2} + O(1/K) + \epsilon}  + \left| \log |F(\delta + K + iT)| \right|\right).
\end{equation}
For the second term, we argue as in \cite{Soa25}. For a trace class operator $\mathcal{A}$, the Fredholm determinant admits the expansion
\begin{equation}
\det(1-\mathcal{A}) = \exp\left( -\sum_{n=1}^\infty \frac{1}{n} \mathrm{tr}(\mathcal{A}^n) \right),
\end{equation}
whenever the sum in the exponential converges absolutely; in particular, $\|\mathcal{A}\|_1 < 1$ is a sufficient condition. Taking the log-modulus then gives
\begin{equation}\label{eq:log_modulus_bound}
\vert \log \vert \det(1-\mathcal{A})\vert \vert \le \sum_{n=1}^\infty \frac{1}{n} \vert \mathrm{tr}(\mathcal{A}^n) \vert.
\end{equation}
We apply \eqref{eq:log_modulus_bound} to $\mathcal{A} = \mathcal{L}_{\tau, \tau', \tau''; s}^2$. The argument in the proof of \cite[Proposition~4.8]{NM20} carries over to the triple operator $\mathcal{L}_{\tau, \tau', \tau''; s}$ and shows that
\[
|\mathrm{tr}(\mathcal{L}_{\tau, \tau', \tau''; s}^{n})| = O((C \tau\tau'\tau'')^{n(\mathrm{Re}(s) - \delta)})
\]
for all $n \in \mathbb{N}$ and $\mathrm{Re}(s)\ge\delta+1$, with $C > 0$ and the implied constant independent of $n$, $s$, and the parameters $h, \tau, \tau', \tau''$. Inserting this into \eqref{eq:log_modulus_bound}, we deduce that for all $s$ with $\mathrm{Re}(s) = \delta + K$, and for $C \tau\tau'\tau'' < 1/2$,
\[
\vert \log \vert F(s)\vert \vert \ll \sum_{n=1}^\infty \frac{1}{n} (C \tau\tau'\tau'')^{2nK} \ll (C \tau\tau'\tau'')^{2K}.
\]
Returning to \eqref{eq:second_term_bound} and recalling that $\tau\tau'\tau''\le T^{-1}$, we get
\[
M_X(\sigma, T) \ll_{\epsilon} K^2 \left( T^{ A\left(\delta - \sigma\right) -  \frac{\beta_{\mathrm{F}}}{2} + O(1/K) + \epsilon}  + (C T^{-1})^{2K}\right).
\]
Choosing $K = \log T$, the factors $K^2$ and $T^{O(1/K)}$ are absorbed into $T^\epsilon$, while
\[
(C T^{-1})^{2K}
=O(\exp(-\log^2 T))
\]
decays faster than any polynomial in $T$. This gives
\[
M_X(\sigma, T) \ll_{\epsilon} T^{ A\left(\delta - \sigma\right) -  \frac{\beta_{\mathrm{F}}}{2} + \epsilon }.
\]
Consequently, we get
\[
m_X(\sigma) \le A(\delta-\sigma) - \frac{\beta_{\mathrm{F}}}{2}.
\]

The rest of this section is devoted to proving Proposition~\ref{prop:HS_bound}.

\subsection{Reduction to integrals over the limit set}\label{sec:HS_bound}
The proof of the above proposition relies on the following approximation result.

\begin{prop}[Holomorphic approximation of order $N$]\label{prop:approximation}
There exists $C = C(\Gamma)> 0$ such that, for every fixed integer $N\ge1$, all sufficiently small $h, \tau > 0$, all $f \in H^2(\Omega(h))$, all $s = \sigma + it \in \mathbb{C}$, and any $\epsilon > 0$, we have
\[
\Vert \mathcal L_{\tau,s}f \Vert_{L^2(\Omega(h))}^2 \ll_{N,\epsilon} e^{C (h|t| + |\sigma|)}\left(\tau^{-2\delta+2\sigma-\epsilon} h^{2-\delta} \int_{\Lambda} |f|^2\,d\mu_\delta + \tau^{N + 2\sigma} h^{-\delta} \|f\|_{L^2(\Omega(h))}^2 \right),
\]
where $\mu_\delta$ is the Ruelle--Perron--Frobenius measure from Proposition~\ref{prop:RPF}.
\end{prop}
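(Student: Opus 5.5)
We begin by writing $\|\mathcal L_{\tau,s}f\|_{L^2(\Omega(h))}^2$ as a sum over components: for $z\in\Omega_b(h)$, $\mathcal L_{\tau,s}f(z)=\sum_{\Aa\in Y_b(\tau)}\gamma_\Aa'(z)^s f(\gamma_\Aa(z))$. The plan is to estimate each term $f\circ\gamma_\Aa$ on $\Omega_b(h)$ by a Taylor polynomial of $f$ centered at points of $\Lambda$. By Lemma~\ref{lem:contractionInH}, each image $\gamma_\Aa(D_\ell(h))$ is a disk of radius $\asymp \tau h$ contained in $\Omega(h/2)$, so it sits at distance $\gg h$ from $\partial\Omega(h)$. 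Fix a point $x_{\Aa,\ell}=\gamma_\Aa(x_\ell)$ with $x_\ell\in\Lambda\cap I_\ell(h)$. Expanding $f$ to order $N$ around $x_{\Aa,\ell}$ and using Lemma~\ref{lem:bergman_estimates} with $\mathrm{dist}(\cdot,\partial\Omega(h))\gg h$, the remainder on a disk of radius $\ll\tau h$ is bounded by $\sum_{k> N}(C\tau h)^k h^{-k-1}\|f\|\ll (C\tau)^{N+1}h^{-1}\|f\|_{L^2(\Omega(h))}$.

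The remainder contributions are then routine. Summing over $\Aa\in Y_b(\tau)$ with $|\gamma_\Aa'(z)^s|\ll e^{C(h|t|+|\sigma|)}\tau^\sigma$ (Lemma~\ref{lem:derivative_bounds_omega}(iii)) and $|Y(\tau)|\asymp\tau^{-\delta}$, and integrating over $\Omega(h)$ with $\vol\ll h^{2-\delta}$ (Lemma~\ref{lem:vol_bound_omega}), gives a term of size $\tau^{2N+2-2\delta+2\sigma}h^{-\delta}\|f\|^2$. After relabeling $N$, this is the second term in the statement. Two points need care here: the cross terms from $|\sum|^2$, which are controlled by Cauchy--Schwarz, and whether $\Omega_b(h)$ has $N(h)$ components.

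The main term is the polynomial part $P_{\Aa,\ell}(w)=\sum_{k\le N}f^{(k)}(x_{\Aa,\ell})(w-x_{\Aa,\ell})^k/k!$. Its contribution is at most
\[
e^{C(h|t|+|\sigma|)}\tau^{2\sigma}\,h^{2}\sum_\ell\Bigl(\sum_{\Aa}\max_{k\le N}(\tau h)^k|f^{(k)}(x_{\Aa,\ell})|\Bigr)^2 .
\]
What remains is to bound the Taylor data $(\tau h)^k|f^{(k)}|$ at points of $\Lambda$ by an $L^2(\mu_\delta)$ average of $|f|$ over the piece $\Lambda\cap\gamma_\Aa(I_\ell(h))$, which has $\mu_\delta$-mass $\asymp(\tau h)^\delta$ by Ahlfors regularity.

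This is the main obstacle. We need a Remez-type inequality: for a polynomial $P$ of degree $\le N$ and an Ahlfors $\delta$-regular measure on a disk $D'$ of radius $\rho$,
\[
\sup_{D'}|P|^2\ll_N \mu_\delta(D')^{-1}\int_{\Lambda\cap D'}|P|^2\,d\mu_\delta .
\]
This holds because $\Lambda\cap D'$ is not contained in a small neighborhood of $N$ points at scale $\rho$, and it follows from the finite-dimensionality of the polynomial space together with compactness, using uniformity from the self-similarity of $\Lambda$. Applying it to $P=P_{\Aa,\ell}$ and replacing $P$ by $f$ on $\Lambda$ costs another remainder of the type already handled. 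This gives $\sup|P_{\Aa,\ell}|^2\ll(\tau h)^{-\delta}\int_{\Lambda\cap\gamma_\Aa(I_\ell)}|f|^2d\mu_\delta$ plus an acceptable error.

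Inserting this bound, we apply Cauchy--Schwarz in $\Aa$. The number of $\Aa$ per $b$ is $\ll\tau^{-\delta}$, and the pieces $\gamma_\Aa(I_\ell(h))\cap\Lambda$ have bounded overlap over $(\Aa,\ell)$, using Lemma~\ref{lem:boundedTransitions} and the fact that $Z(\tau)$ is a partition. This yields
\[
\tau^{2\sigma}h^2\,\tau^{-\delta}(\tau h)^{-\delta}\int_\Lambda|f|^2d\mu_\delta=\tau^{2\sigma-2\delta}h^{2-\delta}\int_\Lambda|f|^2\,d\mu_\delta,
\]
with a $\tau^{-\epsilon}$ loss allowed to absorb the constants from the multiplicities in the bounded overlap and distortion.
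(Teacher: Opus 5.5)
Your proposal is correct. It shares the paper's skeleton: Taylor expansion of $f$ at $\gamma_\Aa(x_\ell)\in\Lambda$, Cauchy--Bergman bounds at scale $h$ for the remainder, and a Remez-type inequality for polynomials against uniformly Frostman measures, proved by compactness. Where you differ is in how the branches are recombined.

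The paper applies the Remez step on the base interval. It pushes $\mu_\delta|_{I_\ell(h)}$ forward by $T_{\Aa,\ell}$ and gets $\sup_{D_\ell(h)}|f\circ\gamma_\Aa|^2\ll h^{-\delta}\int_{I_\ell(h)\cap\Lambda}|f\circ\gamma_\Aa|^2\,d\mu_\delta$ plus a remainder. The sum over $\Aa$ then becomes $\tau^{-\delta}\mathcal L_{\tau,\delta}|f|^2$, and the paper invokes $\mathcal L_\delta^*\mu_\delta=\mu_\delta$ through Lemma~\ref{lem:LZ_int_bound}. That lemma's crude count over word lengths costs $\log(\tau^{-1})$, which is where the $\tau^{-\epsilon}$ comes from.

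You instead apply Remez on the image side, with $\mu_\delta$ itself restricted to $\gamma_\Aa(I_\ell(h))$, and recombine by bounded overlap. This uses only Ahlfors regularity, not conformality of $\mu_\delta$. It actually gives the estimate with no $\epsilon$-loss, since the overlap multiplicity is a constant depending only on $\Gamma$; your closing remark about spending a $\tau^{-\epsilon}$ on multiplicities is unnecessary. In exchange, the paper's route needs only $\mu_\delta(I_\ell(h))\asymp h^\delta$ and no combinatorics of the image intervals.

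Two justifications in your sketch should be sharpened.

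First, uniformity in the Remez step comes from a uniform Frostman bound for the rescaled, normalized restrictions. This needs the lower bound $\mu_\delta(\gamma_\Aa(I_\ell(h)))\gg(\tau h)^\delta$, which holds because $\gamma_\Aa(I_\ell(h))$ contains an interval of radius $\gg\tau h$ centered at $\gamma_\Aa(x_\ell)\in\Lambda$. ``Self-similarity'' is not the operative input. The form you need (sup over a disk of radius $K\tau h$, integral over the smaller interval) is exactly Lemma~\ref{lem:polynomial-norming}.

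Second, bounded overlap does not follow from $Z(\tau)$ being a partition. The pieces lie in $I_{\Aa b}$ with $\Aa b\in\overline{Z(\tau)}$, and the mirror set need not be a partition. The correct argument has two parts. The pair $(\Aa,\ell)$ is determined by the point together with the word $\Aa b_\ell$, since $\gamma_\Aa$ is injective. The words of $\overline{Z(\tau)}$ whose intervals contain a given point are pairwise prefix-comparable with $\Upsilon\asymp\tau$, so by the argument of Lemma~\ref{lem:boundedTransitions} they form a chain of bounded length.
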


Before proving Proposition~\ref{prop:approximation}, we need two preliminary lemmas.

\begin{lemma}\label{lem:LZ_int_bound}
For all sufficiently small $h, \tau > 0$ and for any non-negative Borel function $g \colon \Omega(h)\cap \mathbb{R} \to \mathbb{R}$, we have
\[
\int_{\Lambda} (\mathcal{L}_{\tau,\delta} g) \, d\mu_\delta \ll \log(\tau^{-1}) \int_{\Lambda} g\, d\mu_\delta.
\]
\end{lemma}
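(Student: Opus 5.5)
We want $\int_\Lambda \mathcal L_{\tau,\delta} g\,d\mu_\delta \ll \log(\tau^{-1})\int_\Lambda g\,d\mu_\delta$. The plan is to compare $\mathcal L_{\tau,\delta}$ with iterates of the standard transfer operator $\mathcal L_\delta$, for which the measure $\mu_\delta$ is invariant.

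\emph{Step 1: invariance of $\mu_\delta$.} By Proposition~\ref{prop:RPF} with $P(\delta)=0$ we have $\mathcal L_\delta^*\mu_\delta=\mu_\delta$, hence $\int_\Lambda \mathcal L_\delta^n g\,d\mu_\delta=\int_\Lambda g\,d\mu_\delta$ for all $n$. This identity is first available for continuous $g$. It extends to non-negative Borel $g$ by monotone class arguments, since $\mathcal L_\delta^n g(x)=\sum \gamma_{\Aa}'(x)^\delta g(\gamma_\Aa(x))$ is a finite positive sum of compositions with the branches $\gamma_\Aa$. Only values of $g$ on $\Lambda$ enter, because $\gamma_\Aa(\Lambda\cap I_b)\subset\Lambda$.

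\emph{Step 2: termwise domination.} For $x\in\Lambda\cap I_b$ we have
\[
\mathcal L_{\tau,\delta} g(x)=\sum_{\Aa\in Y_b(\tau)}\gamma_\Aa'(x)^\delta g(\gamma_\Aa(x)).
\]
By Lemma~\ref{lem:YZ}(iii), every $\Aa\in Y(\tau)$ has length $|\Aa|\in[c_1\log\tau^{-1},c_2\log\tau^{-1}]$. Split the sum according to $n=|\Aa|$. For fixed $n$, the words $\Aa\in Y_b(\tau)$ of length $n$ are distinct reduced words with $\Aa\to b$, so they form a subset of the branches appearing in $\mathcal L_\delta^n g(x)=\sum_{\Aa\in\mathcal W_n,\Aa\to b}\gamma_\Aa'(x)^\delta g(\gamma_\Aa(x))$. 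Since all terms are non-negative,
\[
\mathcal L_{\tau,\delta} g(x)\le \sum_{n=c_1\log\tau^{-1}}^{c_2\log\tau^{-1}} \mathcal L_\delta^n g(x).
\]

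\emph{Step 3: conclusion.} Integrating against $\mu_\delta$ and applying Step~1 gives
\[
\int_\Lambda \mathcal L_{\tau,\delta} g\,d\mu_\delta\le (c_2-c_1)\log(\tau^{-1})\int_\Lambda g\,d\mu_\delta + O\!\left(\int_\Lambda g\,d\mu_\delta\right),
\]
which is the claimed bound.

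The only delicate point is the bookkeeping in Step~2: one must check that each $\Aa\in Y_b(\tau)$ appears exactly once as a branch of $\mathcal L^{|\Aa|}_\delta$ acting at $x\in I_b$, with the same weight $\gamma_\Aa'(x)^\delta$. The condition $\Aa b\in\overline{Z(\tau)}$ forces $\Aa\to b$, so this holds. I expect the Borel extension in Step~1 to be routine.
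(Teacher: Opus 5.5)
Your proposal is correct and follows the paper's proof essentially line for line. Both arguments bound $\mathcal L_{\tau,\delta} g$ pointwise on $\Lambda$ by $\sum_{n\asymp\log(\tau^{-1})}\mathcal L_\delta^n g$, using Lemma~\ref{lem:YZ}(iii) and $g\ge0$, and then integrate using $\mathcal L_\delta^*\mu_\delta=\mu_\delta$ extended from continuous to non-negative Borel $g$. Your explicit check that each $\Aa\in Y_b(\tau)$ satisfies $\Aa\to b$, and so appears exactly once as a branch of $\mathcal L_\delta^{|\Aa|}$ with the same weight, is a detail the paper leaves implicit.
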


\begin{proof}[Proof of Lemma~\ref{lem:LZ_int_bound}]
By Lemma~\ref{lem:YZ}(iii), every $\Aa \in Y(\tau)$ has word length $|\Aa| \asymp \log(\tau^{-1})$, so $Y(\tau) \subset \bigcup_{n \asymp \log(\tau^{-1})} \mathcal{W}_n$. Since $g \ge 0$, this gives the pointwise bound
\[
(\mathcal{L}_{\tau,\delta} g)(x) \le \sum_{n \asymp \log(\tau^{-1})} (\mathcal{L}_{\delta}^n g)(x), \quad x \in \Lambda.
\]
Since $P(\delta) = 0$, Proposition~\ref{prop:RPF} gives $\mathcal{L}_\delta^* \mu_\delta = \mu_\delta$, hence $\int_\Lambda \mathcal{L}_\delta^n g \, d\mu_\delta = \int_\Lambda g \, d\mu_\delta$ for all $n \ge 0$. This identity holds for continuous $g$. Both sides define finite Borel measures in $g$, so it extends to all non-negative Borel functions. Integrating the pointwise bound and using that the sum contains $O(\log(\tau^{-1}))$ terms gives the result.
\end{proof}

We also need the following lemma, which is reminiscent of the Remez inequality \cite{Remez1936}.

\begin{lemma}[Remez-type inequality]\label{lem:polynomial-norming}
Fix $K\ge1$, $C_0>0$, $0<\delta<1$, and an integer $N\ge1$. There exists $C_{K,C_0,\delta,N}>0$ such that, for every probability measure $\nu$ supported on $[-K,K]$ satisfying
\begin{equation}\label{eq:measure_condition}
\nu((u-r,u+r))\le C_0r^\delta,\qquad u\in\mathbb R,\quad 0<r\le1,
\end{equation}
and every complex polynomial $p$ of degree at most $N-1$, we have
\[
\sup_{\substack{w\in\mathbb C\\|w|\le K}}|p(w)|^2 \le C_{K,C_0,\delta,N}\int |p(y)|^2\,d\nu(y).
\]
\end{lemma}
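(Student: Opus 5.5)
The plan is a compactness argument on the finite-dimensional space $\mathcal P_{N-1}$ of complex polynomials of degree at most $N-1$. Both sides of the desired inequality are homogeneous of degree two in $p$. So it suffices to show that
\[
\inf\Big\{ \int |p|^2\,d\nu \;:\; \nu \in \mathcal M,\ p\in\mathcal P_{N-1},\ \sup_{|w|\le K}|p(w)|=1 \Big\} > 0,
\]
where $\mathcal M$ denotes the class of probability measures on $[-K,K]$ satisfying \eqref{eq:measure_condition}. I would use the sup norm on the disk $\{|w|\le K\}$ as the norm on $\mathcal P_{N-1}$. Its unit sphere $S$ is compact, since $\mathcal P_{N-1}$ is finite-dimensional.

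The key step is a uniform lower bound for a single fixed polynomial, proved by a quantitative argument. A nonzero $p$ of degree at most $N-1$ has at most $N-1$ zeros, say $z_1,\dots,z_k$ with $k\le N-1$. Fix $\rho\in(0,1]$ and consider the exceptional set $E_\rho=\bigcup_j (\mathrm{Re}\, z_j-\rho,\mathrm{Re}\, z_j+\rho)$. By \eqref{eq:measure_condition},
\[
\nu(E_\rho)\le (N-1)C_0\rho^\delta .
\]
Choosing $\rho=\rho(N,C_0,\delta)$ with $(N-1)C_0\rho^\delta\le 1/2$ gives $\nu([-K,K]\setminus E_\rho)\ge 1/2$. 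On $[-K,K]\setminus E_\rho$ every real point $y$ satisfies $|y-z_j|\ge\rho$ for all $j$. Writing $p(y)=c\prod_j(y-z_j)$, this gives $|p(y)|\ge |c|\rho^{k}$ there.

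It remains to relate $|c|$ to $\sup_{|w|\le K}|p|=1$, and this is the main obstacle. Zeros may lie far from the disk, and the leading coefficient can then be tiny relative to the sup norm. I would handle this by splitting the factors. Zeros with $|z_j|\le 2K+1$ are "near" zeros. For the remaining "far" zeros, $|y-z_j|\asymp|w-z_j|$ uniformly for $y\in[-K,K]$ and $|w|\le K$; indeed the ratio lies in $[1/3,3]$. For near zeros, $|w-z_j|\le 3K+1$ for $|w|\le K$. Hence for $y\in[-K,K]\setminus E_\rho$ and any $|w|\le K$,
\[
\frac{|p(y)|}{|p(w)|}\ge \prod_{\text{near}}\frac{\rho}{3K+1}\prod_{\text{far}}\frac13\ge \Big(\frac{\rho}{3(3K+1)}\Big)^{N-1}=:c_*.
\]
Taking the sup over $w$ gives $|p(y)|\ge c_*$ on a set of $\nu$-measure at least $1/2$. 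This yields
\[
\int|p|^2\,d\nu\ge \tfrac12 c_*^2,
\]
and hence the lemma with $C_{K,C_0,\delta,N}=2c_*^{-2}$.

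This direct argument avoids compactness altogether, so the soft argument is only a fallback. Degenerate cases need a remark. If $p$ is constant the bound is trivial. The zeros need not be real, but projecting to $\mathrm{Re}\, z_j$ only helps, since $|y-z_j|\ge|y-\mathrm{Re}\, z_j|$ for real $y$. Finally, \eqref{eq:measure_condition} is used only for the single radius $\rho\le1$.
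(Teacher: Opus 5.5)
Your proof is correct, and it takes a genuinely different route from the paper.

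The paper argues by contradiction. It takes normalized polynomials $p_j$ and admissible measures $\nu_j$ with $\int|p_j|^2\,d\nu_j\to0$. It extracts a uniformly convergent subsequence of the $p_j$, using finite-dimensionality, and a weakly convergent subsequence of the $\nu_j$, using Prokhorov. The Portmanteau theorem shows that the limit $\nu$ still satisfies the bound on open intervals, so $\nu$ has no atoms. Then $\int|p|^2\,d\nu=0$ would force the atomless $\nu$ onto the finite zero set of the nonzero limit $p$, which is impossible.

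You instead prove the bound directly. Deleting the $\rho$-neighborhoods of the real parts of the at most $N-1$ zeros costs $\nu$-mass at most $(N-1)C_0\rho^\delta\le\frac12$. Comparing $|p(y)|$ with $|p(w)|$ factor by factor makes the leading coefficient cancel, and the near/far splitting handles zeros far from the disk (the far-zero ratio bound $(|z_j|-K)/(|z_j|+K)\ge 1/3$ checks out).

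Your route gives an explicit constant,
$C_{K,C_0,\delta,N}=2\bigl(3(3K+1)/\rho\bigr)^{2(N-1)}$ with $\rho=\min\{1,(2(N-1)C_0)^{-1/\delta}\}$. This makes the dependence on $K$, $C_0$, $\delta$ visible and shows it is exponential in $N$. It also shows that the Frostman-type condition is needed at a single scale only, whereas the paper uses it at all scales $r\downarrow0$ to rule out atoms in the limit. The paper's soft argument is shorter but says nothing about the size of the constant.

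In the paper this difference is immaterial, since $N$ is fixed in terms of $\epsilon$ and the constant is absorbed into $\ll_{N,\epsilon}$. Your quantitative version would be the one needed if $N$ were allowed to grow with $T$.
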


\begin{proof}
Suppose the statement is false. Then there exists a sequence of measures $\nu_j$ satisfying \eqref{eq:measure_condition} and a sequence of polynomials $p_j$ such that
\[
\sup_{|w|\le K}|p_j(w)|=1,\qquad \int|p_j|^2\,d\nu_j \to 0, \quad j\to \infty.
\]
After passing to a subsequence, we may assume that $p_j$ converges uniformly on the closed complex disk $\{|w|\le K\}$ to some polynomial $p$ with
\begin{equation}\label{eq:limit_normalization}
\sup_{|w|\le K}|p(w)|=1,
\end{equation}
and that $\nu_j$ converges weakly to some probability measure $\nu$ supported on $[-K,K]$. The first claim holds because polynomials of degree at most $N-1$ form a finite-dimensional space. The second follows from Prokhorov's theorem, since all $\nu_j$ are probability measures on the same compact interval $[-K,K]$. For every open interval $(u-r,u+r)$, the Portmanteau theorem gives
\[
\nu((u-r,u+r)) \le\liminf_{j\to\infty}\nu_j((u-r,u+r)) \le C_0r^\delta,\qquad 0<r\le1.
\]
Consequently,
\[
\nu(\{u\})\le C_0r^\delta,\qquad 0<r\le1.
\]
Letting $r\downarrow0$, using $\delta>0$, shows that $\nu$ has no atoms.

By the triangle inequality, we have
\[
\left|\int|p_j|^2\,d\nu_j-\int|p|^2\,d\nu\right| \le\sup_{y\in[-K,K]}
\bigl||p_j(y)|^2-|p(y)|^2\bigr| + \left|\int|p|^2\,d\nu_j-\int|p|^2\,d\nu\right|.
\]
The first term tends to zero by uniform convergence of $p_j$ to $p$. The second tends to zero by weak convergence, since $|p|^2$ is continuous on $[-K,K]$. Therefore, letting $j\to\infty$ in the inequality above gives
\begin{equation}
\int|p|^2\,d\nu=0.
\end{equation}
However, by \eqref{eq:limit_normalization}, $p$ is a non-zero polynomial of degree at most $N-1$, so the above equality can only be true if $\nu$ is supported on the zero set of $p$. But $\nu$ has no atoms, and a non-zero polynomial has only finitely many zeros, so this is impossible.
\end{proof}

\begin{proof}[Proof of Proposition~\ref{prop:approximation}]
Assume that $h>0$ is sufficiently small so that $\Omega(h)\subset D$. Let $D_{\ell}(h)$, $\ell = 1, \dots, N(h)$, be the disks comprising $\Omega(h)$ and let $I_\ell(h) = D_\ell(h) \cap \mathbb{R}$. Fix $\ell$ and let $b=b_\ell\in\mathcal A$ be the unique index for which $D_\ell(h)\subset D_b$. Fix $x_\ell\in\Lambda\cap I_\ell(h)$ and $\Aa\in Y_b(\tau)$. Since $(x_\ell-h,x_\ell+h)\subset I_\ell(h)$ and $\operatorname{diam}(I_\ell(h))\ll h$, Ahlfors regularity gives $\mu_\delta(I_\ell(h))\asymp h^\delta$. Set
\[
c_{\Aa,\ell}=\gamma_\Aa(x_\ell)\in\Lambda,\qquad
T_{\Aa,\ell}(x)=
\frac{\gamma_\Aa(x)-c_{\Aa,\ell}}{\tau h}.
\]
Integrating $\gamma_\Aa'$ along the segment from $x_\ell$ to $z\in D_\ell(h)$ gives
\[
|T_{\Aa,\ell}(z)| \le \sup_{w \in D_\ell(h)} |\gamma_\Aa'(w)| / (\tau h)\cdot \mathrm{diam}(D_\ell(h)) \le K,
\]
for some constant $K\ge1$ independent of $h,\tau,\ell,\Aa$, or equivalently,
\begin{equation}\label{eq:branch-image-disk}
\gamma_\Aa(D_\ell(h))
\subset \overline{D}(c_{\Aa,\ell},K\tau h).
\end{equation}
Define the probability measure
\[
\nu_{\Aa,\ell}
=(T_{\Aa,\ell})_*
\left(\frac{\mu_\delta|_{I_\ell(h)}}{\mu_\delta(I_\ell(h))}\right).
\]
It is supported on $[-K,K]$. Note that \(T_{\Aa,\ell}'(x)\asymp h^{-1}\) on $I_\ell(h)$, so the preimage within $I_\ell(h)$ of an interval of radius $r$ has length $O(rh)$. By Ahlfors regularity of $\mu_\delta$, which gives $\mu_\delta(J)\ll|J|^\delta$ for every interval $J$ with $|J|\le1$ (if $J$ meets $\Lambda$ at $y$, then $J\subset(y-|J|,y+|J|)$), we then have
\[
\nu_{\Aa,\ell}((u-r,u+r))\ll r^\delta,\qquad 0<r\le1,\quad u\in\mathbb R,
\]
uniformly in $\Aa,\ell,h,\tau$. We can therefore apply Lemma \ref{lem:polynomial-norming} to obtain for every polynomial $p$ of degree at most $N-1$,
\begin{align*}
\sup_{z\in D_\ell(h)}|p(T_{\Aa,\ell}(z))|^2 &\leq \sup_{|w|\le K}|p(w)|^2\\
&\ll_N\int |p(y)|^2\,d\nu_{\Aa,\ell}(y)\\
&= \frac{1}{\mu_\delta(I_\ell(h))} \int_{I_\ell(h)\cap \Lambda} |p(T_{\Aa,\ell}(x))|^2\,d\mu_\delta(x).
\end{align*}
Applying this to the polynomial \(q(y)=p(c_{\Aa,\ell}+\tau h y)\), and using $\mu_\delta(I_\ell(h))\asymp h^{\delta},$ we obtain
\begin{equation}\label{eq:branch-polynomial-norming}
\sup_{z\in D_\ell(h)} |p(\gamma_\Aa(z))|^2 \ll_N h^{-\delta}\int_{I_\ell(h)\cap \Lambda} |p(\gamma_\Aa(x))|^2\,d\mu_\delta(x).
\end{equation}

Let $p_{\Aa,\ell}$ be the Taylor polynomial of $f$ at $c_{\Aa,\ell}$ of degree $N-1$. This polynomial approximates $f$ near $c_{\Aa,\ell}$ up to an error that can be estimated as follows:
\[
|f(w)-p_{\Aa,\ell}(w)| \leq \sum_{k=N}^{\infty} \frac{|f^{(k)}(c_{\Aa,\ell})|}{k!} |w-c_{\Aa,\ell}|^k, \qquad w\in \mathbb{C}.
\]
Note that the point $c_{\Aa,\ell} = \gamma_\Aa(x_\ell)\in \Lambda$ has distance at least $h$ from the boundary of $\Omega(h)$. Indeed, the real diameter of the component of $\Omega(h)$ containing $c_{\Aa,\ell}$ contains $(c_{\Aa,\ell}-h,c_{\Aa,\ell}+h)$, so this component, being a disk centered on $\mathbb{R}$, contains $D(c_{\Aa,\ell},h)$. We can therefore apply Lemma \ref{lem:bergman_estimates} to get
\[
|f^{(k)}(c_{\Aa,\ell})| \leq C \frac{2^k k!}{h^{k+1}} \|f\|_{L^2(\Omega(h))}
\]
for some absolute constant $C>0$. Thus, for all $w$ with $|w-c_{\Aa,\ell}|\le K\tau h$, with $K\tau<1/4$, we can estimate the Taylor remainder as
\begin{align*}
|f(w)-p_{\Aa,\ell}(w)|
&\le C \|f\|_{L^2(\Omega(h))}\sum_{k=N}^\infty  \frac{2^k}{h^{k+1}}  |w-c_{\Aa,\ell}|^k\\
&\le C \|f\|_{L^2(\Omega(h))} \sum_{k=N}^\infty \frac{(2K\tau h)^k}{h^{k+1}}\\
&= C h^{-1} \|f\|_{L^2(\Omega(h))} \sum_{k=N}^\infty (2K\tau)^k\\
&= C h^{-1} \|f\|_{L^2(\Omega(h))} \frac{(2K\tau)^N}{1-2K\tau}\\
&\le 2 C h^{-1} \|f\|_{L^2(\Omega(h))} (2K\tau)^N.
\end{align*}
Hence, going back to \eqref{eq:branch-polynomial-norming}, replacing $p$ by the Taylor polynomial $p_{\Aa,\ell}$, and using $|p_{\Aa,\ell}|^2\le 2|f|^2+2|f-p_{\Aa,\ell}|^2$ on $\gamma_\Aa(I_\ell(h))$ together with $\mu_\delta(I_\ell(h))\ll h^\delta$, we obtain
\begin{equation}\label{eq:branch-taylor-norming}
\sup_{z\in D_\ell(h)} |f(\gamma_\Aa(z))|^2 \ll_N h^{-\delta}\int_{I_\ell(h)\cap \Lambda} |f(\gamma_\Aa(x))|^2\,d\mu_\delta(x) + h^{-2} \tau^{2N} \|f\|_{L^2(\Omega(h))}^2.
\end{equation}
Note that the factor $(2K)^{2N}$ is absorbed into the implicit constant $\ll_N$.

By Cauchy--Schwarz and the bounds in
Lemmas~\ref{lem:derivative_bounds_omega} and~\ref{lem:YZ},
\[ 
|\mathcal L_{\tau,s}f(z)|^2 \ll E\tau^{-\delta+2\sigma} \sum_{\Aa\in Y_{b_\ell}(\tau)}|f(\gamma_\Aa(z))|^2, \qquad z\in D_\ell(h), 
\]
where $E = e^{C (h|t| + |\sigma|)}$ for some constant $C = C(\Gamma)>0$. Using \eqref{eq:branch-taylor-norming}, we get for all $z\in D_\ell(h)$
\begin{align*}
|\mathcal L_{\tau,s}f(z)|^2
&\ll_N E\tau^{-\delta+2\sigma}h^{-\delta}
\int_{I_\ell(h)\cap\Lambda}\sum_{\Aa\in Y_{b_\ell}(\tau)}|f(\gamma_\Aa(x))|^2\,d\mu_\delta(x)\\
&\quad+E\tau^{-2\delta+2\sigma+2N}h^{-2}\|f\|_{L^2(\Omega(h))}^2\\
&\ll_N E\tau^{-2\delta+2\sigma}h^{-\delta}
\int_{I_\ell(h)\cap\Lambda}\mathcal L_{\tau,\delta}|f|^2\,d\mu_\delta\\
&\quad+E\tau^{-2\delta+2\sigma+2N}h^{-2}\|f\|_{L^2(\Omega(h))}^2.
\end{align*}
Integrating this over $z\in D_\ell(h)$ with respect to the Lebesgue measure on the complex plane and noting that $\vol(D_{\ell}(h))\ll h^2$, yields
$$
\Vert \mathcal L_{\tau,s}f \Vert_{L^2(D_\ell(h))}^2 \ll_N  E\tau^{-2\delta+2\sigma} h^{2-\delta} \int_{I_\ell(h)\cap \Lambda} \mathcal{L}_{\tau,\delta} |f|^2\,d\mu_\delta +  E\tau^{-2\delta+2\sigma+2N}\|f\|_{L^2(\Omega(h))}^2.
$$
Summing this over all $\ell$ gives
$$
\Vert \mathcal L_{\tau,s}f \Vert_{L^2(\Omega(h))}^2 \ll_N  E\tau^{-2\delta+2\sigma} h^{2-\delta} \int_{\Lambda} \mathcal{L}_{\tau,\delta} |f|^2\,d\mu_\delta +  E\tau^{-2\delta+2\sigma+2N} h^{-\delta} \|f\|_{L^2(\Omega(h))}^2.
$$
Finally, Lemma~\ref{lem:LZ_int_bound} gives for any $\epsilon > 0$,
$$
\int_{\Lambda} \mathcal{L}_{\tau,\delta} |f|^2\,d\mu_\delta \ll_\epsilon \tau^{-\epsilon} \int_{\Lambda} |f|^2\,d\mu_\delta.
$$
Inserting this into the previous estimate gives
$$
\Vert \mathcal L_{\tau,s}f \Vert_{L^2(\Omega(h))}^2 \ll_{N,\epsilon}    E\tau^{-2\delta+2\sigma-\epsilon} h^{2-\delta} \int_{\Lambda} |f|^2\,d\mu_\delta +  E\tau^{-2\delta+2\sigma+2N} h^{-\delta} \|f\|_{L^2(\Omega(h))}^2.
$$
Replacing $N$ with $\lceil\frac{1}{2}(N + 2\delta)\rceil$ gives the simpler estimate
 $$
\Vert \mathcal L_{\tau,s}f \Vert_{L^2(\Omega(h))}^2 \ll_{N,\epsilon} E\tau^{-2\delta+2\sigma-\epsilon} h^{2-\delta} \int_{\Lambda} |f|^2\,d\mu_\delta +  E\tau^{N + 2\sigma} h^{-\delta} \|f\|_{L^2(\Omega(h))}^2,
$$
which completes the proof.
\end{proof}

\subsection{Applying the Fourier decay bound}\label{sec:BD_bound}
The last major technical ingredient in the proof of Proposition~\ref{prop:HS_bound} is the following estimate:

\begin{prop}[Integral bound]\label{prop:integral_bound}
Let $\{\psi_m\}$ be an orthonormal basis of $H^2(\Omega(h))$, with $h>0$ sufficiently small. Set
\begin{equation*}
\tau'' := C_{\mathrm{sep}} h, \quad \tau' := T^{-\beta_{\mathrm{F}}/(2\delta)},
\end{equation*}
where $C_{\mathrm{sep}} > 0$ is the constant from Lemma \ref{lem:separation} and $\beta_{\mathrm{F}} > 0$ is the Fourier decay exponent defined after Theorem~\ref{thm:BD_fourier_decay}. Then for all $s = \sigma + it \in \mathbb{C}$ with $T/2 \le |t| \le 2T$, $T$ sufficiently large, we have
\[
\sum_{m\in \mathbb{N}} \int_{\Lambda} |\mathcal{L}_{\tau', \tau''; s} \psi_m|^2 \, d\mu_\delta \ll e^{C|\sigma|} h^{-2} (\tau'\tau'')^{-\delta + 2\sigma},
\]
where $\mathcal{L}_{\tau', \tau''; s} := \mathcal{L}_{\tau',s} \mathcal{L}_{\tau'',s}.$
\end{prop}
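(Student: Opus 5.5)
We expand the composition explicitly. For $x\in\Lambda\cap I_b$, write
\[
\mathcal{L}_{\tau',\tau'';s}\psi_m(x)=\sum_{\Aa\in Y_b(\tau')}\ \sum_{\Bb\in Y_{\Aa}(\tau'')}\gamma_{\Aa\Bb}'(x)^s\,\psi_m(\gamma_{\Aa\Bb}(x)),
\]
where $\Bb$ ranges over $Y_c(\tau'')$ and $c$ is the letter with $\gamma_\Aa(x)\in D_c$; by the chain rule, $\gamma_\Aa'(x)^s\gamma_\Bb'(\gamma_\Aa x)^s=\gamma_{\Aa\Bb}'(x)^s$.

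\textbf{Step 1: sum over the basis.} Squaring and summing over $m$, the identity $\sum_m\psi_m(z)\overline{\psi_m(w)}=B_{\Omega(h)}(z,w)$ gives
\[
\sum_m|\mathcal{L}_{\tau',\tau'';s}\psi_m(x)|^2=\sum_{(\Aa,\Bb),(\Aa_1,\Bb_1)}\gamma_{\Aa\Bb}'(x)^s\,\overline{\gamma_{\Aa_1\Bb_1}'(x)^s}\,B_{\Omega(h)}\bigl(\gamma_{\Aa\Bb}(x),\gamma_{\Aa_1\Bb_1}(x)\bigr).
\]
The Bergman kernel vanishes unless both points lie in the same component of $\Omega(h)$. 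Since $\tau''=C_{\mathrm{sep}}h$, Lemma~\ref{lem:separation} (applied to the inner words, after factoring out $\gamma_\Aa$, or directly to the composed words) forces $\Bb\sim_{\mathrm{pre}}\Bb_1$. By Lemma~\ref{lem:boundedTransitions}, $\Bb_1$ is then determined by $\Bb$ up to $O(1)$ choices. The points $\gamma_{\Aa\Bb}(x)\in\Lambda$ lie at distance $\ge h$ from $\partial\Omega(h)$, so Lemma~\ref{lem:UpperBoundBergman} gives $|B|\ll h^{-2}$. In addition, $|\gamma'_{\Aa\Bb}(x)^\sigma|\asymp(\tau'\tau'')^\sigma$.

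\textbf{Step 2: integrate over $\Lambda$ and isolate the oscillation.} Integrating against $\mu_\delta$, the sum splits according to the outer pair $(\Aa,\Aa_1)\in Y(\tau')^2$. The diagonal-type pairs, where $\Aa\sim_{\mathrm{suf}}\Aa_1$ or $\Aa=\Aa_1$ (up to $O(1)$ ambiguity), contribute at most
\[
e^{C|\sigma|}h^{-2}(\tau'\tau'')^{2\sigma}\,|Y(\tau')|\,|Y(\tau'')|\ll e^{C|\sigma|}h^{-2}(\tau'\tau'')^{2\sigma-\delta}.
\]
This is exactly the claimed bound. For the off-diagonal pairs, write the product of derivatives as $e^{-it\Phi(x)}g(x)$, where $\Phi=\Phi_{\Aa\Bb,\Aa_1\Bb_1}$ is restricted to the reals and the amplitude $g$ collects the moduli and the Bergman kernel. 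After rescaling by $h^2(\tau'\tau'')^{-2\sigma}$, the amplitude $g$ is $O(1)$ in $C^1$ on the relevant cylinder; this uses \eqref{DerivBergmanineq}, bounded distortion, and the fact that $x\mapsto\gamma_{\Aa\Bb}(x)$ has derivative $\ll\tau'\tau''\ll h$. By Proposition~\ref{prop:phaseDerivative}, $|\Phi'|\asymp\mathcal{D}$.

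\textbf{Step 3: apply Fourier decay.} The cleanest route is to localize $x$ to the cylinder $I_\Cc$ with $\Cc\in Z$ at some scale. Rescaling by $\gamma_\Cc$ and using the conformality of $\mu_\delta$ reduces each off-diagonal integral to an integral to which Theorem~\ref{thm:BD_fourier_decay} applies, with frequency $\xi\asymp|t|\mathcal{D}$. This yields a gain of $(T\mathcal{D})^{-\beta_{\mathrm{F}}/2}$, at least when $\mathcal{D}$ is not too small. Summing over the $\ll|Y(\tau')|^2$ off-diagonal pairs $(\Aa,\Aa_1)$ gives roughly
\[
|Y(\tau')|^2\,T^{-\beta_{\mathrm{F}}/2}\asymp\tau'^{-2\delta}\,T^{-\beta_{\mathrm{F}}/2}.
\]
With $\tau'=T^{-\beta_{\mathrm{F}}/(2\delta)}$ we have $\tau'^{-\delta}=T^{\beta_{\mathrm{F}}/2}$, so this is $\ll\tau'^{-\delta}$. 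Thus the off-diagonal part matches the diagonal one, which is presumably why $\tau'$ is chosen this way.

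\textbf{Main obstacle.} The hard step is making Step 3 uniform. Pairs with small $\mathcal{D}_{\Aa,\Aa_1}$, meaning long common suffixes (or prefixes after reduction), have weak oscillation. These must be counted separately, for example by grouping pairs according to $\Upsilon_{\mathsf{red}(\Aa\overline{\Aa_1})}$ and checking that the number of pairs with $\mathcal{D}\approx2^{-j}$ is small enough to compensate for the loss $2^{j\beta_{\mathrm{F}}/2}$. Concretely, since $\mathcal{D}^2=\Upsilon_\Aa\Upsilon_{\Aa_1}/\Upsilon_{\mathsf{red}(\Aa\overline{\Aa_1})}$, a pair with $\mathcal{D}\approx2^{-j}$ shares a common suffix of scale $\approx2^{-2j}$-related size. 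The count of such pairs should be $\tau'^{-2\delta}2^{-c j}$ with $c$ tied to $\delta$, which should beat the loss $2^{j\beta_{\mathrm{F}}/2}$ because $\beta_{\mathrm{F}}\le\delta$.

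A second issue is verifying the $C^2$ bound on the phase and the $C^1$ bound on the amplitude needed in the hypotheses of Theorem~\ref{thm:BD_fourier_decay}, which must hold after the rescaling to unit scale. Finally, the inner $\Bb$-sums are combined using only the prefix-comparability from Step 1; no cancellation is extracted from them.
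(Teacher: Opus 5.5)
Your architecture matches the paper's proof: the Bergman-kernel expansion; Lemma~\ref{lem:separation} at $\tau''=C_{\mathrm{sep}}h$, forcing the fine-scale words to be prefix-comparable; trivial counting of the suffix-comparable pairs (the paper's $\Sigma_1$); and Fourier decay at frequency $|t|\mathcal{D}$ for the rest, with $\tau'$ tuned so that $(\tau')^{-\delta}T^{-\beta_{\mathrm{F}}/2}=O(1)$. As written, however, the off-diagonal bound is not established, because the frequency step is assumed rather than proved.

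First, $\gamma_\Bb\circ\gamma_\Aa=\gamma_{\Bb\Aa}$, so the $\tau''$-word is the \emph{prefix}. The surviving off-diagonal pairs are therefore $(\Cc\Aa_2,\Cc\Bb_2)$ and $(\Cc\Aa_2,\Cc\mathbf{v}\Bb_2)$, with $\Cc\in Y(\tau'')$, $\mathbf{v}\in F$ and $\Aa_2\nsim_{\mathrm{suf}}\Bb_2$. For these pairs the only a priori bound is $\mathcal{D}\gg(\Upsilon_\Aa\Upsilon_\Bb)^{1/2}\asymp\tau'\tau''$, which can be far below $T^{-1}$ and would give no gain. Your silent replacement of $\mathcal{D}_{\Aa\Bb,\Aa_1\Bb_1}$ by $\mathcal{D}_{\Aa,\Aa_1}$ in the last paragraph is precisely the point that needs proof.

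The paper proves it as follows. Let $\Dd$ be the maximal common suffix and write $\Aa_2=\Ee\Dd$, $\Bb_2=\Ff\Dd$. Then $\Cc\Ee\overline{\Ff}\,\overline{\Cc}$ is already reduced and equals $\mathsf{red}(\Aa\overline{\Bb})$. By Lemma~\ref{lem:boundsderivatives}(iv) the factor $\Upsilon_\Cc^2$ cancels, giving $\mathcal{D}\asymp\tau'\Upsilon_{\Ee\overline{\Ff}}^{-1/2}\asymp\Upsilon_\Dd$. In the $\mathbf{v}$-case one first checks that $\Aa_2\nsim_{\mathrm{suf}}\mathbf{v}\Bb_2$. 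So only common suffixes lower $\mathcal{D}$, and $\mathcal{D}\asymp\Upsilon_\Dd$, not a $2^{-2j}$-type scale.

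No cylinder localization is then needed. The normalized phase $\varphi=(\Phi-\Phi(x_0))/\mathcal{D}$ satisfies $|\varphi'|\asymp 1$ on $I_b$ by Proposition~\ref{prop:phaseDerivative}. It also satisfies $|\varphi''|\ll 1$ by the identity $\Phi''=-\Phi'\bigl((x-x_\Aa)^{-1}+(x-x_\Bb)^{-1}\bigr)$. Hence Corollary~\ref{cor:BD_fourier_decay_cor} applies uniformly on all of $I_b$, and trivially when $\mathcal{D}|t|\le 1$. The amplitude is $C^1$ on all of $I_b$, not just on $\Lambda$: $\gamma_\Aa(I_b)$ lies within $O(h\tau')$ of a point of $\Lambda$, hence at distance $\gg h$ from $\partial\Omega(h)$.

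The second missing piece is the weighted count you leave as ``should''. It is Lemma~\ref{lem:sum_of_Upsilons} with $\alpha=\beta_{\mathrm{F}}/4$, namely $\sum_{\Aa_2,\Bb_2\in Y(\tau')}\Upsilon_{\mathsf{red}(\Aa_2\overline{\Bb_2})}^{\beta_{\mathrm{F}}/4}\ll(\tau')^{-2\delta+\beta_{\mathrm{F}}/2}$. Its proof factors each pair through its maximal common suffix $\Dd$ and counts the pairs $(\Ee,\Ff)$ with the orbit bound $\#\{\Aa:\Upsilon_\Aa>u\}\ll u^{-\delta}$. It then uses that $\sum_{\Dd}\Upsilon_\Dd^{2\delta-\beta_{\mathrm{F}}/2}$ converges because $\beta_{\mathrm{F}}/2<\delta$. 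This is your dyadic heuristic with $c=\delta$, made rigorous. Combined with the above and with $|Y(\tau'')|\ll(\tau'')^{-\delta}$ for the common prefix, it gives $\Sigma_2+\Sigma_3\ll e^{C|\sigma|}h^{-2}(\tau'\tau'')^{2\sigma}(\tau'')^{-\delta}(\tau')^{-2\delta}|t|^{-\beta_{\mathrm{F}}/2}$. The choice of $\tau'$ then finishes the proof.
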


It is here that we apply the Bourgain--Dyatlov Fourier decay bound from Theorem~\ref{thm:BD_fourier_decay}. To apply this result, we first localize to the intervals $I_b$ and replace the Patterson--Sullivan measure $\mu$ by the Ruelle--Perron--Frobenius measure $\mu_\delta$ from Proposition~\ref{prop:RPF}.

\begin{lemma}[Localized Bourgain--Dyatlov estimate]\label{lem:localized_BD}
For $b\in\mathcal A$, let $\varphi\in C^2(I_b;\mathbb R)$ such that
\[
\|\varphi\|_{C^2(I_b)}\le C_0,
\qquad
\inf_{\Lambda\cap I_b}|\varphi'|\ge C_0^{-1}.
\]
Then, for every $g\in C^1(I_b;\mathbb C)$,
\[
\left|\int_{\Lambda\cap I_b}e^{i\xi\varphi(x)}g(x)\,d\mu_\delta(x)\right|
\ll_{C_0}|\xi|^{-\beta_{\mathrm{F}}/2}\|g\|_{C^1(I_b)},
\qquad |\xi|>1.
\]
\end{lemma}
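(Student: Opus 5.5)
The plan is to reduce to Theorem~\ref{thm:BD_fourier_decay} by three changes: extend $\varphi$ and $g$ from $I_b$ to all of $\mathbb{R}$, replace $\mu_\delta$ by the Patterson--Sullivan measure $\mu$, and localize to $I_b$.

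\emph{Step 1: relating the measures.} By the relation between $\mu_\delta$ and the Patterson--Sullivan measure recorded after Proposition~\ref{prop:RPF} (\cite[Lemma~4.7]{JNS20}), I expect $d\mu_\delta = \rho\, d\mu$ on $\Lambda$. Here $\rho$ should be a positive function which is real-analytic (in particular $C^1$) on a neighbourhood of each $\overline{I_b}$, with $\|\rho\|_{C^1}\ll 1$. Concretely, the density should be built from the Poisson-kernel factor $(1+x^2)^{\delta}$ and the RPF eigenfunction $\varphi_\delta$, or its reciprocal. If that lemma only gives $\mu_\delta\asymp\mu$, I would instead argue directly. Conformality of $\mu$ under $\Gamma$ shows that $\mu$ multiplied by a smooth explicit density is a $\delta$-conformal measure for the Bowen--Series map. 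Uniqueness in Proposition~\ref{prop:RPF} then identifies it with $\mu_\delta$, giving a smooth density. With this, $\int_{\Lambda\cap I_b} e^{i\xi\varphi} g\, d\mu_\delta = \int_\Lambda e^{i\xi\varphi}\, g\rho\chi_b\, d\mu$.

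\emph{Step 2: localization and extension.} The set $\Lambda\cap I_b$ is compact and lies at positive distance from $\Lambda\setminus I_b$, since the closures of the Schottky disks are disjoint. So fix $\chi_b\in C_c^\infty$ with $\chi_b\equiv1$ on a neighbourhood of $\Lambda\cap I_b$ and $\operatorname{supp}\chi_b$ a compact subset of the open interval $I_b$. Then $\tilde g := g\rho\chi_b$, extended by zero, lies in $C^1(\mathbb{R};\mathbb{C})$ with $\|\tilde g\|_{C^1}\ll \|g\|_{C^1(I_b)}$.

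For the phase, the lower bound $|\varphi'|\ge C_0^{-1}$ holds only on $\Lambda\cap I_b$. Continuity together with $\|\varphi\|_{C^2}\le C_0$ then gives $|\varphi'|\ge (2C_0)^{-1}$ on the neighbourhood $\Lambda\cap I_b + (-\eta,\eta)$, with $\eta\asymp C_0^{-2}$. I would therefore shrink $\operatorname{supp}\chi_b$ to lie inside this neighbourhood as well, which keeps $\|\chi_b\|_{C^1}\ll_{C_0}1$. The phase needs to be extended to $\tilde\varphi\in C^2(\mathbb{R};\mathbb{R})$ with $\|\tilde\varphi\|_{C^2}\ll_{C_0}1$ and $\inf_\Lambda|\tilde\varphi'|\gg_{C_0}1$. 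Take $\tilde\varphi=\varphi$ near $\Lambda\cap I_b$, extend it in a $C^2$-bounded way, and on each other Schottky interval $I_{b'}$ add a linear term $x$. Since $\tilde g$ vanishes away from $\operatorname{supp}\chi_b$, the values of $\tilde\varphi$ there do not affect the integral. Cut-off interpolation in the gaps between Schottky intervals keeps the $C^2$ norm bounded.

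\emph{Step 3: conclusion and bookkeeping.} Apply Theorem~\ref{thm:BD_fourier_decay} to the pair $(\tilde\varphi,\tilde g/\|g\|_{C^1(I_b)})$, with a constant $C_{\varphi,g}$ depending only on $C_0$ and $\Gamma$. This gives the bound $|\xi|^{-\beta/2}$ for any admissible $\beta$, and scaling back yields the factor $\|g\|_{C^1(I_b)}$. Since $\beta_{\mathrm{F}}$ is defined as the best exponent, I would either read the statement for every admissible $\beta<\beta_{\mathrm{F}}$ (the loss is absorbed in the $T^\epsilon$ of Proposition~\ref{prop:HS_bound}), or assume the supremum is attained.

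I expect Step 1 to be the main obstacle: it requires pinning down the precise smooth density relating $\mu_\delta$ to the Patterson--Sullivan measure based at $\mathrm{i}$. Everything else is routine cut-off and extension work.
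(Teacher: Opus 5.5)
Your proposal is correct and is essentially the paper's argument. The paper takes $d\mu_\delta=C(1+x^2)^\delta\,d\mu$ from \cite[Lemma~4.7]{JNS20}; your conformality-plus-uniqueness fallback yields exactly this density with no $\varphi_\delta$, since $\mu_\delta$ is the eigenmeasure of $\mathcal L_\delta^*$ rather than the equilibrium state. The paper then cuts off with $\chi_b$ and makes the phase equal to the linear function $\varphi_0(x_b)+x-x_b$ away from $I_b$, so your ``add a linear term'' should read \emph{replace by} a linear function; adding $x$ to an arbitrary extension could cancel its derivative. Finally, it applies Theorem~\ref{thm:BD_fourier_decay} to $\widetilde\varphi$ and $(1+x^2)^\delta\widetilde g$.

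Shrinking $\operatorname{supp}\chi_b$ to where $|\varphi'|\ge(2C_0)^{-1}$ is unnecessary, since the theorem only needs the lower bound on $\Lambda$. Your remark that $\beta_{\mathrm F}$ might not be attained is a fair point the paper passes over. It is harmless, because using any admissible $\beta<\beta_{\mathrm F}$ costs only an $\epsilon$ in the final counting exponent.
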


\begin{proof}
Choose fixed compact intervals $J_b\subset I_b\subset J_b'$ such that $\Lambda\cap I_b\subset\operatorname{int}(J_b)$ and
\[
\Lambda\cap\bigl(J_b'\setminus\operatorname{int}(J_b)\bigr)=\emptyset.
\]
Such intervals exist because the Schottky disks have pairwise disjoint closures. Fix a cut-off function $\chi_b\in C_c^\infty(J_b')$ such that $\chi_b\equiv 1$ on a neighborhood of $J_b$ and $\chi_b\equiv 0$ outside $J_b'$. By standard extension from a fixed interval, there are extensions $\varphi_0\in C^2(\mathbb R;\mathbb R)$ and $g_0\in C^1(\mathbb R;\mathbb C)$ of $\varphi|_{J_b}$ and $g|_{J_b}$ satisfying
\[
\|\varphi_0\|_{C^2(\mathbb R)}\ll C_0,
\qquad
\|g_0\|_{C^1(\mathbb R)}\ll\|g\|_{C^1(I_b)}.
\]
We may assume that $C_0\ge1$. Fix $x_b\in J_b$ and a fixed function $\chi\in C_c^\infty(\mathbb R)$ equal to $1$ on a neighborhood of $\Lambda\cup J_b'$, and set
\[
\widetilde\varphi(x):=\chi(x)\Bigl(\chi_b(x)\varphi_0(x)+(1-\chi_b(x))\bigl(\varphi_0(x_b)+x-x_b\bigr)\Bigr),
\qquad \widetilde g(x):=\chi_b(x)g_0(x).
\]
We have $\widetilde\varphi=\varphi$ and $\widetilde g=g$ on $\Lambda\cap I_b$. We need to verify that these functions satisfy the assumptions of Theorem~\ref{thm:BD_fourier_decay}. The bounds
\[
\|\widetilde\varphi\|_{C^2(\mathbb R)}\ll C_0,
\qquad
\|\widetilde g\|_{C^1(\mathbb R)}\ll\|g\|_{C^1(I_b)},
\]
are immediate from the construction. The derivative lower bound follows from the fact that $\widetilde\varphi'=\varphi'$ on $\Lambda\cap I_b$ and $\widetilde\varphi'=1$ on $\Lambda\setminus I_b$, since this latter set is disjoint from $J_b'$. Hence, we have
\[
\inf_\Lambda|\widetilde\varphi'|\ge C_0^{-1}.
\]
Finally, the measures $\mu_\delta$ and $\mu$ are related as follows: $d\mu_\delta=C(1+x^2)^\delta\,d\mu$; see \cite[Lemma~4.7]{JNS20}. Thus, applying Theorem~\ref{thm:BD_fourier_decay} with exponent $\beta_{\mathrm{F}}$ to $\widetilde\varphi$ and $(1+x^2)^\delta\widetilde g$, and then scaling the amplitude, proves the claim.
\end{proof}

We now apply this localized estimate to our phase functions $\Phi_{\Aa,\Bb}$ defined in \eqref{phase}.

\begin{cor}\label{cor:BD_fourier_decay_cor}
Let $b \in \mathcal{A}$ and let $\Aa, \Bb \in \mathcal{W}^\circ$ be distinct words with $\Aa, \Bb \to b$. Then for every $g \in C^1(I_b; \mathbb{C})$, we have
\begin{equation}\label{eq:BD_fourier_decay_cor}
\left| \int_{\Lambda \cap I_b} e^{i\xi \Phi_{\Aa, \Bb}(x)} g(x) \, d\mu_\delta(x) \right| \le C  ( \mathcal{D}_{\Aa, \Bb} |\xi| )^{-\beta_{\mathrm{F}}/2} \Vert g \Vert_{C^1(I_b)}
\qquad \text{for all } |\xi| > 1,
\end{equation}
where $C > 0$ depends only on $X$.
\end{cor}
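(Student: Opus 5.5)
The plan is to rescale the phase so that Lemma~\ref{lem:localized_BD} applies with constants independent of $\Aa,\Bb$. If $\mathcal{D}_{\Aa,\Bb}|\xi|\le 1$, there is nothing to prove. Indeed, $\mu_\delta$ is a probability measure, so the left side is at most $\|g\|_{C^1(I_b)}$. The right side is then at least $C\|g\|_{C^1(I_b)}$, which suffices once $C\ge1$. So assume $\mathcal{D}_{\Aa,\Bb}|\xi|>1$. Set
\[
\varphi:=\mathcal{D}_{\Aa,\Bb}^{-1}\,\mathrm{Re}\,\Phi_{\Aa,\Bb}\big|_{I_b},\qquad \eta:=\mathcal{D}_{\Aa,\Bb}\,\xi,
\]
so that $e^{i\xi\Phi_{\Aa,\Bb}}=e^{i\eta\varphi}$ on $\Lambda\cap I_b$. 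The phase is real there: for $x\in I_b$ both $\gamma_\Aa'(x)$ and $\gamma_\Bb'(x)$ are positive, so $\Phi_{\Aa,\Bb}(x)=\log\gamma_\Bb'(x)-\log\gamma_\Aa'(x)$. Lemma~\ref{lem:localized_BD} applied with $\eta$ in place of $\xi$ gives exactly the claimed bound $C\,|\eta|^{-\beta_{\mathrm F}/2}\|g\|_{C^1}$, provided $\|\varphi\|_{C^2(I_b)}\le C_0$ and $\inf_{\Lambda\cap I_b}|\varphi'|\ge C_0^{-1}$ for a constant $C_0$ depending only on $X$.

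The derivative lower bound is immediate from Proposition~\ref{prop:phaseDerivative}, since $|\Phi_{\Aa,\Bb}'(x)|\asymp\mathcal{D}_{\Aa,\Bb}$ on all of $I_b$.

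For the upper bounds we use \eqref{firstDer}. Writing $x_\Aa=-d_\Aa/c_\Aa$, it reads
\[
\Phi_{\Aa,\Bb}'(x)=2\Bigl(\frac{1}{x-x_\Aa}-\frac{1}{x-x_\Bb}\Bigr)=\frac{2(x_\Aa-x_\Bb)}{(x-x_\Aa)(x-x_\Bb)}.
\]
Since $|x-x_\Aa|,|x-x_\Bb|\asymp1$ on $I_b$, differentiating once more gives
\[
|\Phi_{\Aa,\Bb}''(x)|\ll|x_\Aa-x_\Bb|\asymp|\Phi_{\Aa,\Bb}'(x)|\asymp\mathcal{D}_{\Aa,\Bb}.
\]
Hence $\|\varphi'\|_{C^1(I_b)}\ll1$.

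The remaining issue is the $C^0$ norm of $\varphi$ itself. That norm could be large, since $\Phi_{\Aa,\Bb}$ is of size $\log(\Upsilon_\Bb/\Upsilon_\Aa)$, but an additive constant in the phase only multiplies the integral by a unimodular factor. So we replace $\varphi$ by $\varphi-\varphi(o_b)$. This function has $C^0$ norm $\ll|I_b|\,\|\varphi'\|_\infty\ll1$, and the absolute value of the integral is unchanged. The constants in Lemma~\ref{lem:localized_BD} are therefore uniform, and $C$ depends only on $X$.

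The main obstacle is this uniformity. One must check that $\Phi''$ is controlled by $\mathcal{D}_{\Aa,\Bb}$ rather than by something larger, and the pole-difference form of \eqref{firstDer}, together with the separation $|x-x_\Aa|\asymp1$, handles this.
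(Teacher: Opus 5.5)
Your proposal is correct and follows the paper's proof: normalize the phase by $\mathcal{D}_{\Aa,\Bb}$ and subtract its value at a base point in $I_b$, get $|\varphi'|\asymp 1$ from Proposition~\ref{prop:phaseDerivative}, bound $\Phi_{\Aa,\Bb}''$ by $|\Phi_{\Aa,\Bb}'|$ using the explicit M\"obius formula, then apply Lemma~\ref{lem:localized_BD} with frequency $\mathcal{D}_{\Aa,\Bb}\xi$. Your pole-difference form of $\Phi_{\Aa,\Bb}'$ is equivalent to the paper's identity $\Phi_{\Aa,\Bb}''=-\Phi_{\Aa,\Bb}'\bigl(\frac{1}{x-x_{\Aa}}+\frac{1}{x-x_{\Bb}}\bigr)$, and you also spell out the trivial case $\mathcal{D}_{\Aa,\Bb}|\xi|\le 1$, which the paper leaves implicit.
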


\begin{proof}
Fix $x_0 \in I_b$ and define for all $x \in I_b$ the real-valued function
\[
\varphi_{\Aa, \Bb}(x) := \frac{\Phi_{\Aa, \Bb}(x) - \Phi_{\Aa, \Bb}(x_0)}{\mathcal{D}_{\Aa, \Bb}}.
\]
Note that
\[
\left|\int_{\Lambda\cap I_b}e^{i\xi\Phi_{\Aa,\Bb}(x)}g(x)\,d\mu_\delta(x)\right| = \left|\int_{\Lambda\cap I_b}e^{i \mathcal{D}_{\Aa, \Bb} \xi\varphi_{\Aa,\Bb}(x)}g(x)\,d\mu_\delta(x)\right|.
\]
If we can show that $\varphi = \varphi_{\Aa, \Bb}$ satisfies the assumptions of Lemma~\ref{lem:localized_BD} with some uniform constant $C_0 >0$, then we can apply that lemma to obtain the desired Fourier decay estimate, as long as $\mathcal D_{\Aa,\Bb}|\xi|>1$. For $\mathcal D_{\Aa,\Bb}|\xi|\le 1$, the bound is trivial.

By Proposition~\ref{prop:phaseDerivative},
\[
|\varphi_{\Aa,\Bb}'|\asymp1 \quad\text{on }I_b.
\]
In particular, 
\[
\inf_{\Lambda\cap I_b}|\varphi_{\Aa, \Bb}'|\gg 1.
\]
Using the mean-value theorem, we also have
\[
|\varphi_{\Aa, \Bb}(x) | \asymp |x - x_0| \ll 1 \quad\text{for all } x \in I_b.
\]
It remains to verify that the second derivative $\varphi_{\Aa,\Bb}'' = \Phi_{\Aa,\Bb}'' / \mathcal{D}_{\Aa, \Bb}$ is uniformly bounded. Differentiating \eqref{eq:log_der} gives
\begin{equation}\label{eq:second_derivative_Phi}
\Phi_{\Aa,\Bb}''(x) =-\Phi_{\Aa,\Bb}'(x)\left(\frac{1}{x-x_{\Aa}}+\frac{1}{x-x_{\Bb}}\right),
\end{equation}
where $x_{\Aa}:=\gamma_{\Aa}^{-1}(\infty)$ and $x_{\Bb}:=\gamma_{\Bb}^{-1}(\infty)$. Since $x_{\Aa}$ and $x_{\Bb}$ lie in Schottky disks distinct from $D_b$, we have $\vert x - x_{\Aa} \vert \asymp 1$ and $\vert x - x_{\Bb} \vert \asymp 1$ for all $x \in I_b$. Using this in \eqref{eq:second_derivative_Phi} yields
\[
|\Phi_{\Aa,\Bb}''(x)|\ll|\Phi_{\Aa,\Bb}'(x)|\quad\text{on }I_b.
\]
Combining these estimates shows that
\[
\|\varphi_{\Aa, \Bb}\|_{C^2(I_b)}\ll 1
\]
as desired.
\end{proof}

We are now ready to complete the proof of Proposition \ref{prop:integral_bound}.

\begin{proof}[Proof of Proposition \ref{prop:integral_bound}]
Fix an orthonormal basis $\{\psi_m\}_{m\in \mathbb{N}}$ of $H^2(\Omega(h))$, and recall that the Bergman kernel of $H^2(\Omega(h))$ can be expressed for all $z, w \in \Omega(h)$ as the absolutely convergent series
\[
B_{\Omega(h)}(z,w) = \sum_{m\in \mathbb{N}} \psi_m(z) \overline{\psi_m(w)}.
\]
By the definition of the refined operators, we have for $z\in\Omega_b(h)$, $b\in\mathcal{A}$,
\[
\mathcal{L}_{\tau', \tau''; s}f(z) = \sum_{\Aa_2\in Y_b(\tau')} \sum_{\Aa_1\in Y_{\mathrm{first}(\Aa_2)}(\tau'')} \gamma_{\Aa_1\Aa_2}'(z)^s f(\gamma_{\Aa_1\Aa_2}(z)),
\]
where
\[
S_b:=\{(\Aa_1,\Aa_2):\Aa_2\in Y_b(\tau'),\ \Aa_1\in Y_{\mathrm{first}(\Aa_2)}(\tau'')\}.
\]
Here, $\mathrm{first}(\Aa_2)$ denotes the first letter of the word $\Aa_2$. Therefore, we have the following expansion for every $x \in \Lambda\cap I_b$:
\begin{equation}
\begin{aligned}
\sum_m|\mathcal L_{\tau',\tau'';s}\psi_m(x)|^2
&=\sum_{(\Aa_1,\Aa_2),(\Bb_1,\Bb_2)\in S_b}
\gamma_{\Aa_1\Aa_2}'(x)^s\overline{\gamma_{\Bb_1\Bb_2}'(x)^s}\\
&\quad\times B_{\Omega(h)}(\gamma_{\Aa_1\Aa_2}(x),\gamma_{\Bb_1\Bb_2}(x)).
\end{aligned}
\end{equation}
Define the function
\[
g_{\Aa,\Bb}(z;s) := \gamma'_\Aa(z)^s \overline{\gamma'_\Bb(z)^s} B_{\Omega(h)}(\gamma_\Aa(z), \gamma_\Bb(z)).
\]
Hence, we can write
\begin{equation}\label{eq:explicit_formula}
\int_{\Lambda}\sum_{m\in \mathbb{N}}|\mathcal{L}_{\tau', \tau''; s} \psi_m(x)|^2 d\mu_\delta = \sum_{b\in\mathcal A} \sum_{(\Aa_1,\Aa_2),(\Bb_1,\Bb_2)\in S_b} \int_{\Lambda\cap I_b} g_{\Aa_1\Aa_2,\Bb_1\Bb_2}(x;s) \, d\mu_\delta(x).
\end{equation}

For sufficiently small scales, all composite branches $\Aa=\Aa_1\Aa_2$ with $(\Aa_1,\Aa_2)\in S_b$ have length at least $N_0$. Hence Lemma~\ref{lem:contractionInH} gives, for $z\in\Omega_b(h)$,
\[
\dist(\gamma_\Aa(z), \partial \Omega(h)) \gg h, 
\]
and similarly for $\gamma_\Bb(z)$. Thus Lemma~\ref{lem:UpperBoundBergman} gives $|B_{\Omega(h)}(\gamma_\Aa(z),\gamma_\Bb(z))|\ll h^{-2}$. Combining this with Lemma~\ref{lem:derivative_bounds_omega} gives
\begin{equation}\label{eq:pointwise_g_bound}
|g_{\Aa,\Bb}(z;s)| \ll h^{-2} |\gamma'_\Aa(z)^s| |\gamma'_\Bb(z)^s| \ll e^{C (h|t| + \vert \sigma \vert)} h^{-2} (\tau'\tau'')^{2\sigma}.
\end{equation}
When $x$ is real, we have $|g_{\Aa,\Bb}(x;s)|=|g_{\Aa,\Bb}(x;\sigma)|,$ in which case we can drop the exponential factor $e^{Ch|t|}.$

Recall that $\tau'' = C_{\mathrm{sep}} h $, where $C_{\mathrm{sep}}$ is the constant from Lemma~\ref{lem:separation}. This lemma, applied with $z_1=\gamma_{\Aa_2}(z)$ and $z_2=\gamma_{\Bb_2}(z)$, gives $\Aa_1\sim_{\mathrm{pre}}\Bb_1$ or otherwise $\gamma_{\Aa_1\Aa_2}(z)$ and $\gamma_{\Bb_1\Bb_2}(z)$ belong to different connected components of $\Omega(h)$. Note that the Bergman kernel $B_{\Omega(h)}(z_1, z_2)$ vanishes if $z_1$ and $z_2$ are in different connected components of $\Omega(h)$. We conclude the sum on the right of \eqref{eq:explicit_formula} reduces to terms where $\Aa_1$ and $\Bb_1$ are prefix-comparable. 

Let $F$ be the finite set from Lemma~\ref{lem:boundedTransitions}. It follows from this lemma that every pair $(\Aa,\Bb) = (\Aa_1\Aa_2,\Bb_1\Bb_2)$ in this sum with $\Aa_1\sim_{\mathrm{pre}}\Bb_1$ belongs to one of the following three classes:
\begin{align*}
Q_b^{(1)} &:= \{ (\Aa_1\Aa_2,\Bb_1\Bb_2): (\Aa_1,\Aa_2),(\Bb_1,\Bb_2)\in S_b,\ \Aa_1\sim_{\mathrm{pre}}\Bb_1,\ \Aa_2\sim_{\mathrm{suf}}\Bb_2\},\\
Q_b^{(2)} &:= \{ (\Cc\Aa_2,\Cc\Bb_2): (\Cc,\Aa_2),(\Cc,\Bb_2)\in S_b,\ \Aa_2\nsim_{\mathrm{suf}}\Bb_2\},\\
Q_b^{(3)} &:= \{ (\Cc\Aa_2,\Cc\mathbf{v}\Bb_2): \mathbf{v}\in F,\ (\Cc,\Aa_2),(\Cc\mathbf{v},\Bb_2)\in S_b,\ \Aa_2\nsim_{\mathrm{suf}}\Bb_2\}\\
&\quad\cup\{ (\Cc\mathbf{v}\Aa_2,\Cc\Bb_2): \mathbf{v}\in F,\ (\Cc\mathbf{v},\Aa_2),(\Cc,\Bb_2)\in S_b,\ \Aa_2\nsim_{\mathrm{suf}}\Bb_2\}.
\end{align*}
By the triangle inequality, we have
\[
\sum_{m\in \mathbb{N}}\int_{\Lambda}|\mathcal{L}_{\tau', \tau''; s} \psi_m|^2 d\mu_\delta\le \Sigma_1 + \Sigma_2 + \Sigma_3,
\]
where
\[
\Sigma_j := \sum_{b\in\mathcal A}\sum_{(\Aa,\Bb)\in Q_b^{(j)}} \left| \int_{\Lambda \cap I_b} g_{\Aa,\Bb}(x;s) d\mu_\delta(x) \right|, \quad j\in\{1,2,3\}.
\]
We will bound each $\Sigma_j$ separately. By Lemma~\ref{lem:boundedTransitions} and Lemma~\ref{lem:YZ}, we have
\[
|Q_b^{(1)}| \ll |S_b| \ll |Y(\tau')|| Y(\tau'')| \ll (\tau'\tau'')^{-\delta}.
\]
Together with the pointwise estimate in \eqref{eq:pointwise_g_bound}, we obtain
\[
\Sigma_1 \ll e^{C|\sigma|} h^{-2}(\tau'\tau'')^{-\delta + 2\sigma}.
\]
For $(\Aa,\Bb)\in Q_b^{(2)}\cup Q_b^{(3)}$, write $g_{\Aa,\Bb}(x;s) = g_{\Aa,\Bb}(x;\sigma) e^{-it \Phi_{\Aa,\Bb}(x)}$. By Proposition \ref{prop:phaseDerivative}, we have $|\Phi_{\Aa,\Bb}'| \asymp \mathcal{D}_{\Aa, \Bb}$ on $I_b$. Hence, applying Corollary \ref{cor:BD_fourier_decay_cor}, we obtain
\begin{equation}\label{eq:BD_bound_applied}
\left| \int_{\Lambda \cap I_b} g_{\Aa,\Bb}(x;\sigma) e^{-it \Phi_{\Aa,\Bb}(x)} d\mu_\delta(x) \right| \ll \|g_{\Aa,\Bb}(\cdot;\sigma)\|_{C^1(I_b)} (\mathcal{D}_{\Aa, \Bb} |t|)^{-\beta_{\mathrm{F}}/2}.
\end{equation}
Before estimating this norm, fix $x_b\in\Lambda\cap I_b$. For every composite branch $\Aa$ occurring here, $\gamma_\Aa(x_b)\in\Lambda$ and the derivative bounds give
\[
|\gamma_\Aa(x)-\gamma_\Aa(x_b)|\ll\tau'\tau''\asymp h\tau',\qquad x\in I_b.
\]
Since $\tau'\to0$, the entire interval $\gamma_\Aa(I_b)$ lies in $\Omega(h/2)$ for sufficiently large $T$ and has distance $\gg h$ from $\partial\Omega(h)$; the same holds for $\Bb$. Thus $g_{\Aa,\Bb}(\cdot;\sigma)$ is well-defined and $C^1$ on $I_b$, and the pointwise bound in \eqref{eq:pointwise_g_bound} holds there. It gives
\[
\Vert g_{\Aa,\Bb}(\cdot;\sigma)\Vert_{C^0(I_b)} \ll e^{C|\sigma|} h^{-2}(\tau'\tau'')^{2\sigma}.
\]
To bound the maximum of the derivative, write
\begin{align*}
\frac{d}{dx}g_{\Aa,\Bb}(x;\sigma)
&=\frac{d}{dx}\left(\gamma'_\Aa(x)^\sigma\gamma'_\Bb(x)^\sigma\right)
B_{\Omega(h)}(\gamma_\Aa(x),\gamma_\Bb(x))\\
&\quad+\gamma'_\Aa(x)^\sigma\gamma'_\Bb(x)^\sigma
\frac{d}{dx}B_{\Omega(h)}(\gamma_\Aa(x),\gamma_\Bb(x)).
\end{align*}
Since $|\gamma_\Aa''(x)/\gamma_\Aa'(x)| \ll 1$ for $x\in I_b$, we get $ \frac{d}{dx} \left( \gamma'_\Aa(x)^\sigma \gamma'_\Bb(x)^\sigma \right) \ll e^{C|\sigma|} (\tau'\tau'')^{2\sigma}$. Moreover, writing
$$
\begin{aligned}
\frac{d}{dx} B_{\Omega(h)}(\gamma_\Aa(x), \gamma_\Bb(x))
&= \partial_z B_{\Omega(h)}(\gamma_\Aa(x), \gamma_\Bb(x)) \cdot \gamma_\Aa'(x)\\
&\quad+\partial_{\overline w} B_{\Omega(h)}(\gamma_\Aa(x), \gamma_\Bb(x)) \cdot \gamma_\Bb'(x),
\end{aligned}
$$
we obtain from Lemma~\ref{lem:UpperBoundBergman},
$$
\left|\frac{d}{dx}B_{\Omega(h)}(\gamma_\Aa(x),\gamma_\Bb(x))\right| \ll h^{-3}\tau'\tau''.
$$
Combining the above estimates, we get $ \sup_{x\in I_b} \left| \frac{d}{dx} g_{\Aa,\Bb}(x;\sigma) \right| \ll e^{C|\sigma|} h^{-2}(\tau'\tau'')^{2\sigma}(1+h^{-1}\tau'\tau'')$. Consequently,
\begin{equation*}
\|g_{\Aa,\Bb}(\cdot;\sigma)\|_{C^1(I_b)} \ll e^{C|\sigma|} h^{-2}(\tau'\tau'')^{2\sigma}\bigl(1+h^{-1}\tau'\tau''\bigr).
\end{equation*}
Since $\tau''\asymp h$ and $\tau'\ll 1$, this simplifies to
\begin{equation*}
\|g_{\Aa,\Bb}(\cdot;\sigma)\|_{C^1(I_b)} \ll e^{C|\sigma|} h^{-2}(\tau'\tau'')^{2\sigma}.
\end{equation*}

Now we estimate the quantity $\mathcal{D}_{\Aa, \Bb} $. Suppose first that $(\Aa,\Bb)=(\Cc\Aa_2,\Cc\Bb_2)\in Q_b^{(2)}$. Recall that
\begin{itemize}
\item $\Cc\in Y(\tau'')$ and $\Aa_2,\Bb_2\in Y(\tau')$,
\item $\Cc\to \Aa_2,\Bb_2$, and
\item $\Aa_2\nsim_{\mathrm{suf}}\Bb_2$.
\end{itemize}
Let $\Dd$ be the largest common suffix of $\Aa_2$ and $\Bb_2$ and write $\Aa_2 = \Ee \Dd$ and $\Bb_2 = \Ff \Dd$. Since neither $\Aa_2$ nor $\Bb_2$ is a suffix of the other, both $\Ee$ and $\Ff$ are non-empty words with distinct ending letters. This gives
\[
\mathsf{red}(\Aa\overline{\Bb})
=\Cc\Ee\overline{\Ff}\overline{\Cc}
=\Cc\,\mathsf{red}(\Aa_2\overline{\Bb_2})\,\overline{\Cc},
\]
and $\Cc\to \mathsf{red}(\Aa_2\overline{\Bb_2})\to\overline{\Cc}.$ Repeated applications of Lemma~\ref{lem:boundsderivatives} now give
\[
\mathcal{D}_{\Aa, \Bb}
= \left( \frac{\Upsilon_{\Aa} \Upsilon_{\Bb}}{\Upsilon_{\mathsf{red}(\Aa \overline{\Bb})}} \right)^{1/2}
\asymp \left( \frac{\Upsilon_{\Aa_2} \Upsilon_{\Bb_2}}{\Upsilon_{\mathsf{red}(\Aa_2 \overline{\Bb_2})}} \right)^{1/2}
\asymp \frac{\tau'}{\Upsilon_{\mathsf{red}(\Aa_2 \overline{\Bb_2})}^{1/2}}.
\]
Thus, \eqref{eq:BD_bound_applied} and the preceding $C^1$ bound imply
\begin{align*}
\left|\int_{\Lambda\cap I_b}g_{\Aa,\Bb}(x;s)\,d\mu_\delta(x)\right|
&\ll e^{C|\sigma|}h^{-2}(\tau'\tau'')^{2\sigma}
\left(\frac{\tau'|t|}{\Upsilon_{\mathsf{red}(\Aa_2\overline{\Bb_2})}^{1/2}}\right)^{-\beta_{\mathrm{F}}/2}\\
&=e^{C|\sigma|}h^{-2}(\tau'\tau'')^{2\sigma}
(\tau')^{-\beta_{\mathrm{F}}/2}|t|^{-\beta_{\mathrm{F}}/2}
\Upsilon_{\mathsf{red}(\Aa_2\overline{\Bb_2})}^{\beta_{\mathrm{F}}/4}.
\end{align*}
For fixed $\Aa_2,\Bb_2$, the common prefix $\Cc$ ranges over a subset of $Y(\tau'')$. Hence, using $|Y(\tau'')|\ll(\tau'')^{-\delta}$, we obtain
\[
\Sigma_2 \ll e^{C |\sigma|} h^{-2} (\tau'\tau'')^{2\sigma} (\tau'')^{-\delta} (\tau')^{-\beta_{\mathrm{F}}/2} |t|^{-\beta_{\mathrm{F}}/2}\sum_{\Aa_2,\Bb_2\in Y(\tau')} \Upsilon_{\mathsf{red}(\Aa_2 \overline{\Bb_2})}^{\beta_{\mathrm{F}}/4}.
\]
To estimate the sum on the right, we apply Lemma~\ref{lem:sum_of_Upsilons} below with $\alpha=\beta_{\mathrm{F}}/4<\delta/2$. We obtain
\begin{equation}\label{eq:sigma_2}
\Sigma_2 \ll e^{C |\sigma|} h^{-2} (\tau'\tau'')^{2\sigma} (\tau'')^{-\delta} (\tau')^{-2\delta} |t|^{-\beta_{\mathrm{F}}/2}.
\end{equation}

Finally, we consider the set $Q_b^{(3)}$. Suppose without loss of generality that $(\Aa,\Bb)$ belongs to the first part of $Q_b^{(3)}$. The second part of $Q_b^{(3)}$ is handled similarly. Then there exists a non-empty word $\mathbf{v}\in F$ such that $(\Aa,\Bb)  = (\Cc\Aa_2,\Cc\mathbf{v}\Bb_2)$. Put $\widetilde{\Bb}_2:=\mathbf{v}\Bb_2$. We claim that $\Aa_2\nsim_{\mathrm{suf}}\widetilde{\Bb}_2$. Suppose not. If $\widetilde{\Bb}_2$ is a suffix of $\Aa_2$, then $\Bb_2$ is a suffix of $\Aa_2$, contradicting $\Aa_2\nsim_{\mathrm{suf}}\Bb_2$. If, on the other hand, $\Aa_2$ is a suffix of $\widetilde{\Bb}_2$, then $\Aa_2\sim_{\mathrm{suf}}\Bb_2$, again a contradiction.

Hence, we can apply the same argument as before to $\Aa_2$ and $\widetilde{\Bb}_2$ instead of $\Bb_2$ to deduce that
\[
\mathcal{D}_{\Cc\Aa_2,\Cc\mathbf{v}\Bb_2}
\asymp_F \frac{\tau'}{\Upsilon_{\mathsf{red}(\Aa_2\overline{\Bb_2}\overline{\mathbf{v}})}^{1/2}}.
\]
For $\mathbf{v}\in F$, let
\[
\mathcal{V}_{\mathbf{v}}:=\{\mathbf{v}\Bb_2:\Bb_2\in Y(\tau'),\ \mathbf{v}\to\Bb_2\}.
\]
The same chain of inequalities as before shows that
\[
\Sigma_3 \ll e^{C |\sigma|} h^{-2} (\tau'\tau'')^{2\sigma} (\tau'')^{-\delta} (\tau')^{-\beta_{\mathrm{F}}/2} |t|^{-\beta_{\mathrm{F}}/2}
\sum_{\mathbf{v}\in F}\sum_{\Aa_2\in Y(\tau'),\,\Bb_2\in\mathcal{V}_{\mathbf{v}}}
\Upsilon_{\mathsf{red}(\Aa_2\overline{\Bb_2})}^{\beta_{\mathrm{F}}/4}.
\]
Since $F$ is finite, for each fixed $\mathbf{v}\in F$, the set $\mathcal{V}_{\mathbf{v}}$ consists of words whose $\Upsilon$-size is comparable to $\tau'$. We can apply Lemma~\ref{lem:sum_of_Upsilons} below to $\mathcal{U} = Y(\tau')$ and $\mathcal{V} = \mathcal{V}_{\mathbf{v}}$ and then sum over $F$ to get
\begin{equation}\label{eq:sigma_3}
\Sigma_3 \ll e^{C |\sigma|} h^{-2} (\tau'\tau'')^{2\sigma} (\tau'')^{-\delta} (\tau')^{-2\delta} |t|^{-\beta_{\mathrm{F}}/2}.
\end{equation}

Combining the estimates for $\Sigma_1$, $\Sigma_2$, and $\Sigma_3$ yields
\begin{equation*}
\sum_{m\in \mathbb{N}}\int_{\Lambda}|\mathcal{L}_{\tau', \tau''; s} \psi_m|^2 d\mu_\delta
\ll e^{C |\sigma|} h^{-2} (\tau'\tau'')^{-\delta + 2\sigma} \left(1 + (\tau')^{-\delta} |t|^{-\beta_{\mathrm{F}}/2} \right).
\end{equation*}
Finally, choosing $\tau' = T^{-\beta_{\mathrm{F}}/(2\delta)}$ and recalling that $T/2 \le |t| \le 2T$, we see that the term in parentheses is $O(1)$, and hence we get the desired bound.
\end{proof}

It remains to prove the following estimate we used in the proof above, namely:

\begin{lemma}\label{lem:sum_of_Upsilons}
Let $C_0\ge1$, and let $\mathcal{U},\mathcal{V}\subset\mathcal{W}$ be finite sets such that for all $\Aa\in\mathcal{U}$ and $\Bb\in\mathcal{V}$,
\[
C_0^{-1}\tau\le\Upsilon_{\Aa}\le C_0\tau,
\qquad
C_0^{-1}\tau\le\Upsilon_{\Bb}\le C_0\tau.
\]
Then, for all $0<\tau\le1$, $\alpha\in\mathbb{R}$, and $\epsilon>0$,
\[
\sum_{\Aa\in\mathcal{U},\,\Bb\in\mathcal{V}}
\Upsilon_{\mathsf{red}(\Aa\overline{\Bb})}^{\alpha}
\ll_{\alpha,\epsilon,C_0} \tau^{-\delta}+\tau^{-2\delta+2\alpha-\epsilon}.
\]
If $\alpha<\delta/2$, we can remove the $\epsilon$-loss:
\[
\sum_{\Aa\in\mathcal{U},\,\Bb\in\mathcal{V}}
\Upsilon_{\mathsf{red}(\Aa\overline{\Bb})}^{\alpha}
\ll_{\alpha,C_0}\tau^{-2\delta+2\alpha}.
\]
\end{lemma}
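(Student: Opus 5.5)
We will organize the sum according to the maximal common suffix of $\Aa$ and $\Bb$. Fix $\Aa\in\mathcal U$, $\Bb\in\mathcal V$, and let $\Dd$ be their largest common suffix. Write $\Aa=\Ee\Dd$ and $\Bb=\Ff\Dd$. Then $\mathsf{red}(\Aa\overline{\Bb})=\Ee\overline{\Ff}$.

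If $\Ee$ and $\Ff$ are both non-empty, they have distinct last letters, so $\Ee\to\overline{\Ff}$. Lemma~\ref{lem:boundsderivatives}(iii)--(iv) then gives
\[
\Upsilon_{\mathsf{red}(\Aa\overline{\Bb})}\asymp\Upsilon_\Ee\Upsilon_\Ff.
\]
In the degenerate case where one of $\Ee,\Ff$ is empty, the same bound holds with the convention $\Upsilon_\emptyset=1$. Here we use that $\Upsilon_{\Ee\Dd}\asymp\Upsilon_\Ee\Upsilon_\Dd$ (Lemma~\ref{lem:boundsderivatives}(iv)), together with $\Upsilon_\Aa\asymp\Upsilon_\Bb\asymp\tau$. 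These give $\Upsilon_\Ee\asymp\Upsilon_\Ff\asymp\tau/\Upsilon_\Dd$. Hence every term satisfies
\[
\Upsilon_{\mathsf{red}(\Aa\overline{\Bb})}^\alpha\asymp_{\alpha,C_0}(\tau/\Upsilon_\Dd)^{2\alpha}.
\]
The implied constants may depend on $\alpha$ through $C^{|\alpha|}$, which is harmless.

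It remains to count pairs by the scale $\rho:=\Upsilon_\Dd\in[\tau,1]$, taken up to constants. We decompose dyadically, $\rho\asymp 2^{-k}$ with $0\le k\lesssim\log_2(\tau^{-1})$. The key count is that the number of pairs $(\Aa,\Bb)\in\mathcal U\times\mathcal V$ whose common suffix $\Dd$ has $\Upsilon_\Dd\asymp\rho$ is
\[
\ll\rho^{-\delta}(\tau/\rho)^{-2\delta}=\tau^{-2\delta}\rho^{\delta}.
\]
To justify it, first choose $\Dd$. By the reversal symmetry (Lemma~\ref{lem:boundsderivatives}(iii)), suffixes correspond to prefixes of mirrored words. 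A counting argument analogous to Lemma~\ref{lem:YZ}(ii) shows there are $\ll\rho^{-\delta}$ reduced words $\Dd$ with $\Upsilon_\Dd\asymp\rho$. The argument: the intervals $I_{\overline\Dd}$ have length $\asymp\rho$, any point of $\Lambda$ lies in boundedly many of them, and $\mu_\delta(I_{\overline\Dd})\gg\rho^\delta$ by Ahlfors regularity. Next, for fixed $\Dd$, count the words $\Ee$ with $\Upsilon_\Ee\asymp\tau/\rho$. The same argument gives $\ll(\tau/\rho)^{-\delta}$ such words, and likewise for $\Ff$.

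Summing over scales gives
\[
\sum_{\Aa,\Bb}\Upsilon_{\mathsf{red}(\Aa\overline{\Bb})}^\alpha\ll\sum_{k}\tau^{-2\delta}\rho_k^{\delta}(\tau/\rho_k)^{2\alpha}=\tau^{-2\delta+2\alpha}\sum_k\rho_k^{\delta-2\alpha},\qquad \rho_k=2^{-k}\in[\tau,1].
\]
This is a geometric series, and there are three cases.
\begin{itemize}
\item If $\alpha<\delta/2$, the series is dominated by $\rho\asymp1$. It gives $\ll_\alpha\tau^{-2\delta+2\alpha}$, with no $\epsilon$-loss, as claimed.
\item If $\alpha>\delta/2$, it is dominated by $\rho\asymp\tau$. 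It gives $\tau^{-2\delta+2\alpha}\tau^{\delta-2\alpha}=\tau^{-\delta}$.
\item If $\alpha=\delta/2$, there are $O(\log\tau^{-1})$ equal terms, giving $\tau^{-\delta}\log(\tau^{-1})\ll_\epsilon\tau^{-\delta-\epsilon}=\tau^{-2\delta+2\alpha-\epsilon}$.
\end{itemize}
In all cases the total is $\ll\tau^{-\delta}+\tau^{-2\delta+2\alpha-\epsilon}$. The case of negative $\alpha$ falls under $\alpha<\delta/2$ and is covered.

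I expect the main obstacle to be the counting claim: that there are $\ll r^{-\delta}$ reduced words $\Ee$ with $\Upsilon_\Ee\asymp r$, uniformly for $r\in[\tau,1]$ and with constants allowed to depend on $C_0$. The difficulty is that the words in question need not form a partition. To handle this, I would use that words with $\Upsilon\asymp r$ have boundedly many prefix-comparable companions at that scale (as in Lemma~\ref{lem:boundedTransitions}). It follows that the intervals $I_\Ee\cap\Lambda$ have bounded overlap. Ahlfors regularity, $\mu_\delta(I_\Ee)\gg r^\delta$, then bounds their number by $\ll r^{-\delta}$, since $\mu_\delta$ is a probability measure.
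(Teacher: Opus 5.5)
Your argument is correct, and its skeleton is the paper's. You use the same decomposition $\Aa=\Ee\Dd$, $\Bb=\Ff\Dd$ by maximal common suffix, the same comparison $\Upsilon_{\mathsf{red}(\Aa\overline{\Bb})}\asymp\Upsilon_\Ee\Upsilon_\Ff\asymp(\tau/\Upsilon_\Dd)^2$, and the same geometric series $\tau^{-2\delta+2\alpha}\sum\Upsilon_\Dd^{\delta-2\alpha}$. The paper packages that series as \eqref{eq:weighted_Upsilon_sum} with $\kappa=2\delta-2\alpha$, which is loss-free exactly when $\kappa>\delta$, i.e.\ $\alpha<\delta/2$.

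You genuinely differ in the counting input. The paper imports $\#\{\Aa:\Upsilon_\Aa>\tau\}\ll\tau^{-\delta}$ from outside. It uses the Calder\'on--Magee comparison $\|\gamma_\Aa\|\asymp\Upsilon_\Aa^{-1/2}$ and Patterson's orbit count $\#\{\gamma:d(\mathrm{i},\gamma(\mathrm{i}))<R\}\ll e^{\delta R}$. You instead prove the shell count $\#\{\Ee:\Upsilon_\Ee\asymp r\}\ll r^{-\delta}$ by a packing argument, using only tools already in the paper:
\begin{enumerate}[(i)]
\item disks of non-prefix-comparable words are disjoint;
\item the argument of Lemma~\ref{lem:boundedTransitions}, which uses only $\Upsilon\asymp r$ and not membership in $Y(\tau)$, bounds the overlap of the $I_\Ee$ at a fixed scale;
\item Ahlfors regularity bounds each $\mu_\delta(I_\Ee)$ from below.
\end{enumerate}
Your route is self-contained and dynamical rather than lattice-theoretic. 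The paper's is shorter but leans on two citations.

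One detail you should make explicit. The bound $\mu_\delta(I_\Ee)\gg r^\delta$ does not follow merely from $|I_\Ee|\asymp r$ and $I_\Ee\cap\Lambda\neq\emptyset$, because Ahlfors regularity concerns intervals centered on $\Lambda$. It does hold, for the following reason. The set $\Lambda\cap I_{e_n}$ lies a fixed distance inside $I_{e_n}$, since it is contained in the compactly included subintervals $\overline{I_{e_n b}}$. By bounded distortion, $I_\Ee=\gamma_{\Ee'}(I_{e_n})$ therefore contains an interval of radius $\gg r$ around a point of $\Lambda$. Alternatively, applying $\mathcal L_\delta^*\mu_\delta=\mu_\delta$ to $\mathbf 1_{I_\Ee}$ gives $\mu_\delta(I_\Ee)=\int_{I_{e_n}}(\gamma_{\Ee'}')^\delta\,d\mu_\delta\asymp\Upsilon_\Ee^\delta$ directly.
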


\begin{proof}
By \cite[Lemma~10]{CalMag25}, the Frobenius norm satisfies $\|\gamma_\Aa\| \asymp \Upsilon_\Aa^{-1/2}$. Using the identity $\|\gamma\|^2 = 2\cosh(d(\mathrm{i},\gamma(\mathrm{i})))$ and the lattice point estimate $\#\{\gamma\in\Gamma: d(\mathrm{i},\gamma(\mathrm{i}))<R\}\ll e^{\delta R}$ from \cite{Pa88} (see also \cite[Theorem~14.22]{Bor16}), it follows that
\begin{equation}\label{eq:cardsets}
\#\left\{ \Aa \in \mathcal{W} : \Upsilon_{\Aa} > \tau \right\} \ll \tau^{-\delta}.
\end{equation}
Thus, using a dyadic decomposition of the set $\{\Aa \in \mathcal{W} : \Upsilon_{\Aa} > \tau\}$, we get for every $\kappa\in\mathbb{R}$ and $\epsilon>0$,
\begin{align*}
\sum_{\substack{\Aa \in \mathcal{W}\\ \Upsilon_{\Aa}>\tau}} \Upsilon_{\Aa}^{\kappa} 
&\ll O(1) + \sum_{k\ge 0: 2^{-k}>\tau} 2^{-\kappa k} \#\left\{ \Aa \in \mathcal{W} : \Upsilon_{\Aa} > 2^{-k-1} \right\}\\
&\ll O(1) + \sum_{k\ge 0: 2^{-k}>\tau} 2^{(\delta-\kappa) k},
\end{align*}
where $O(1)$ accounts for the finitely many terms with $\Upsilon_{\Aa}>1$. A geometric series summation gives
\begin{equation}\label{eq:weighted_Upsilon_sum}
\sum_{\substack{\Aa \in \mathcal{W}\\ \Upsilon_{\Aa}>\tau}} \Upsilon_{\Aa}^{\kappa} \ll_{\kappa,\epsilon} \tau^{-\epsilon}+\tau^{-\delta+\kappa}.
\end{equation}
If $\kappa>\delta$, the same geometric series converges uniformly in $\tau$, so the left-hand side is $O_\kappa(1)$.

Now we can prove the statement of Lemma~\ref{lem:sum_of_Upsilons}. For $\Aa\in\mathcal{U}$ and $\Bb\in\mathcal{V}$, let $\Cc$ be their maximal common suffix and write $\Aa=\Aa_0\Cc$ and $\Bb=\Bb_0\Cc$. If both words $\Aa_0$ and $\Bb_0$ are non-empty, then by maximality of $\Cc$ they end with different letters. In this case $\Aa_0 \overline{\Bb_0}$ is reduced. The same is trivially true when at least one of $\Aa_0$ and $\Bb_0$ is empty. In any case, $\mathsf{red}(\Aa\overline{\Bb}) = \Aa_0 \overline{\Bb_0}$, and using the convention $\Upsilon_\emptyset=1$, Lemma~\ref{lem:boundsderivatives} gives
\[
\Upsilon_{\mathsf{red}(\Aa\overline{\Bb})} \asymp \Upsilon_{\Aa_0}\Upsilon_{\Bb_0},
\]
and
\[
 C^{-1} \frac{\tau}{\Upsilon_\Cc} \le \Upsilon_{\Aa_0}, \Upsilon_{\Bb_0} \le C \frac{\tau}{\Upsilon_\Cc},
\qquad
\Upsilon_\Cc\ge C^{-1}\tau,
\]
for some constant $C=C(C_0,\Gamma)>0$. Therefore, since the map $(\Aa,\Bb)\mapsto(\Aa_0,\Bb_0,\Cc)$ is injective, we can estimate
\begin{equation}
\sum_{\Aa\in\mathcal{U},\,\Bb\in\mathcal{V}} \Upsilon_{\mathsf{red}(\Aa\overline{\Bb})}^{\alpha} \ll_{\alpha,C_0} \tau^{2\alpha}
\sum_{\substack{\Cc\in\mathcal{W}\\\Upsilon_\Cc\ge C^{-1}\tau}}
\#\left\{(\Aa_0,\Bb_0)\in\mathcal{W}^2:
\Upsilon_{\Aa_0},\Upsilon_{\Bb_0}\ge C^{-1} \frac{\tau}{\Upsilon_\Cc}\right\}
\Upsilon_\Cc^{-2\alpha}.
\end{equation}
Using \eqref{eq:cardsets} and \eqref{eq:weighted_Upsilon_sum}, we obtain
\begin{align*}
\sum_{\Aa\in\mathcal{U},\,\Bb\in\mathcal{V}} \Upsilon_{\mathsf{red}(\Aa\overline{\Bb})}^{\alpha} &\ll_{\alpha,C_0} \tau^{-2\delta+2\alpha}
\sum_{\substack{\Cc\in\mathcal{W}\\\Upsilon_\Cc\ge C^{-1}\tau}}
\Upsilon_\Cc^{2\delta-2\alpha}\\
&\ll_{\alpha,\epsilon,C_0}
\tau^{-2\delta+2\alpha}\left(\tau^{-\epsilon}+\tau^{-\delta+(2\delta-2\alpha)}\right)\\
&=\tau^{-\delta}+\tau^{-2\delta+2\alpha-\epsilon},
\end{align*}
which is the sought-after estimate.
If $\alpha<\delta/2$, then $2\delta-2\alpha>\delta$, and the sum over $\Cc$ in the first line is uniformly bounded by the loss-free weighted estimate above (with a fixed adjustment of the cutoff to include equality). This gives $\ll_{\alpha,C_0}\tau^{-2\delta+2\alpha}$ and proves the loss-free case.
\end{proof}

\subsection{Proof of the HS-norm bound}
We can now give the proof of Proposition~\ref{prop:HS_bound}. By Proposition~\ref{prop:approximation}, for every fixed integer $N\ge1$, all sufficiently small $h,\tau>0$, all $f\in H^2(\Omega(h))$, all $s=\sigma+it$, and any $\epsilon>0$, writing $E=e^{C(h|t|+|\sigma|)}$, we have
\[
\begin{aligned}
\Vert \mathcal L_{\tau,s}f \Vert_{L^2(\Omega(h))}^2
&\ll_{N,\epsilon} E\tau^{-2\delta+2\sigma-\epsilon}h^{2-\delta}\int_\Lambda |f|^2\,d\mu_\delta\\
&\quad+E\tau^{N+2\sigma}h^{-\delta}\|f\|_{L^2(\Omega(h))}^2.
\end{aligned}
\]
Fix an orthonormal basis $\{\psi_m\}$ of $H^2(\Omega(h))$. Applying the estimate to $f=\mathcal L_{\tau',\tau'';s}\psi_m$ and summing over $m$ gives
\[
\begin{aligned}
\|\mathcal L_{\tau,\tau',\tau'';s}\|_{\mathrm{HS}}^2
&\ll_{N,\epsilon} E\tau^{-2\delta+2\sigma-\epsilon}h^{2-\delta}
\sum_m\int_\Lambda|\mathcal L_{\tau',\tau'';s}\psi_m|^2\,d\mu_\delta\\
&\quad+E\tau^{N+2\sigma}h^{-\delta}\|\mathcal L_{\tau',\tau'';s}\|_{\mathrm{HS}}^2.
\end{aligned}
\]
First, we need to estimate the HS-norm on the right-hand side. A calculation similar to that in the proof of Proposition~\ref{prop:integral_bound} gives
\begin{equation}
\Vert \mathcal{L}_{\tau', \tau''; s} \Vert_{\mathrm{HS}}^2
=\sum_{b\in\mathcal A} \sum_{(\Aa_1,\Aa_2),(\Bb_1,\Bb_2)\in S_b} \int_{\Omega_b(h)} g_{\Aa_1\Aa_2,\Bb_1\Bb_2}(z;s)\,\dvol(z).
\end{equation}
Using \eqref{eq:pointwise_g_bound}, Lemma~\ref{lem:YZ}, and Lemma~\ref{lem:vol_bound_omega}, we get
\begin{align*}
\Vert \mathcal{L}_{\tau', \tau''; s} \Vert_{\mathrm{HS}}^2 &\ll E h^{-2} (\tau'\tau'')^{2\sigma} |Y(\tau')|^2|Y(\tau'')|^2 \vol(\Omega(h))\\
&\ll E h^{-\delta} (\tau'\tau'')^{-2\delta + 2\sigma}.
\end{align*}
Now, we set 
$$
\tau'' := C_{\mathrm{sep}} h, \quad \tau' := T^{-\beta_{\mathrm{F}}/(2\delta)}.
$$
Proposition~\ref{prop:integral_bound} gives for all $s = \sigma + it \in \mathbb{C}$ with $T/2 \le |t| \le 2T$, $T$ sufficiently large,
\[
\sum_{m\in \mathbb{N}} \int_{\Lambda} |\mathcal{L}_{\tau', \tau''; s} \psi_m|^2 \, d\mu_\delta \ll e^{C|\sigma|} h^{-2} (\tau'\tau'')^{-\delta + 2\sigma}.
\]
Moreover, we choose
$$
h = \frac{1}{T},
$$
which gives $E\ll e^{C|\sigma|}$, thereby eliminating the exponential blow-up in the variable $t$. Inserting the last two estimates into the bound for $\|\mathcal L_{\tau,\tau',\tau'';s}\|_{\mathrm{HS}}^2$ gives
\begin{align*}
\| \mathcal{L}_{\tau, \tau', \tau''; s}\|_{\mathrm{HS}}^2 &\ll_{N,\epsilon} e^{C |\sigma|} \left( h^{-\delta} \tau^{-2\delta+2\sigma-\epsilon}  (\tau'\tau'')^{-\delta + 2\sigma} + h^{-2\delta} \tau^{N + 2\sigma} ( \tau'\tau'')^{-2\delta + 2\sigma}  \right)\\
&\ll_{N,\epsilon} e^{C |\sigma|} h^{-\delta} (\tau'\tau'')^{-\delta + 2\sigma} \left( \tau^{-2\delta+2\sigma-\epsilon}  + h^{-\delta} \tau^{N + 2\sigma} ( \tau'\tau'')^{-\delta}  \right)\\
 &\ll_{N,\epsilon} e^{C |\sigma|} T^{A(\delta - \sigma) - \frac{\beta_{\mathrm{F}}}{2}} \left( \tau^{-2\delta + 2\sigma-\epsilon}  + T^{A'} \tau^{N + 2\sigma} \right),
\end{align*}
where $A=2+\beta_{\mathrm{F}}/\delta$ and $A'=2\delta+\beta_{\mathrm{F}}/2$. Choose
$$
\tau = c T^{-\frac{A'}{2\delta + N}}\leq c
$$
for a small enough constant $c > 0$, so that $\tau$ is small enough for the above estimates to apply. Since $\sigma>0$, inserting this choice into the previous bound gives
$$
\| \mathcal{L}_{\tau, \tau', \tau''; s}\|_{\mathrm{HS}}^2 \ll_{N,\epsilon} e^{C \sigma} T^{A(\delta - \sigma) - \frac{\beta_{\mathrm{F}}}{2} + \frac{A'(2\delta+\epsilon)}{N+2\delta}}.
$$
Choose a fixed $N$, depending only on $\epsilon$ and $\delta$, large enough that $A'(2\delta+\epsilon)/(N+2\delta)\le\epsilon$. All four parameters depend only on $T$, $\epsilon$, and $\Gamma$, they tend to $0$ as $T\to\infty$, and satisfy $\tau\tau'\tau''\le cC_{\mathrm{sep}}T^{-1-\beta_{\mathrm{F}}/(2\delta)}\le T^{-1}$ for large $T$. This proves Proposition~\ref{prop:HS_bound}.

\normalem
\bibliography{improved}

@article{JN2016,
  author    = {Jakobson, Dmitry and Naud, Fr{\'e}d{\'e}ric},
  title     = {{Resonances and density bounds for convex co-compact congruence subgroups of ${SL}_2(\mathbb{Z})$}},
  journal   = {Israel Journal of Mathematics},
  volume    = {213},
  number    = {1},
  pages     = {443--473},
  year      = {2016},
  publisher = {Springer},
  doi       = {10.1007/s11856-016-1332-7},
  url       = {https://doi.org/10.1007/s11856-016-1332-7}
}

@article{JN2012,
  author    = {Jakobson, Dmitry and Naud, Fr{\'e}d{\'e}ric},
  title     = {{On the Critical Line of Convex Co-Compact Hyperbolic Surfaces}},
  journal   = {Geometric and Functional Analysis},
  volume    = {22},
  number    = {2},
  pages     = {352--368},
  year      = {2012},
  publisher = {Springer},
  doi       = {10.1007/s00039-012-0154-y},
  url       = {https://doi.org/10.1007/s00039-012-0154-y}
}

@article{Remez1936,
  author = {E. J. Remez},
  title = {Sur une propri\'{e}t\'{e} des polyn\^{o}mes de {Tchebycheff}},
  journal = {Comm. Inst. Sci. Kharkov},
  volume = {13},
  pages = {93--95},
  year = {1936}
}

@article{BD18,
  author = {Jean Bourgain and Semyon Dyatlov},
  title = {Spectral gaps without the pressure condition},
  journal = {Annals of Mathematics},
  volume = {187},
  number = {3},
  pages = {825--867},
  year = {2018}
}

@article{Bourgain2010,
  author = {Jean Bourgain},
  title = {The discretized sum-product and projection theorems},
  journal = {J. Anal. Math.},
  fjournal = {Journal d'Analyse Math\'{e}matique},
  volume = {112},
  number = {1},
  pages = {193--236},
  year = {2010},
  doi = {10.1007/s11854-010-0028-x}
}

@misc{LequenSahlsten,
  author = {F\'{e}lix Lequen and Tuomas Sahlsten},
  title = {Quantitative {Fourier} decay for {Patterson--Sullivan} measures of dimension larger than $1/2$},
  year = {2026},
  eprint = {2607.18010},
  archivePrefix = {arXiv},
  primaryClass = {math.DS},
  url = {https://arxiv.org/abs/2607.18010},
  note = {arXiv:2607.18010}
}

@article{DZ16,
  author = {Semyon Dyatlov and Joshua Zahl},
  title = {Spectral gaps, additive energy, and a fractal uncertainty principle},
  journal = {Geom. Funct. Anal.},
  fjournal = {Geometric and Functional Analysis},
  volume = {26},
  number = {5},
  pages = {1011--1094},
  year = {2016}
}

@article{Pa76,
  author = {Samuel J. Patterson},
  title = {The limit set of a {F}uchsian group},
  journal = {Acta Math.},
  fjournal = {Acta Mathematica},
  volume = {136},
  number = {3-4},
  pages = {241--273},
  year = {1976},
  doi = {10.1007/BF02392052}
}

@article{Sul79,
  author = {Dennis P. Sullivan},
  title = {The density at infinity of a discrete group of hyperbolic motions},
  journal = {Publ. Math. Inst. Hautes \'{E}tudes Sci.},
  fjournal = {Publications Mathématiques de l'Institut des Hautes Études Scientifiques},
  volume = {50},
  pages = {171--202},
  year = {1979},
  doi = {10.1007/BF02684773}
}

@article{Pa88,
  author = {Samuel J. Patterson},
  title = {On a lattice-point problem in hyperbolic space and related questions in spectral theory},
  journal = {Ark. Mat.},
  volume = {26},
  pages = {167--172},
  year = {1988}
}

@article{JNS20,
  author = {Dmitry Jakobson and Fr\'{e}d\'{e}ric Naud and Louis Soares},
  title = {Large degree covers and sharp resonances of hyperbolic surfaces},
  journal = {Ann. Inst. Fourier},
  fjournal = {Annales de l'Institut Fourier},
  volume = {70},
  number = {2},
  year = {2020},
  pages = {523--596}
}

@book{Bowen1975,
  author = {Rufus Bowen},
  title = {Equilibrium States and the Ergodic Theory of Anosov Diffeomorphisms},
  series = {Lecture Notes in Mathematics},
  volume = {470},
  publisher = {Springer-Verlag, Berlin},
  year = {1975}
}

@article {Naud2,
    AUTHOR = {Fr\'{e}d\'{e}ric {Naud}},
     TITLE = {Expanding maps on {C}antor sets and analytic continuation of
              zeta functions},
   JOURNAL = {Ann. Sci. \'{E}cole Norm. Sup. (4)},
  FJOURNAL = {Annales Scientifiques de l'\'{E}cole Normale Sup\'{e}rieure. Quatri\`eme
              S\'{e}rie},
    VOLUME = {38},
      YEAR = {2005},
    NUMBER = {1},
     PAGES = {116--153},
      ISSN = {0012-9593},
   MRCLASS = {37C30 (37A45 37B10 37C35 37D35 37D40 37F50)},
  MRNUMBER = {2136484},
MRREVIEWER = {Dieter H. Mayer},
       DOI = {10.1016/j.ansens.2004.11.002},
       URL = {https://doi.org/10.1016/j.ansens.2004.11.002},
}

@article{BJP03,
  author = {David Borthwick and Christopher Judge and Peter A. Perry},
  title = {Determinants of {L}aplacians and isopolar metrics on surfaces of infinite area},
  journal = {Duke Math. J.},
  fjournal = {Duke Mathematical Journal},
  volume = {118},
  pages = {61--102},
  year = {2003}
}

@article{PaPe01,
  author = {Samuel J. Patterson and Peter Perry},
  title = {The divisor of {S}elberg's zeta function for {K}leinian groups},
  journal = {Duke Math. J.},
  fjournal = {Duke Mathematical Journal},
  volume = {106},
  pages = {321--390},
  year = {2001},
  note = {Appendix A by Charles Epstein}
}

@misc{Cun26,
  title         = {Improved fractal Weyl bounds matching improved spectral gaps for hyperbolic surfaces and open quantum maps},
  author        = {Travis Cunningham},
  year          = {2026},
  eprint        = {2602.20392},
  archivePrefix = {arXiv},
  primaryClass  = {math.SP},
  url           = {https://arxiv.org/abs/2602.20392},
  note          = {arXiv:2602.20392}
}

@misc{Soa25,
  title         = {Improved fractal Weyl bounds for convex cocompact hyperbolic surfaces and large resonance-free regions},
  author        = {Louis Soares},
  year          = {2025},
  eprint        = {2301.03023},
  archivePrefix = {arXiv},
  primaryClass  = {math.SP},
  url           = {https://arxiv.org/abs/2301.03023},
  note          = {arXiv:2301.03023}
}

@article{Liverani1995,
  author    = {Carlangelo Liverani},
  title     = {Decay of correlations},
  journal   = {Annals of Mathematics},
  volume    = {142},
  number    = {2},
  pages     = {239--301},
  year      = {1995},
  doi       = {10.2307/2118636},
  publisher = {Princeton University Press},
}

@book{ParryPollicott1990,
  author    = {William Parry and Mark Pollicott},
  title     = {Zeta Functions and the Periodic Orbit Structure of Hyperbolic Dynamics},
  series    = {Astérisque},
  volume    = {187--188},
  year      = {1990},
  publisher = {Société Mathématique de France},
  pages     = {268},
}

@article {BowenSeries1,
  AUTHOR = {Rufus Bowen and Caroline Series},
   TITLE = {Markov maps associated with {F}uchsian groups},
   JOURNAL = {Inst. Hautes \'{E}tudes Sci. Publ. Math.},
  FJOURNAL = {Institut des Hautes \'{E}tudes Scientifiques. Publications Math\'{e}matiques},
  NUMBER = {50},
    YEAR = {1979},
   PAGES = {153--170},
    ISSN = {0073-8301},
   MRCLASS = {58F11 (30C60 30F35)},
  MRNUMBER = {556585},
MRREVIEWER = {Troels J\o rgensen},
     URL = {http://www.numdam.org/item?id=PMIHES_1979__50__153_0},
}

@article {Bowen1,
    AUTHOR = {Rufus Bowen},
     TITLE = {Hausdorff dimension of quasicircles},
   JOURNAL = {Inst. Hautes \'{E}tudes Sci. Publ. Math.},
  FJOURNAL = {Institut des Hautes \'{E}tudes Scientifiques. Publications
              Math\'{e}matiques},
    NUMBER = {50},
      YEAR = {1979},
     PAGES = {11--25},
      ISSN = {0073-8301},
   MRCLASS = {57S05 (30C20 32G15 58F11)},
  MRNUMBER = {556580},
MRREVIEWER = {L. A. Bunimovich},
       URL = {http://www.numdam.org/item?id=PMIHES_1979__50__11_0},
}

@article{CalMag25,
  author  = {Irving Calder\'on and Michael Magee},
  title   = {Explicit spectral gap for Schottky subgroups of $\mathrm{SL}(2,\mathbb{Z})$},
  journal = {J. Eur. Math. Soc.},
  year    = {2025},
  doi     = {10.4171/JEMS/1610},
  url     = {https://doi.org/10.4171/JEMS/1610},
  note    = {Published online}
}

@article{Lu_Sridhar_Zworski,
  author  = {Wen-Te Lu and Srinivas Sridhar and Maciej Zworski},
  title   = {Fractal Weyl Laws for Chaotic Open Systems},
  journal = {Phys. Rev. Lett.},
  volume  = {91},
  number  = {15},
  pages   = {154101},
  year    = {2003},
  month   = {oct},
  doi     = {10.1103/PhysRevLett.91.154101},
  url     = {https://doi.org/10.1103/PhysRevLett.91.154101}
}

@article{DZ20,
   title = {Fractal uncertainty for transfer operators},
   author = {Semyon Dyatlov and Maciej Zworski},
   journal = {Int. Math. Res. Not. IMRN},
   fjournal = {International Mathematics Research Notices},
   year = {2020},
   volume = {2020},
   number = {3},
   pages = {781--812},
   doi = {10.1093/imrn/rny026}
}

@book{Krantz,
   title = {Geometric analysis of the {B}ergman kernel and metric},
   author = {Steven G. Krantz},
   series = {Graduate Texts in Mathematics},
   volume = {268},
   year = {2013},
   publisher = {Springer, New York},
   pages = {xiii+292},
   isbn = {978-1-4614-7923-9},
   doi = {10.1007/978-1-4614-7924-6}
}

@article{Dya19,
   title = {Improved fractal {W}eyl bounds for hyperbolic manifolds},
   author = {Semyon Dyatlov},
   note = {With an appendix by David Borthwick, Semyon Dyatlov, and Tobias Weich},
   journal = {J. Eur. Math. Soc.},
   fjournal = {Journal of the European Mathematical Society},
   year = {2019},
   volume = {21},
   number = {6},
   pages = {1595--1639},
   doi = {10.4171/JEMS/867}
}

@article{BD17,
    author = {Jean Bourgain and Semyon Dyatlov},
    title = {Fourier dimension and spectral gaps for hyperbolic surfaces},
    journal = {Geom. Funct. Anal.},
    fjournal = {Geometric and Functional Analysis},
    volume = {27},
    pages = {744--771},
    number = {4},
    year = {2017},
    doi = {10.1007/s00039-017-0412-0}
}

@book {Simon,
    AUTHOR = {Barry Simon},
     TITLE = {{Trace ideals and their applications}},
    SERIES = {Mathematical Surveys and Monographs},
    VOLUME = {120},
   EDITION = {Second},
 PUBLISHER = {American Mathematical Society, Providence, RI},
      YEAR = {2005},
     PAGES = {viii+150},
      ISBN = {0-8218-3581-5},
   MRCLASS = {47L20 (47A40 47A55 47B10 47B36 47E05 81Q15 81U99)},
  MRNUMBER = {2154153},
MRREVIEWER = {Pavel B. Kurasov},
}

@incollection{Button,
    AUTHOR = {Jack O. Button},
     TITLE = {All {F}uchsian {S}chottky groups are classical {S}chottky
              groups},
 BOOKTITLE = {The {E}pstein birthday schrift},
    SERIES = {Geom. Topol. Monogr.},
    VOLUME = {1},
     PAGES = {117--125},
 PUBLISHER = {Geom. Topol. Publ., Coventry},
      YEAR = {1998},
   MRCLASS = {20H10 (30F35)},
  MRNUMBER = {1668339},
MRREVIEWER = {C. Maclachlan},
       DOI = {10.2140/gtm.1998.1.117},
       URL = {https://doi.org/10.2140/gtm.1998.1.117},
}

@article {Naud14,
    AUTHOR = {Fr\'{e}d\'{e}ric Naud},
     TITLE = {Density and location of resonances for convex cocompact hyperbolic surfaces},
   JOURNAL = {Invent. Math.},
  FJOURNAL = {Inventiones Mathematicae},
    VOLUME = {195},
      YEAR = {2014},
    NUMBER = {3},
     PAGES = {723--750},
      ISSN = {0020-9910},
   MRCLASS = {58J50 (30F30 35P20 35R01)},
  MRNUMBER = {3166217},
MRREVIEWER = {Emil Saucan},
       DOI = {10.1007/s00222-013-0463-2},
       URL = {https://doi.org/10.1007/s00222-013-0463-2}
}

@book{Bor16,
    author = {David Borthwick},
    title = {Spectral theory of infinite-area hyperbolic surfaces},
    series = {Progress in Mathematics},
    volume = {318},
    edition = {2nd},
    pages = {xiii+463},
    year = {2016},
    publisher = {Birkh\"{a}user/Springer, Cham},
    isbn = {978-3-319-33877-4},
    doi = {10.1007/978-3-319-33878-1}
}

@Article{GLZ,
    Author = {Laurent Guillop\'{e} and Kevin Lin and Maciej Zworski},
    Title = {{The Selberg zeta function for convex cocompact Schottky groups}},
    FJournal = {{Communications in Mathematical Physics}},
    Journal = {{Commun. Math. Phys.}},
    ISSN = {0010-3616; 1432-0916/e},
    Volume = {245},
    Number = {1},
    Pages = {149--176},
    Year = {2004},
    _Publisher = {Springer-Verlag, Berlin},
    _Language = {English},
    _DOI = {10.1007/s00220-003-1007-1},
    _MSC2010 = {11M36 58J52},
    _Zbl = {1075.11059}
}

@Article{NM20,
    Author = {Michael Magee and Fr\'{e}d\'{e}ric Naud},
    Title = {Explicit spectral gaps for random covers of {R}iemann surfaces},
    FJournal = {Publications math\'{e}matiques de l'IH\'{E}S},
    Journal = {{Publ. math. IHES}},
    Volume = {132},
    Pages = {137--179},
    Year = {2020}
}
\bibliographystyle{amsplain}

\end{document}